\newtheorem{theorem}{Theorem}[section]
\newtheorem{corollary}[theorem]{Corollary}
\newtheorem{lemma}[theorem]{Lemma}
\theoremstyle{definition}
\theoremstyle{remark}
\numberwithin{equation}{section}
\def\mc{\omega}
\def\cE{{\mathcal E}}
\def\cM{{\mathcal M}}
\def\bR{{\mathbb R}}
\def\rD{\mathrm{D}}
\def\rd{\mathrm{d}}
\def\cC{\mathcal{C}}
\def\cF{\mathcal{F}}
\def\cS{{\mathcal S}}
\newcommand\cScmd[1]{\cS(#1)}
\def\cB{{\mathcal B}}
\newcommand{\solM}[1]{\mathsf{Sol}\!\left(#1\right)}
\def\uu{w_v}
\def\uup{w_v'}
\def\pa{\partial}
\def\rhob{\rho}
\def\ve{{\mathbf e}}
\def\qq{i}
\def\Om{\Omega}
\def\om{\omega}
\def\Oms{\cB_r}
\def\Omrho{\Om_\rho}
\def\ep{\varepsilon}
\def\fm{f_{\max}}
\def\detn{\mathrm{det}^{\frac1n}}
\def\detns{\mathrm{det}^{\frac2n}}
\def\detni{\mathrm{det}^{-\frac1n}}
\def\diam{\mathrm{diam}}
\def\dist{\mathrm{dist}}
\def\inin{\textnormal{\textsf{in}}}
\def\cH{\mathcal{H}}
\def\qin{q_\inin}
\def\Qin{Q_\inin}
\def\Cmc{C_{\mc}}
\def\Tin{\tau}
\def\Tini{\Tin^{-1}}
\def\bTin{\br{\tau}}
\def\bTini{\br{\tau}^{-1}}
\def\xs{{\mathsf x}}
\def\xm{\xs_{\mathsf m}}
\def\ds{\bar d}
\def\Vo{V}
\def\Fo{\Phi}
\def\Cht{{\widehat C}}
\def\wt{\widetilde}
\def\pp{{\mathbf P}}
\def\hc{h_{\textnormal c}}
\newcommand{\br}[1]{\breve{#1}}
\newcommand{\Ompa}[1]{\Om_{\langle #1 \rangle}}
\newcommand\qu[1]{\quad\text{#1}\;\;}
\newcommand\qua[1]{\quad\text{#1}\quad}
\newcommand\qqu[1]{\qquad\text{#1}\quad}
\newcommand\qqua[1]{\qquad\text{#1}\qquad}
\newcommand\mat[1]{\|#1\|}
\newcommand\mt[1]{[#1]}
\def\be{\begin{equation}}
\def\ee{\end{equation}}
\definecolor{darkblue}{rgb}{0.0,0.0,0.85}
\definecolor{darkgreen}{rgb}{0,0.5,0}
\title{Interior Estimates for Monge-Amp\`ere Equation in Terms of Modulus of Continuity}
\author{Bin Cheng}
\address{Department of Mathematics, University of
          Surrey, Guildford, GU2 7XH, United Kingdom}
   \email{b.cheng@surrey.ac.uk}
\author{Thomas O'Neill}
\address{Department of Mathematics, University of
          Surrey, Guildford, GU2 7XH, United Kingdom}
   \email{t.o'neill@surrey.ac.uk}
\date{5 May, 2020}
\keywords{Monge-Amp\`ere equation, interior estimate, modulus of continuity, modulus of convexity}
\subjclass[2010]{35J60}
\begin{document}
\maketitle
\begin{abstract}
We  investigate the Monge-Amp\`ere equation subject to zero boundary value and with a positive right-hand side  function assumed to be continuous or essentially bounded. Interior estimates of the solution's first and second derivatives are obtained in terms of moduli of continuity. We explicate how the estimates depend on various quantities but have them independent of the solution's modulus of {\it convexity}. Our main theorem has many useful consequences. One of them is the {\it nonlinear} dependence between the H\"older seminorms of the solution and of the right-side function, which confirms the results of Figalli, Jhaveri \& Mooney in \cite{AFCM1}. Our technique is in part inspired by Jian \& Wang in \cite{XJW2} which includes using a sequence of so-called sections.
\end{abstract}
\section{\bf Introduction}
In an open and bounded convex set $\Om$ in $\bR^n$ with $n\ge 2$, consider a strictly convex function $v \in \cC^2(\Om) \cap \cC^0(\overline{\Om})$ that satisfies the following Monge-Amp\`ere equation  (MAE)
\begin{align}
     \detn \mathrm{D}^2v &= f \qu{    in    }\Om \label{MA1}, \\
      v &=0 \qu{    on    }\pa \Om \label{MA2} 
 \end{align}
 with $f$ bounded from above and below by finite positive constants. Here $\detn A:=(\det A)^{\frac1n}$.

In the study of second order elliptic PDEs, ranging from the   Laplacian equation $\Delta v=f$ to the fully nonlinear cases such as the MAE, a fundamental question is: can the solution $v$ recover  two derivatives of regularity from the right-hand side function $f$, at least over the interior of the domain? Caffarelli in  \cite{LC5} proved affirmative answer to this question for \eqref{MA1}, \eqref{MA2} in the cases of Lebesgue integrable $f$  and H\"older continuous $f$. These are regularity results without estimates. More than two decades later, Figalli et al. proved in \cite{AFCM1}  interior estimates for MAE in terms of  H\"older norms of  $\rD^2 v$ and $f$. In particular they proved that both upper and lower bounds of the solution's norm depend on the norm of $f$ via {\it nonlinear} power laws as it grows larger. Remark \hyperlink{R11}{1.1} will say more about how our study examines the nature of this nonlinearity.

In this article, we prove estimates in terms of  the modulus of continuity of $f$, denoted by $\om_f$. Our main result covers a wide range of scenarios even those where $f$ is discontinuous.  We show how the moduli of continuity of $\rD v$ and $\rD^2 v$ depend on $\om_f$ but does {\it not} depend on the modulus of {\it convexity} of $v$ itself -- the latter dependence is however required in \cite{XJW2} and \cite{AFCM1}.   Although one could argue that the result \cite[Cor. 2]{LC4} provided some information on what the modulus of convexity in turn depends on, a detailed and integrated study as we conduct in this article is still necessary. In fact, due to the lack of such details in \cite[Thm. 1(ii)]{XJW2}, their H\"older estimate appears to be of linear dependence which is shown to be not the case in \cite{AFCM1}.

The modulus of continuity of $f$ is defined as
\begin{equation*}
    \mc_{f}(q) := 
      \sup_{x, y\in\Om,\,|x-y|\le q} \big\{ |f(x) - f(y)|\big\}.
    \end{equation*} 
 It is non-decreasing for $q\in(0,\infty)$. If $q> \diam(\Om)$, then $\mc_{f}(q)=\mc_{f}\big(\diam(\Om)\big)$.  

Function $f$ is called {\it Dini continuous} if, for some $R>0$,
\[
 \int_0^{R} {\mc_{f}(q)}\,\frac{\rd q}{q} <\infty 
\]  
 which is a condition that is equivalent for any $R>0$. 
Burch in \cite{Burch} studied a large family of linear elliptic PDEs using Dini continuity. See also \cite{Kovats} by Kovats for early work on fully nonlinear elliptic PDE using $\om_f$ and note their assumption (*) is very close to H\"older continuity.
 
The main theorem is as follows where the assumptions on the domain $\Om$ and the lower bound of $f$ can be easily generalised using John's Lemma \ref{lem:J} and affine invariance  \eqref{T:D2} of MAE. Our theorem includes both the result allowing $ \int_0^{R} {\mc_{f}(q)}\,\frac{\rd q}{q} =\infty$ and the result assuming $ \int_0^{R} {\mc_{f}(q)}\,\frac{\rd q}{q} <\infty$. In fact, the proofs for both cases share many common parts and will be presented in a unified way.\medskip
 
 \noindent{\bf Notations}.  Let $\cB_{r}(x)$ denote the open ball of radius $r$ centered at $x$. Define $\cB_r:=\cB_{r}(0)$. Define
 \[
 \Omrho:=\big\{x\in\Om\,|\, \dist(x,\pa\Om)>\rho\big\}.
 \]
As usual, let $f\big|_{\Om_{\rho}}$ denote the restriction of $f$ onto $\Omrho$ so that $\mc_{f|_{\Om_{\rho}}}$ is defined accordingly.

Let $|\cdot|_{\cC^0(\Om)}$ denote the usual supremum norm and $|f|_{\cC^\ell(\Om)}:=|f|_{\cC^0(\Om)}+|\rD f|_{\cC^0(\Om)}+\cdots +|\rD^\ell f|_{\cC^0(\Om)}$. For $\alpha\in(0,1)$, seminorm $|\cdot|_{\cC^{\alpha}(\Om)}$ is defined as the maximal H\"older coefficient of the argument and $|f|_{\cC^{\ell,\alpha}(\Om)} :=|f|_{\cC^\ell(\Om)}+|\rD^\ell f|_{\cC^\alpha(\Om)}$  for integer $\ell\ge0$. 

\begin{theorem}[\bf Main Theorem]\label{mainresult}
There exist positive constants $\Cmc,\beta,\beta_1,\bar C$ that only depend on dimension $n$ so that the following is true. Consider a convex domain $\Om$ satisfying $\cB_1\subset\Om\subset \cB_n$ and a convex function $v \in \cC^2(\Om) \cap \cC^0(\overline{\Om})$ solving MAE \eqref{MA1}, \eqref{MA2} with $1\leq f\leq \fm$ in $\Om$ for  constant $\fm$. Fix $\rho>0$ and constant $\qin$ satisfying
\be\label{choose:first}
\qin\in(0,\rho] \qqua{ and } \mc_{f|_{\Om_{\rho-\qin}}}(\qin)\le \Cmc.
\ee
Let $\Qin$ denote \emph{various}  $(n,\fm)$-dependent positive powers of $\qin^{-1}$ times \emph{various} $(n,\fm,\rho)$-dependent positive constants. For any $x, x'\in \Omrho$, let $\ds :=|x-x'|>0$ and
\[
\cE_{\ds}:= \exp\Big(\beta\int_{\bar C\ds}^{\qin}{\mc_f(q)}\,\frac{\rd q}{q}\Big). 
\]
Then we have
\be\label{mainestimate:1}
\dfrac{\big|\rD v(x)-\rD v (x')\big|}\ds
\le
\begin{cases}    
\Qin \,\Big( \cE_{\ds}+\cE_{\ds}^{\beta_1}\mc_f(\Qin \, \cE_{\ds}\,\ds \,) \Big)&\text{ if  \; } \Qin\,\ds<1,\\
\Qin &\text{ if \; } \Qin\,\ds\ge 1.
\end{cases}
\ee
If further $f$ is Dini  continuous or equivalently $\cE_0<\infty$, then
 \be\label{C2:est}
\sup_{\Omrho} \big|\rD^2 v\big|\le\Qin\,\cE_0,
 \ee
 and 
\be\label{mainestimate}
\begin{aligned}
&\big|\rD^2 v(x)-\rD^2 v (x')\big| \le \\ 
&\quad  \begin{cases}    \Qin\,  \Big(\cE_{0}\,  \displaystyle\int_0^{ \Qin\, \cE_{\ds}\,\ds} {\mc_{f}(q)}\,\frac{\rd q}{q}+
         \big(1+     { \cE_{\ds}^2 }  \int_{ \bar C\ds}^{\qin}{\mc_f(q)}\,\frac{\rd q}{q^2}\,\big)\,\ds^{ }+\cE_{\ds}^{\beta_1}\mc_f\big(\Qin \, \cE_{\ds}\,\ds\,\big)\Big)&\text{ if  \; }  \Qin\,\ds < 1,\\[1mm]
 \Qin\,  \cE_{0} &\text{ if \; } \Qin\,\ds\ge1.
\end{cases}
\end{aligned}
\ee
Finally, the theorem still holds if we let $\mc_{f|_{\Om_{\rho-\qin}}}$ replace every occurrence of $\om_f$ in \eqref{mainestimate:1}\,--\,\eqref{mainestimate}, including those used in defining $\cE_{\ds}$ and $\cE_0$.
\end{theorem}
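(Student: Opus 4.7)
The plan is to iterate Caffarelli's section machinery across a geometric sequence of scales, with affine normalizations whose accumulated distortion is controlled purely by the modulus of continuity of $f$, avoiding any dependence on the modulus of convexity of $v$. First I would set up the section normalization at a point $x_0\in\Omrho$: introduce $S_v(x_0,t):=\{x : v(x)<v(x_0)+\rD v(x_0)\cdot(x-x_0)+t\}$ and apply John's Lemma \ref{lem:J} to produce an affine map $T_t$ with $T_t(S_v(x_0,t))$ comparable to $\cB_1$. In rescaled coordinates the function $\tilde v := t^{-1}\,v\circ T_t^{-1}$ again solves an MAE whose right-hand side $\tilde f$ inherits from $f$ an oscillation on $\cB_1$ bounded by $\mc_f(\diam S_v(x_0,t))$. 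Because $1\le f\le \fm$, section heights and diameters are polynomially related, quantifying the powers of $\qin^{-1}$ hidden inside $\Qin$.

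Next I would freeze the right-hand side on each normalized section: compare $\tilde v$ to the MAE solution $w$ on $T_t(S_v(x_0,t))$ with constant right-hand side $\tilde f(0)$ and the same boundary values. An Alexandrov--type comparison argument controls $|\tilde v-w|$ by $\mc_f(\diam S_v(x_0,t))$, while Caffarelli's interior regularity for constant-coefficient MAE furnishes a quadratic polynomial $p$ with $\detn \rD^2 p = \tilde f(0)$ approximating $w$ on $\cB_{1/2}$. Unwinding the affine map produces a quadratic approximant $p_k$ of $v$ near $x_0$ at each scale $t_k=\lambda^k t_0$, with normalizing affine map $T_k$. The crucial claim is the step-to-step estimate $\|T_{k+1}T_k^{-1}-I\|\le C\,\mc_f(q_k)$ where $q_k:=\diam S_v(x_0,t_k)$. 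Taking products yields $\|T_k\|\le\prod_{j\le k}(1+C\,\mc_f(q_j))\le\exp(C\sum_j \mc_f(q_j))$, and since the $q_j$ shrink geometrically the Riemann sum converts to the integral appearing in $\cE_{\ds}$.

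For $x,x'\in\Omrho$ with $\ds=|x-x'|$, I would truncate the iteration at the first scale $k^*$ with $q_{k^*}\sim \ds$. The gradient estimate then follows from $|\rD p_{k^*}(x)-\rD p_{k^*}(x')|\lesssim \|\rD^2 p_{k^*}\|\,\ds$ with $\|\rD^2 p_{k^*}\|$ controlled by $\|T_{k^*}\|^2\lesssim \cE_{\ds}$-type bounds, plus a $\cE_\ds^{\beta_1}\mc_f(\Qin\cE_\ds\ds)$ remainder from the frozen-coefficient comparison; the case $\Qin\,\ds\ge 1$ degenerates to a direct $\cC^1$-bound. For the Hessian one telescopes $\rD^2 p_{k+1}-\rD^2 p_k$ with increments bounded by $\|T_k\|^2\,\mc_f(q_k)$; under Dini continuity this telescoping series converges absolutely with sum controlled by $\cE_0$, producing \eqref{C2:est} and \eqref{mainestimate}. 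The outer integral $\int_0^{\Qin\cE_\ds\ds}\mc_f(q)/q\,\rd q$ in \eqref{mainestimate} enters through the mismatch between the frozen-coefficient comparison at scale $k^*$ and the approximant at scale $k^*\pm 1$. Replacing $\mc_f$ by $\mc_{f|_{\Om_{\rho-\qin}}}$ in the final clause is automatic since all sections invoked at scales $\le\qin$ remain inside $\Om_{\rho-\qin}$.

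The main obstacle will be the step-to-step distortion estimate $\|T_{k+1}T_k^{-1}-I\|\le C\,\mc_f(q_k)$. In \cite{XJW2} the analogous bound uses a quantitative stability of normalized sections in which the modulus of convexity of $v$ enters explicitly; here that has to be replaced by a stability result for normalized MAE under $L^\infty$-perturbations of $\tilde f$ that depends only on the normalization $\cB_1\subset T_k(S_v(x_0,t_k))\subset\cB_n$ guaranteed by John's lemma at every scale. Pushing this stability to produce exponential (rather than algebraic) accumulation of errors, and verifying that Dini continuity is the sharp threshold for a pointwise $\rD^2 v$-bound, are the two most delicate technical points in the argument.
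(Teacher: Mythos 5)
Your overall strategy --- shrinking sections about a point, affine renormalization, freezing the right-hand side, exponential accumulation of distortions controlled by $\sum_j\om_f(q_j)$ and converted into the Dini-type integral, truncation at the first scale $q_{k^*}\sim\ds$, and telescoping of Hessian approximants under Dini continuity --- is the same architecture as the paper's (\S\ref{sec:sections}--\S\ref{sec:mainproof}). But there is a genuine gap exactly where you flag ``the main obstacle'': the step-to-step estimate $\mat{T_{k+1}T_k^{-1}-I}\le C\,\om_f(q_k)$ is asserted, not proved, and with your choice of $T_k$ as the John-ellipsoid normalization of $S_v(x_0,t_k)$ it is doubtful it can be proved in this linear form: John's lemma pins the normalizing map only up to bounded eccentricity (a factor $n$), so consecutive John maps need not be close to the identity to first order in the oscillation of $f$, and nothing in your sketch replaces the modulus-of-convexity input of \cite{XJW2} by a quantitative mechanism. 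The paper's resolution is to abandon John normalization after the first step and define the $k$-th transform intrinsically, $T_k:=\hc^{-1/2}\sqrt{\rD^2(\cF_{\pp_k}w_k)(0)}$, where $w_k$ is the solution with unit right-hand side and zero boundary data on the current section; then $w_k$ and $w_{k+1}$ are two unit-RHS solutions differing by $O(\delta_k)$ in $L^\infty$ (comparison/maximum principles), uniformly elliptic with interior $\cC^{2,\alpha}$ bounds on a common ball (Pogorelov and Evans--Krylov, Lemma \ref{C4Bound}), and the interior Schauder stability Lemma \ref{lem:ol} applied to their difference yields precisely the linear-in-$\delta_k$ Hessian-at-the-center estimate \eqref{est:D2w1w0}, i.e.\ your distortion bound. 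Without this (or an equivalent) device, the exponential product $\exp\big(C\sum_j\om_f(q_j)\big)$ that drives $\cE_{\ds}$ has no proof behind it.

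A second omission: you treat ``section heights and diameters are polynomially related'' as a consequence of $1\le f\le\fm$, but the statement actually needed --- that at every $x_0\in\Omrho$ there is a section of height bounded below by an explicit $(n,\fm,\rho)$-dependent power of $\qin$ contained in $\cB_{\qin}(x_0)$, with constants independent of the modulus of convexity of $v$ --- is not automatic; the paper proves it by a compactness/contradiction argument resting on Caffarelli's strict convexity (Lemma \ref{lem:strict}), combined with Lemma \ref{lem:sec:size} to control the first normalizing map and hence the powers of $\qin^{-1}$ inside $\Qin$. Relatedly, your telescoping gives convergence of $\rD^2 p_k(x_0)$, but identifying the limit with $\rD^2 v(x_0)$ requires a separate argument (the paper's Lemma \ref{lem:Dini}, which re-enters the perturbation lemma at deeper scales); and the third term of the splitting at two distinct points $x,x'$ (producing the $\cE_{\ds}^{\beta_1}\om_f(\Qin\cE_{\ds}\ds)$ remainder) again needs the stability Lemma \ref{lem:ol} applied to the two frozen solutions attached to $x$ and $x'$ on an overlapping ball. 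These pieces must be supplied before \eqref{mainestimate:1}--\eqref{mainestimate} can be assembled as you describe.
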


More information on constants $\Cmc$, $\beta$ and $\beta_1$ can be found respectively in \eqref{def:Cmc}, \eqref{def:beta} (also \S \ref{ss:beta}) and Lemma \ref{lem:ol}. They are independent of modulus of  convexity of $v$. Technically, Lemma \ref{lem:strict} shows this modulus of convexity depends on $n,\fm,\rho$. This information is ultimately linked to various positive powers of $\qin^{-1}$ that is used to define the $\Qin$ notation. Subscript ``$\inin$'' stands for ``initial'' because $\qin$ determines the size of the first section of the sequence in the iterative proof.

\begin{corollary}\label{Cor:Holder}
Consider a convex domain $\Om$ satisfying $\cB_1\subset\Om\subset \cB_n$ and a convex solution to  \eqref{MA1}, \eqref{MA2} in $\cC^{2,\alpha}(\Om) \cap \cC^0(\overline{\Om})$ for some $\alpha\in(0,1)$. Suppose $1\leq f \leq \fm$ in $\Om$ for  constant $\fm$. 
Fix $\rho>0$. Let $\wt C$ stand for \emph{various} $(n,\fm,\rho)$-dependent positive constants, $\Cht$ for \emph{various} $(n,\fm)$-dependent positive constants and $C$ for \emph{various} $n$-dependent positive constants. Then, with \(
\cH:=|f|_{\cC^{\alpha}(\Om)}
\), we have
\begin{align}
\label{cor:D2}\big|\rD^2 v\big|_{\cC^0(\Omrho)}&\le \wt C \,\Big(\big(C\cH\big)^{\Cht/\alpha}+1\Big),\\
\label{cor:Holder}\big|\rD^2 v\big|_{\cC^\alpha(\Omrho)}&\le \wt C \,\bigg(\dfrac{\big(C \cH\big)^{\Cht/\alpha}}{\alpha(1-\alpha)}+1\bigg).
\end{align} 
\end{corollary}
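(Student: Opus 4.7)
The plan is to apply Theorem~\ref{mainresult} after substituting the H\"older bound $\om_f(q)\le\cH q^{\alpha}$, with a choice of $\qin$ that balances the $\cH$-dependence of $\Qin$ and $\cE_0$. I will take
\[
\qin:=\min\big\{\rho,\,(\Cmc/\cH)^{1/\alpha}\big\},
\]
which satisfies \eqref{choose:first} since $\om_f(\qin)\le\cH\qin^{\alpha}\le\Cmc$. In the ``large-$\cH$'' regime $\cH\ge\Cmc\rho^{-\alpha}$ we then have $\qin^{-1}=(\cH/\Cmc)^{1/\alpha}$, so $\Qin\le\wt C(\cH/\Cmc)^{p/\alpha}$ for the relevant $(n,\fm)$-dependent power $p$ built into $\Qin$; in the ``small-$\cH$'' regime $\qin=\rho$ and $\Qin\le\wt C$. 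The central computations are the two elementary integrals
\[
\int_0^{\qin}\cH q^{\alpha-1}\,\rd q=\frac{\cH\qin^{\alpha}}{\alpha}\le\frac{\Cmc}{\alpha},\qquad\int_{\bar C\ds}^{\qin}\cH q^{\alpha-2}\,\rd q\le\frac{\cH(\bar C\ds)^{\alpha-1}}{1-\alpha},
\]
which produce the $\alpha(1-\alpha)$ denominator visible in \eqref{cor:Holder} and give the uniform bound $\cE_{\ds}\le\cE_0\le\exp(\beta\Cmc/\alpha)$.

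Estimate \eqref{cor:D2} then follows from \eqref{C2:est}: in the large-$\cH$ regime,
\[
\Qin\cE_0\le\wt C\,(\cH/\Cmc)^{p/\alpha}\exp(\beta\Cmc/\alpha)=\wt C\,(C\cH)^{p/\alpha}
\]
with $C:=\exp(\beta\Cmc/p)/\Cmc$ (which is $n$-dependent) and $\Cht:=p$; the small-$\cH$ contribution, controlled by $\wt C\exp(\beta\cH\rho^{\alpha}/\alpha)$, is absorbed into the ``$+1$'' by enlarging $C$ so that $(C\cH)^{\Cht/\alpha}$ dominates $\exp(\beta\Cmc/\alpha)$ once $\cH$ exceeds an $(n,\fm,\rho)$-threshold, while below that threshold $|\rD^2 v|$ is bounded directly by $\wt C$. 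For \eqref{cor:Holder} I substitute $\om_f(q)\le\cH q^{\alpha}$ termwise in \eqref{mainestimate}. In the regime $\Qin\ds<1$, each of the four terms on the right produces a factor $\ds^{\alpha}$ (using $\ds\le\ds^{\alpha}$ on the bare $\ds$ term, valid since $\ds\le 1/\Qin\le 1$ after arranging $\Qin\ge 1$); the remaining prefactors built from $\cE_0$, $\cE_{\ds}$, $\cE_{\ds}^{\beta_1}$, $\Qin$ and $1/(\alpha(1-\alpha))$ are all dominated by $\wt C(C\cH)^{\Cht/\alpha}/(\alpha(1-\alpha))$ for a sufficiently large $\Cht$. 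The case $\Qin\ds\ge 1$ is handled analogously using $\ds^{-\alpha}\le\Qin^{\alpha}$ to redistribute the missing power onto $\Qin^{1+\alpha}\cE_0$, which again lies in the same form.

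The main obstacle is the exponent bookkeeping: verifying that the various products of $\cE$'s (with powers $1$, $2$ and $\beta_1$) and of $\Qin$'s arising across all four terms of \eqref{mainestimate} can be simultaneously dominated by a single expression $\wt C(C\cH)^{\Cht/\alpha}$ with an $(n,\fm)$-dependent $\Cht$, and that the small-$\cH$ sub-case is harmlessly absorbed into the ``$+1$'' via an $(n,\fm,\rho)$-dependent enlargement of $\wt C$. These steps are routine but delicate, requiring one to choose $C$ and $\Cht$ large enough to cover every worst-case combination of the exponents appearing in \eqref{mainestimate}.
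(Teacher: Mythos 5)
Your proposal is correct and follows essentially the same route as the paper: the same choice $\qin=\min\{\rho,(\Cmc/\cH)^{1/\alpha}\}$, the same uniform bounds on $\cE_{\ds},\cE_0$ and the power-law bound on $\Qin$, and the same termwise substitution of $\om_f(q)\le\cH q^{\alpha}$ into \eqref{C2:est} and \eqref{mainestimate} with the two elementary integrals producing the $\alpha(1-\alpha)$ denominators and the two cases $\Qin\ds<1$, $\Qin\ds\ge1$ handled identically. The only difference is that you spell out the constant-absorption bookkeeping (e.g.\ folding $\exp(\beta\Cmc/\alpha)$ into $(C\cH)^{\Cht/\alpha}$) that the paper leaves implicit in ``in view of \eqref{cE:est} and \eqref{Qin:est}''.
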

 \noindent{\bf Remark.} Thus, \eqref{cor:Holder} confirms and consolidates Theorems 1 and 2 of \cite{AFCM1}, at least  for $\alpha$ away from 0 and 1; it is slightly stronger for $\alpha\to0$ and slightly weaker for $\alpha\to1$. In view of their Theorem 3 regarding the {\it lower bound} of $\big|\rD^2 v\big|_{\cC^\alpha(\Omrho)}$, we see the sharpness of the power law \eqref{cor:Holder}. In fact, a relatively simple examination of their \S 4.5 also reveals the sharpness of power law \eqref{cor:D2}. \medskip
 
 \noindent{\bf Remark.}  Although their Theorem 3 does not address boundary conditions, a small change to their proof can indeed make \eqref{MA2} satisfied in a domain of bounded eccentricity: while they defined in \S 4.4 a family of examples $\{u_r\}_{r\to0}$  in domain $\cB_1$, we can instead define them in a fixed section of $u$ which is a convex function that they constructed to be independent of $r$. Also note by their Remark 4.1, their examples  have fixed (positive) upper and lower bounds of $f$.
\begin{proof}
Since $\om_f(q)\le \cH q^\alpha  $, we choose $\qin=\min\big\{(\Cmc/\cH)^{1/\alpha},\,\rho\big\}$ to have  \eqref{choose:first} satisfied.
Then, 
\be\label{cE:est}
\cE_{\ds}\le \cE_0\le \exp\big(\beta\alpha^{-1}\min\{\Cmc,\cH\rho^\alpha\}\big).
\ee 
Also, by the definition of $\Qin$ in Theorem \ref{mainresult}, 
\be\label{Qin:est}
\Qin\le \wt C\max\big\{(\cH/\Cmc)^{\Cht/\alpha},\,\rho^{-\Cht}\big\}.
\ee
Then, by  \eqref{C2:est}, we prove \eqref{cor:D2}.
 
Next, for any $x,x'\in\Omrho$, we apply \eqref{mainestimate}  to have
\[
\frac{\big|\rD^2 v(x)-\rD^2 v (x')\big|}{\ds^{\,\alpha}}\le
\begin{cases}
\Qin\,\cH\,\Big(\cE_{0}^{1+\alpha}/\alpha+\cE_{\ds}^2/(1-\alpha)+\cE_{\ds}^{\beta_1+\alpha}\Big)+\Qin&\text{ if  \; }  \Qin\,\ds < 1,\\[1mm]
 \Qin\,  \cE_{0} /\ds^{\,\alpha}&\text{ if \; } \Qin\,\ds\ge1.
\end{cases}
\]
In view of \eqref{cE:est} and \eqref{Qin:est}, 
 we  prove \eqref{cor:Holder}.
\end{proof} 

\noindent\hypertarget{R11}{{\bf Remark 1.1.}}
This proof reveals the intimate link of $\qin,\Qin$ to the nonlinear dependence of H\"older estimates.  First, \eqref{cE:est} implies a uniform upper bound on $\cE_{\ds}$ as $\cH$ grows larger. We will see in \S \ref{sec:sections} and \S \ref{sec:iterativeproof}  that such uniformity indicates the Hessian matrix $\rD^2v$ is under good control in neighbourhoods of size below $\qin$; in the words of \cite[p. 3811]{AFCM1}, this is where ``linearity kicks in''. At large scales, nonlinear effect dominates. Take the above proof for example: as $\cH$ grows, $\qin$ shrinks and so does the size of a ``linear neighbourhood"; meanwhile, \eqref{Qin:est} on $\Qin$ grows nonlinearly in $\cH$.

We now introduce generalised (weak) solutions named after Alexandrov.  The definition is based on the notion of Monge-Amp\`ere measure: for any convex function $u(x)$ defined on $\Om\subset \bR^n$,  its sub-differential $\pa u$ maps any Borel set  $ \Om'\subset\Om$ to a measurable set $\pa u(\Om')$ whose Lebesgue measure is called the Monge-Amp\`ere measure of $\Om'$ associated to $u$. Notation ${\cM}u(\Om'):=\big|\pa u(\Om')\big|$ is used. If $u\in \cC^2(\Om)$, apparently ${\cM}u$ conincides with $\det \mathrm D^2u(x)\mathrm dx$.

\begin{corollary}\label{cor:1:3}
Let $\Cmc,\beta,\beta_1$ denote the same constants  as in Theorem \ref{mainresult}. 
For a convex domain $\Om$ satisfying $\cB_1\subset\Om\subset \cB_n$ and  $f\in L^\infty(\Om)$ satisfying $f(x)\ge1$ a.e. in $\Om$,
define
\[
   \mc_{f}(q) := 
    \textnormal{ess\,sup}_{x\in\Om} \Big\{ \textnormal{ess\,sup}_{y\in\Om\atop|x-y|<q} \big|f(x) - f(y)\big|\Big\}.
 \]
  If $\om_f(0+)<\min\big\{\Cmc,1/\beta_2\big\}$ for $\beta_2:=\beta\max\{1,\beta_1\}$, then there exists a unique convex, generalised (weak) solution {\it a la} Alexandrov to the MAE
\be\label{cM:v}
\cM v=f^n\rd x\qqu{with}v\big|_{\pa\Om}=0,
\ee
with regularity $v\in \cC^{1,1-\gamma}_{\textnormal{loc}}({\Om})\cap\cC^0(\overline\Om)$ for any constant $\gamma$ satisfying
\be\label{cond:g}
\beta_2\,\om_f(0+)<\gamma <1.
\ee
The equality $\gamma=\beta_2\,\om_f(0+)$ can be attained if further $\int_{0}^{1}\left(\mc_f(q)-\om_f(0+)\right)\,\frac{\rd q}{q}<\infty.$ 
Finally, there exists an $(n,|f|_{L^\infty(\Om)},\rho,\om_f,\gamma)$-dependent upper bound for $|\rD v|_{\cC^{1-\gamma}(\Omrho)}$.
\end{corollary}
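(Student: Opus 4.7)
The plan is to reduce Corollary \ref{cor:1:3} to the smooth-coefficient Theorem \ref{mainresult} via mollification, derive a uniform $\cC^{1,1-\gamma}_{\textnormal{loc}}$ estimate, and pass to the limit using the weak continuity of the Monge--Amp\`ere operator. First, after a mild extension (e.g., by the constant $1$) so that convolution is defined near $\pa\Om$, I would take a smooth mollification $f_\ep$. By Jensen's inequality $1\le f_\ep\le |f|_{L^\infty(\Om)}$ and $\om_{f_\ep}(q)\le \om_f(q)$ pointwise. Classical MAE theory then yields a strictly convex $v_\ep\in\cC^{2,\alpha}(\Om)\cap\cC^0(\overline\Om)$ solving \eqref{MA1}--\eqref{MA2} with right-hand side $f_\ep$.

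Next, fix $\gamma\in(\beta_2\,\om_f(0+),1)$. Since $\om_f(0+)<\Cmc$ and $\om_f(0+)<\gamma/\beta_2$, I can choose $\qin>0$ so small that $\om_f(\qin)$ lies below both $\Cmc$ and $\gamma/\beta_2$. Then $\om_{f_\ep}(\qin)\le\om_f(\qin)\le\Cmc$, so the hypothesis \eqref{choose:first} holds uniformly in $\ep$ and the associated $\Qin$ is $\ep$-independent. Monotonicity of $\om_f$ gives
\[
\int_{\bar C\ds}^{\qin}\om_f(q)\,\frac{\rd q}{q}\le \om_f(\qin)\,\log\frac{\qin}{\bar C\ds}, \qquad \cE_\ds\le C\,\ds^{-\beta\,\om_f(\qin)},
\]
where $C$ depends on $n,|f|_{L^\infty(\Om)},\rho,\om_f$. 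Because $\beta\,\om_f(\qin)<1$, one has $\Qin\,\cE_\ds\,\ds\to 0$ as $\ds\to0$, so for small $\ds$ the residual factor $\om_f(\Qin\,\cE_\ds\,\ds)$ is bounded by $\om_f(\qin)$. Substituting into \eqref{mainestimate:1} and using $\beta_2=\beta\max\{1,\beta_1\}$ yields
\[
|\rD v_\ep(x)-\rD v_\ep(x')|\le C\,\ds^{\,1-\beta_2\,\om_f(\qin)}\le C\,\ds^{\,1-\gamma}\qqua{on}\Omrho,
\]
uniformly in $\ep$, with $C$ depending only on $n,|f|_{L^\infty(\Om)},\rho,\om_f,\gamma$.

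Combined with a uniform $\cC^0(\overline\Om)$ bound from the Alexandrov--Bakelman--Pucci maximum principle and the standard boundary modulus of continuity for convex solutions with bounded Monge--Amp\`ere measure, Arzel\`a--Ascoli and a diagonal extraction provide a subsequence $v_{\ep_k}\to v$ locally in $\cC^1(\Om)$ and uniformly on $\overline\Om$, with $v$ convex, $v|_{\pa\Om}=0$, and $v$ inheriting the $\cC^{1,1-\gamma}_{\textnormal{loc}}$ bound. Weak continuity of $\cM$ under locally uniform convergence of convex functions, together with $f_{\ep_k}^n\,\rd x\rightharpoonup f^n\,\rd x$, then forces $\cM v=f^n\,\rd x$ in the Alexandrov sense; uniqueness follows from the standard comparison principle for Alexandrov solutions with matching boundary data. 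For the endpoint $\gamma=\beta_2\,\om_f(0+)$, I would split $\om_f(q)=\om_f(0+)+(\om_f(q)-\om_f(0+))$ in the integral defining $\cE_\ds$: the first piece contributes exactly $\om_f(0+)\log(\qin/\bar C\ds)$, and the added hypothesis $\int_0^1(\om_f(q)-\om_f(0+))\,\rd q/q<\infty$ makes the second piece uniformly bounded, so $\cE_\ds\le C\,\ds^{-\beta\,\om_f(0+)}$ and the previous argument closes at the endpoint.

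The main technical obstacle I anticipate is the uniform estimate in paragraph two: verifying that on the small-$\ds$ branch of \eqref{mainestimate:1} every term, especially the mixed factor $\cE_\ds^{\beta_1}\om_f(\Qin\,\cE_\ds\,\ds)$, is absorbed by the target rate $\ds^{-\gamma}$. This is precisely what dictates $\beta_2=\beta\max\{1,\beta_1\}$ in the condition \eqref{cond:g}, and it is where the bookkeeping of which powers of $\cE_\ds$ appear, and how they interact with the $\Qin$-prefactors whose dependence on $\qin^{-1}$ enters the final constant, must be done with care.
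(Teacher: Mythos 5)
Your overall route coincides with the paper's: extend $f$ by the constant $1$, mollify, solve the smooth problems, apply Theorem \ref{mainresult} with an $\ep$-independent $\qin$ chosen from the smallness of $\om_f(0+)$, absorb $\cE_{\ds}$ into $\ds^{-\gamma}$ to get a gradient H\"older bound on $\Omrho$ uniform in the mollification parameter, and pass to the limit via Arzel\`a--Ascoli together with weak-* stability of the Monge--Amp\`ere measure and uniqueness of the Alexandrov solution; your endpoint treatment, splitting $\om_f(q)=\om_f(0+)+(\om_f(q)-\om_f(0+))$, is also the paper's device.

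One step fails as written, though: the claim that $\om_{f_\ep}(q)\le\om_f(q)$ pointwise. After extension by $1$, the mollified function can oscillate by as much as $\fm-1$ across a layer of width comparable to the mollification radius just inside $\pa\Om$ (for instance when $f$ is close to $\fm$ near part of the boundary), so the modulus of $f_\ep$ over all of $\Om$ is not controlled by $\om_f$, uniformly in $\ep$. Since $\cE_{\ds}$ and the explicit $\om_f$-factors in \eqref{mainestimate:1} are by default built from the global modulus of the right-hand side, your uniform exponent would degrade to roughly $1-\beta_2(\fm-1)$, useless for large $\fm$; note the hypothesis \eqref{choose:first} itself is unaffected, as it only involves the restriction to $\Om_{\rho-\qin}$. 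The repair is precisely the final statement of Theorem \ref{mainresult}: replace every occurrence of $\om_f$ in the estimates (including inside $\cE_{\ds}$) by $\om_{f|_{\Om_{\rho-\qin}}}$, observe that $\om_{f_\ep|_{\Om_{\rho/2}}}(q)\le\om_f(q)$ once the mollification radius is below $\rho/2$, and take $\qin\le\rho/2$ so that $\Om_{\rho-\qin}\subset\Om_{\rho/2}$. With this correction -- which is exactly how the paper argues -- the remainder of your argument (uniform estimate, compactness, identification of the limit measure, boundary values, endpoint case) goes through.
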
 

\noindent{\bf Remark.}
This result may seem weaker than that of \cite{C:C1alpha} which does not require the smallness of $\mc_f(0+)$. However, we provide explicit dependence of the H\"older exponent $(1-\gamma)$ on $\mc_f$ and shows that $(1-\gamma)$ can be approaching 1 if the size of discontinuities in $f$, measured by $\mc_f(0+)$, approaches 0. Another route to making $(1-\gamma)$ approach $1$ is to show $v$ is in Sobolev space $W^{2,p}$ for arbitrarily large but finite $p$ (\cite{LC5}).  

The proof is postponed to Appendix \ref{app:cor:1:3}. The same mollification and compactness argument there can be used to prove many more existence and regularity results as consequences of Theorem \ref{mainresult}. We only mention two examples that involve logarithmic refinement of continuity function classes. The sketch proofs of them are also in Appendix \ref{app:cor:1:3}.

{\bf Example 1}:  under the same setup as Theorem \ref{mainresult}, if
\[
\sigma_0:=\beta\,\limsup_{q\to0+}\mc_f(q)\,|\ln q|<\infty,
\]
then for $\sigma>\max\{\sigma_0,\,\sigma_0\beta_1-1\}$,  there exists an $(n,\fm,\rho,\om_f,\sigma)$-dependent upper bound for $\dfrac{\big|\rD v(x)-\rD v(x')\big|}{\ds\,|\ln\ds|^{\sigma}}$ with $\ds\le\frac12$.\medskip

{\bf Example 2}: under the same setup as Theorem \ref{mainresult}, if there exists constant $a>1$ so that
\[
\limsup_{q\to0+}\om_f(q)\,|\ln q|^a<\infty,
\]
then there exists an $(n,\fm,\rho,\om_f,a)$-dependent upper bound for $\dfrac{\big|\rD^2 v(x)-\rD^2 v(x')\big|}{|\ln\ds|^{a}}$ with $\ds\le\frac12$.\medskip

The paper is organised as follows. In \S \ref{sec:well-known}, we collect some well-known results regarding MAE with constant or non-constant right-hand side. We also introduce normalised and quasi-normalised domains. In \S \ref{sec:sections}, we introduce sections of MAE solution and a notation $\cF_T$ that denotes a group action of the affine group on functions. We explain what it means for the MAE to be invariant under this group action -- also known as {\it affine invariance}. Using these concepts, in Lemma \ref{Stepk}, we study MAE  with right-hand side sufficiently close to unit in the supremum norm. Building on this, we employ  in \S \ref{sec:iterativeproof} an iterative procedure to construct a sequence of shrinking sections of an MAE solution and obtain estimates that are linked to their ``eccentricities''. This is still a perturbative argument just like in \S \ref{sec:sections} because the right-hand side of MAE, when restricted to the first and largest section in the sequence, must be close to unit.  In order to kick-start the iteration for more general right-hand side, in \S \ref{sec:first:step}, we  prove the existence of small enough (first) section about any point inside the domain, and also obtain estimates on the geometry of this section, which will be ultimately linked to the scaling quantity $\qin$ in \S \ref{sec:mainproof} where we complete the proof of the Main Theorem.\medskip

\noindent{\bf Remark.} If a global $\cC^{1,1}(\overline\Om)$ bound on $v$ was already available, then our proofs would potentially be much simpler. We are not aware of such literature when one only assumes  $\cC^0$ regularity of $f$ and does not assume strict or uniform convexity of  $\Om$, or certain regularity of $\pa\Om$.  Consult, e.g., \cite[\S 17]{GTB} for global estimates and more detailed reference list on this topic; also see \cite[Remark 3.6]{AFCM1} for references of early work.
\medskip

\section*{\bf Acknowledgement}
Cheng is supported by the Leverhulme Trust (Award No. RPG-2017-098) and the EPSRC (Grant No. EP/R029628/1). O'Neill is supported by an EPSRC PhD studentship.
\medskip

\section{\bf Well-known Results on Monge-Amp\`ere Equation}\label{sec:well-known}

Geometric techniques are crucial in the study of MAE. This is particularly due to its ``affine invariant'' nature, a term we use loosely to refer to how the Hessian of a function changes under affine transform on the independent variable, which makes  the change of the determinant of the Hessian to be simply a constant scaling. 

First, we state a fundamental result on the geometry of convex sets. 

\begin{lemma}[John's Lemma (\cite{JohnF})]\label{lem:J}
If $\Om \subset \bR^n$ is a bounded, open convex set with nonempty interior and let $E$ be the ellipsoid of minimum volume containing $\Om$. Then
\[
n^{-1} E \subset \Om \subset E
\]
where  $n^{-1}E$ denotes the dilation of $E$ with respect to its centre.
\end{lemma}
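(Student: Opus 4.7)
The plan is a standard compactness-plus-variation argument. By the affine invariance of volume ratios and of ellipsoid-to-convex set containment, I may compose with the inverse of the affine map sending $E$ to $\cB_1$ (centered at $0$) and reduce to the normalised statement: if $\Om\subset\cB_1$ and $\cB_1$ is the minimum-volume ellipsoid containing $\Om$, then $n^{-1}\cB_1\subset\Om$. Existence of the minimizer itself is a quick compactness exercise: ellipsoids are parametrised by a symmetric positive definite matrix $A$ together with a centre $c\in\bR^n$, and the constraint ``contains $\Om$'' combined with an upper bound on the volume confines both parameters to a compact set over which the continuous volume functional $\det A$ attains its infimum.

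For the containment $n^{-1}\cB_1\subset\Om$, I would argue by contradiction. Assume there is $y_0\in n^{-1}\cB_1$ with $y_0\notin\Om$. Since $\Om$ is convex and $y_0$ lies outside it, the separating hyperplane theorem yields a unit vector $v$ and a constant $c$ such that $v\cdot x\le c$ for every $x\in\Om$ while $v\cdot y_0>c$. After rotating coordinates so that $v=e_1$, one has $\Om\subset\cB_1\cap\{x_1\le c\}$, where $c<(y_0)_1\le|y_0|\le 1/n$.

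The main obstacle, and the real content of the proof, is to produce an ellipsoid $E'$ of volume strictly less than that of $\cB_1$ which still contains the half-ball $\cB_1\cap\{x_1\le c\}$, thereby contradicting the minimality of $\cB_1$. I consider the one-parameter family
\[
E'_\mu=\Big\{x\in\bR^n:\,\tfrac{(x_1+\mu)^2}{a^2}+\tfrac{x_2^2+\cdots+x_n^2}{b^2}\le 1\Big\},\qquad \mu>0.
\]
Forcing the boundary of $E'_\mu$ to pass through $(-1,0,\ldots,0)$ gives $a=1-\mu$, and forcing it to pass through the boundary circle $\{x_1=c,\;x_2^2+\cdots+x_n^2=1-c^2\}$ then determines $b^2=(1-c)(1-\mu)^2/(1-c-2\mu)$. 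One checks $b>a$, so the function $s\mapsto(s+\mu)^2/a^2+(1-s^2)/b^2$ is convex on $[-1,c]$; its maximum is attained at the endpoints, where it equals $1$ by construction. Hence the whole spherical cap $\{|x|=1,\,x_1\le c\}$ lies inside $E'_\mu$, and by convexity so does its convex hull, which is exactly the half-ball. The squared volume ratio of $E'_\mu$ to $\cB_1$ equals $(1-\mu)^{2n}(1-c)^{n-1}/(1-c-2\mu)^{n-1}$; its $\mu$-derivative at $\mu=0$ is $-2n+2(n-1)/(1-c)$, which is strictly negative precisely when $c<1/n$. Since this is exactly the standing hypothesis, a small $\mu>0$ yields an ellipsoid $E'_\mu\supset\Om$ with volume strictly less than that of $\cB_1$, contradicting minimality and completing the proof.
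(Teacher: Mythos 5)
Your proof is correct, but it is worth noting that the paper does not prove this lemma at all: it is quoted as a known result with a citation to John's 1948 paper, so there is no internal argument to compare against. What you supply is the classical variational proof (essentially the one in standard references on convex geometry): reduce by an affine map to the case where the minimal ellipsoid is $\cB_1$, obtain existence by compactness of the parameter set $(A,c)$, and, assuming a point of $n^{-1}\cB_1$ is missed, separate to get $\Om\subset\cB_1\cap\{x_1\le c\}$ with $c<1/n$ and beat $\cB_1$ with the one-parameter family of shifted ellipsoids of revolution. I checked the computations: $a=1-\mu$, $b^2=(1-c)(1-\mu)^2/(1-c-2\mu)$, the convexity-of-$g$ argument showing the spherical cap (hence, by the chord argument, its convex hull, the cap body) lies in $E'_\mu$, and the logarithmic derivative $-2n+2(n-1)/(1-c)<0$ exactly when $c<1/n$ are all right, so the contradiction with minimality is genuine. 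The only caveat is a small imprecision you inherit from the statement itself: for $y_0\notin\Om$ but $y_0\in\pa\Om$ one only gets a supporting hyperplane with $v\cdot y_0\ge c$ rather than strict inequality, so the argument as written yields $c\le 1/n$ and hence the interior of $n^{-1}E$ inside $\Om$ and $n^{-1}E\subset\overline\Om$; this closure version is the standard form of John's lemma (sharp for the simplex, where $n^{-1}E$ touches $\pa\Om$) and is all the paper ever uses, but you should state it that way rather than claim strict separation for boundary points of an open set.
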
 

 For such ellipsoid $E$, there exists an affine transform $Tx=A(x-\xi)$ for some $\xi\in\bR^n$ and  real, invertible matrix $A$ that maps $n^{-1}E$ to the unit ball, which also maps $\Om$ to a so-called {\it normalised} set $T\Om$. Apparently, matrix $A$ can be chosen to be symmetric positive definite without ``rotation'' (otherwise apply polar factorisation on $A$). Although many results in literature are stated under the assumption of a normalised domain, we remark that the minimality of $E$ is not necessary for their proofs to work. The essential assumption is that the domain can be affine transformed to one that contains $\cB_1$ and is contained in $\cB_n$.  We shall call such domain {\it quasi-normalised}. 
  
Next onto properties of convex function $u$. A basic fact is that it is  locally uniformly Lipschitz (\cite[Cor. A.23]{AFB2}). In particular, if a convex function $u\le 0$ on $\Om$ and it is differentiable at $x\in\Om$, then an elementary calculation (\cite[Lem. 3.2.1]{CG1}) gives
\be\label{Lipbound}
|\rD u(x)|\le \frac{-u(x)}{\dist(x,\pa\Om)}.
\ee

There are a couple of maximum principles:  the Alexandrov's maximum principle (\cite[Thm. 1.4.2]{CG1}, \cite[Thm. 2.8]{AFB2}) for convex functions in $\cC^0(\overline\Om)$ and the Alexandrov-Bakelman-Pucci's maximum principle (\cite[Lem. 9.2]{GTB}, \cite[Thm. 1.4.5]{CG1}) for general functions in $\cC^0(\overline\Om)$. 
For our purposes, it suffices to adapt the former one to $\cC^2$ functions.
\begin{lemma} \label{lem:max}
Consider a bounded and convex set $\Om \subset \bR^n$  and a convex function $u \in \cC^2(\Om)\cap \cC^0(\overline{\Om})$ with $u = 0$ on $\pa \Om$. Then, for constant $C_{\!\mathsf A}$,
\be\label{Alexandrov}
|u(x)|^n \leq C_{\!\mathsf A}\, \diam(\Om)^{n-1}\dist(x,\pa \Om)\int_\Om \det\mathrm{D}^2u(y){\rd}y,\qquad\forall\,x \in \Om.
\ee
\end{lemma}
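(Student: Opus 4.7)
The statement is a routine $\cC^2$ specialisation of the Aleksandrov maximum principle for general convex $\cC^0(\overline{\Om})$ functions already cited in the paragraph above (\cite[Thm.~1.4.2]{CG1}, \cite[Thm.~2.8]{AFB2}), which asserts
\[
|u(x)|^n \;\leq\; C_{\!\mathsf A}\, \diam(\Om)^{n-1}\, \dist(x,\pa\Om)\, |\pa u(\Om)| \qquad \forall\,x \in \Om.
\]
The only adaptation required is to replace $|\pa u(\Om)|$ by $\int_\Om \det \rD^2 u(y)\,\rd y$ using the $\cC^2$ regularity of $u$.

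For this identification, note that since $u \in \cC^2(\Om)$ is convex, the sub-differential $\pa u(y)$ at each $y \in \Om$ reduces to the singleton $\{\rD u(y)\}$, so $\pa u(\Om) = \rD u(\Om)$. The area formula applied to the $\cC^1$ map $\rD u : \Om \to \bR^n$, whose Jacobian $\det \rD^2 u$ is non-negative by convexity, then gives
\[
|\rD u(\Om)| \;=\; \int_{\bR^n} \#\bigl\{y \in \Om : \rD u(y) = z\bigr\}\,\rd z \;\leq\; \int_\Om \det \rD^2 u(y)\,\rd y,
\]
with equality precisely when $\rD u$ is injective on $\Om$. Chaining the two displayed inequalities yields \eqref{Alexandrov}.

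The geometric mechanism behind the underlying Aleksandrov inequality -- which I would sketch rather than reprove from scratch -- is a sliding tangent-plane argument that lower bounds $|\rD u(\Om)|$ by the volume of the scaled polar body $|u(x)|\cdot(\Om-x)^\circ$: for any slope $p$ with $p\cdot(y-x)\leq |u(x)|$ on $\pa\Om$, the convex function $z\mapsto u(z)-p\cdot(z-x)$ attains its minimum in the interior, so $p\in\rD u(\Om)$. Using a nearest boundary point $y^\star$ to $x$, the supporting hyperplane at $y^\star$ confines $\Om - x$ to a half-space whose depth in that distinguished direction is $\dist(x,\pa\Om)$, and combining with $\Om - x \subset \cB_{\diam(\Om)}$ produces the asymmetric factor $\diam(\Om)^{n-1}\,\dist(x,\pa\Om)$ rather than an isotropic $\dist(x,\pa\Om)^n$. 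There is no substantial obstacle beyond bookkeeping in the present lemma; the convex-geometric content lives in the cited statements, and my only real task is the area-formula step above.
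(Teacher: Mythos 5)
Your proposal is correct and follows essentially the same route as the paper, which simply adapts the cited Alexandrov maximum principle (\cite[Thm.~1.4.2]{CG1}, \cite[Thm.~2.8]{AFB2}) to $\cC^2$ functions by identifying the Monge--Amp\`ere measure $|\pa u(\Om)|$ with $\int_\Om \det\rD^2 u$, exactly as you do via the area formula. The paper in fact records this identification earlier (``If $u\in\cC^2(\Om)$, ${\cM}u$ coincides with $\det\rD^2u(x)\,\rd x$'') and offers no further proof, so your write-up, including the optional sketch of the polar-body argument, is consistent with and slightly more detailed than the paper's treatment.
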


A useful consequence is that points at which the solution takes different values are well separated.

\begin{corollary}\label{cor:sep}
Under the same assumptions and notations as Lemma \ref{lem:max},  for any $x_1,x_2\in\overline{\Om}$,
\[
\dist(x_1,x_2)\geq \frac{|u(x_1)-u(x_2)|^n}{ C_{\!\mathsf A}\,\diam(\Om)^{n-1}\int_\Om \det\rD^2u(y){\rd}y}\,,
\]
which naturally implies the \emph{global} $\cC^\frac1n(\overline\Om)$ bound on $u$ if $\det\rD^2u$ is uniformly bounded in $\Om$.
\end{corollary}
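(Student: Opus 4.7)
The plan is to derive the stated separation inequality as a direct consequence of Lemma \ref{lem:max} applied to a shifted convex function on a suitable sub-level set of $u$. Because $u$ is convex on a compact set with $u=0$ on $\pa\Om$, the maximum principle for convex functions gives $u\le 0$ throughout $\overline{\Om}$, so I may relabel indices to assume $u(x_1)\le u(x_2)\le 0$. The case $u(x_1)=u(x_2)$ is vacuous, so I assume the strict inequality $u(x_1)<u(x_2)$.

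First I would introduce the sub-level set $\Om':=\{x\in\Om:u(x)<u(x_2)\}$ together with the shifted function $\tilde u := u-u(x_2)$, or equivalently $\Om':=\{u<0\}$ and $\tilde u:=u$ when $u(x_2)=0$ (which forces $x_2\in\pa\Om$, up to the degenerate case $u\equiv 0$ which is trivial). In either sub-case $\Om'$ is an open, bounded, convex set as a strict sub-level set of a continuous convex function on a convex open domain, and $\tilde u\in\cC^2(\Om')\cap\cC^0(\overline{\Om'})$ is convex with $\det\rD^2\tilde u=\det\rD^2 u$.

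The mildly delicate point is verifying $\tilde u=0$ on $\pa\Om'$. When $u(x_2)<0$ one has $u=0>u(x_2)$ on $\pa\Om$, which forces $\overline{\Om'}\Subset\Om$, and then continuity of $u$ gives $u\equiv u(x_2)$ on the entirety of $\pa\Om'$; when $u(x_2)=0$ the prescribed boundary condition handles $\pa\Om'\cap\pa\Om$ while continuity handles the interior portion of $\pa\Om'$. Applying Lemma \ref{lem:max} to $\tilde u$ on $\Om'$ at the point $x_1\in\Om'$ then yields
\[
|u(x_1)-u(x_2)|^n=|\tilde u(x_1)|^n \le C_{\!\mathsf A}\,\diam(\Om')^{n-1}\dist(x_1,\pa\Om')\int_{\Om'}\det\rD^2 u\,\rd y.
\]

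Finally I would relax each factor to its $\Om$-counterpart. The inclusion $\Om'\subset\Om$ automatically handles the diameter and the integral, so the true heart of the argument is to show $x_2\in\pa\Om'$, which would give $\dist(x_1,\pa\Om')\le|x_1-x_2|$. This follows from one-dimensional convexity along the segment $\gamma(t):=(1-t)x_1+tx_2$: since $t\mapsto u(\gamma(t))$ is convex with $u(x_1)<u(x_2)$, one obtains the strict inequality $u(\gamma(t))\le (1-t)u(x_1)+tu(x_2)<u(x_2)$ for all $t\in[0,1)$, so $\gamma(t)\in\Om'$ on $[0,1)$ and $x_2=\gamma(1)\in\pa\Om'$. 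Rearranging delivers the displayed inequality, and the global $\cC^{1/n}(\overline\Om)$ bound under uniform boundedness of $\det\rD^2 u$ follows by bounding $\int_\Om\det\rD^2 u\le \|\det\rD^2 u\|_{L^\infty}|\Om|$ and taking $n$-th roots. I do not anticipate any serious obstacle: the only care required is the case split according to the sign of $u(x_2)$ when constructing the auxiliary pair $(\Om',\tilde u)$ so that $\tilde u$ has the correct zero boundary data.
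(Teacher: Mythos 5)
Your proposal is correct and follows essentially the same route as the paper: the paper's proof applies Lemma \ref{lem:max} to the shifted function $u-u(x_1)$ (with $u(x_1)>u(x_2)$, i.e.\ your labels swapped) on the sub-level set $\{x\in\Om\,|\,u(x)<u(x_1)\}$, exactly your pair $(\Om',\tilde u)$. You merely spell out the details the paper leaves implicit (zero boundary data on $\pa\Om'$, $x_2\in\pa\Om'$ via convexity along the segment, and relaxing $\diam$, the integral, and the distance to their $\Om$-counterparts), all of which check out.
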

\begin{proof}
Suppose $u(x_1)>u(x_2)$. Apply Lemma \ref{lem:max} to the function $u(x)-u(x_1)$ over the domain
$
\big\{x\in\Om\,\big|\, u(x)<u(x_1)\big\}.
$
\end{proof}

Another type of well-known results is the comparison principle (e.g., \cite[Thm. 1.4.6]{CG1}, \cite[Thm. 2.10]{AFB2}).  We again adapt it for $\cC^2$ functions in the next lemma.

\begin{lemma}\label{lem:comp}
Consider a bounded and convex set $\Om \subset \bR^n$  and convex functions $u,v \in \cC^2(\Om)\cap \cC^0(\overline{\Om})$. If $\det\mathrm D^2 u(x)\le \det \mathrm D^2 v(x)$ in $\Om$ and $u \ge v$ on $\pa \Om$, then $u(x)\ge v(x)$ in $\Om$.
\end{lemma}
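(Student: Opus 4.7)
\textbf{Proof proposal for Lemma \ref{lem:comp}.} My plan is a standard contradiction argument in the interior, combined with a strictly convex perturbation of $v$ that turns the (possibly weak) Hessian inequality at the minimum point into a strict determinantal inequality. The guiding observation is that $\det$ is strictly monotone on the cone of positive semidefinite matrices with respect to the Loewner order, and adding $2\ep I$ to a psd matrix raises its determinant by a definite amount controlled by $\ep$.

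Concretely, I would suppose for contradiction that $u(x_0)<v(x_0)$ at some $x_0\in\Om$. Choose $R>\max_{x\in\overline\Om}|x-x_0|$ and, for small $\ep>0$, define the perturbed function
\[
v_\ep(x):=v(x)+\ep\bigl(|x-x_0|^2-R^2\bigr).
\]
Then $v_\ep\le v$ on $\overline\Om$ by construction, so $u-v_\ep\ge u-v\ge0$ on $\pa\Om$, while at $x_0$ we have $(u-v_\ep)(x_0)=(u-v)(x_0)+\ep R^2\cdot(-1)=u(x_0)-v(x_0)-\ep R^2<0$ for all sufficiently small $\ep>0$. Continuity of $u-v_\ep$ on $\overline\Om$ then forces its minimum to be attained at an interior point $x_\ast\in\Om$.

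At this interior minimum, the second-order condition gives $\rD^2 u(x_\ast)\ge \rD^2 v_\ep(x_\ast)=\rD^2 v(x_\ast)+2\ep I$ in the Loewner order, and both sides are symmetric positive definite since $v$ is convex. Monotonicity of the determinant on the psd cone (or, more quantitatively, Minkowski's determinant inequality $\detn(A+B)\ge\detn A+\detn B$ with $B=2\ep I$) then yields
\[
\det \rD^2 u(x_\ast)\ge \det\bigl(\rD^2 v(x_\ast)+2\ep I\bigr)\ge \bigl(\detn\rD^2 v(x_\ast)+2\ep\bigr)^n>\det\rD^2 v(x_\ast),
\]
which contradicts the hypothesis $\det\rD^2 u\le\det\rD^2 v$ in $\Om$. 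Hence $u\ge v$ on all of $\overline\Om$.

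The only subtle step is ensuring that the second-order information at an interior minimum really upgrades to a strict determinantal inequality; this is precisely why I perturb by the strictly convex term $\ep|x-x_0|^2$ rather than attempt to argue directly on $u-v$, where $\rD^2(u-v)\ge0$ at the minimum only gives $\det \rD^2 u\ge\det\rD^2 v$ (non-strict), which is not yet a contradiction. Everything else is boundary-value bookkeeping and standard matrix-monotonicity of $\det$ on psd matrices; I do not expect any obstacle beyond that point.
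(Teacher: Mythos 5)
Your proof is correct, and it takes a genuinely different route from the paper, which does not prove Lemma \ref{lem:comp} at all: the paper records it as a $\cC^2$ adaptation of the comparison principle and simply cites \cite[Thm. 1.4.6]{CG1} and \cite[Thm. 2.10]{AFB2}, where the statement is established for general convex functions via monotonicity properties of the Monge--Amp\`ere \emph{measure} under inclusion of sub-differential images. Your argument is instead a self-contained classical maximum-principle proof tailored to the $\cC^2$ setting: the strictly convex perturbation $\ep|x-x_0|^2$ forces a negative interior minimum of $u-v_\ep$, the second-order condition there gives $\rD^2u(x_\ast)\ge \rD^2v(x_\ast)+2\ep I$ in the Loewner order, and Minkowski's inequality $\detn(A+B)\ge \detn A+\detn B$ on positive semidefinite matrices upgrades this to the strict inequality $\det \rD^2u(x_\ast)>\det\rD^2v(x_\ast)$, contradicting the hypothesis. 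What the citation route buys is generality (Alexandrov solutions, no $\cC^2$ regularity of $u,v$ needed), which matters elsewhere in the paper; what your route buys is an elementary, fully explicit proof using nothing beyond calculus and matrix monotonicity, exactly adequate for the lemma as stated. One small correction: at $x_0$ you have $v_\ep(x_0)=v(x_0)-\ep R^2$, hence $(u-v_\ep)(x_0)=(u-v)(x_0)+\ep R^2$, not $(u-v)(x_0)-\ep R^2$; since $(u-v)(x_0)<0$ by assumption, this is still negative for all sufficiently small $\ep>0$, which is precisely what you claim, so the slip does not affect the argument.
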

 
\noindent {\bf Definition}. For a bounded and strictly convex domain $\Om$, define $w=\solM{\Om}$ to be the strictly convex solution to the boundary value problem
\begin{equation}\label{def:solM}
  \det \mathrm{D}^2w = 1 \qu{in}\Om \qqu{with} w\big|_{\pa\Om}=0.
\end{equation}   
For the existence and uniqueness of the (convex) Alexandrov solution $w\in \cC^0(\overline\Om)$ to \eqref{def:solM}, we refer to \cite[Thm. 1.6.2]{CG1} and \cite[Thm. 2.13]{AFB2} with the latter result actually {\it not} requiring {strict} or uniform convexity of $\Om$. The strict convexity of $w$ inside $\Om$ is due to \eqref{mm:strict}. Also, see, e.g.,  \cite[Thm. 3.10]{AFB2} for the proof of $w\in\cC^\infty(\Om)$. \medskip

\noindent{\bf Notations}. 
All versions of $w$ denote convex solutions to the MAE with unit right-hand side.

 \begin{corollary}\label{cor:est:com}
 Consider a strictly convex domain $\Om\subset\bR^n$ satisfying $\cB_{r_1}(\xi_1)\subset \Om\subset \cB_{r_2}(\xi_2)$ for positive constants $r_1, r_2$ and $\xi_1,\xi_2\in\bR^n$. Consider a strictly convex solution $u\in\cC^2(\Om)\cap\cC^0(\overline\Om)$ to
 \begin{align*}
     \detn \mathrm{D}^2u &= f \qu{    in    }\Om , \\
      u &=0 \qu{    on    }\pa \Om .      
\end{align*} 
Suppose $\sup_\Om| f- 1 |<1$ and let $w:=\solM{\Om}$. Then, we have
\begin{align}\label{est:wm}
 -\frac {r_2^2}2\le 
 \inf_{\Om}w&\le -\frac {r_1^{2}}2,\\
 \label{comp12}
\sup_\Om| u- w | &\le  \frac{r_2^2}2\sup_\Om| f- 1 |,\\[1mm]
\label{vw:mm:comp}\big|\inf_{\Om}u- \inf_{\Om}w \big|&\le\sup_\Om|u-w|.
\end{align}
 \end{corollary}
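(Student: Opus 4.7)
The plan is to address the three inequalities in turn, all via explicit barriers combined with the comparison principle of Lemma \ref{lem:comp}.

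For \eqref{est:wm}, I would use quadratic barriers. Since $\Om\subset\cB_{r_2}(\xi_2)$, the function $\bar w(x):=\tfrac12(|x-\xi_2|^2-r_2^2)$ is convex, $\cC^\infty$, satisfies $\det\mathrm D^2\bar w\equiv 1=\det\mathrm D^2 w$, and obeys $\bar w\le 0= w$ on $\pa\Om$. Lemma \ref{lem:comp} then gives $w\ge \bar w\ge -r_2^2/2$ on $\Om$, yielding the lower bound. For the upper bound, restrict attention to the inner ball $\cB_{r_1}(\xi_1)\subset\Om$: on this ball, $\underline w(x):=\tfrac12(|x-\xi_1|^2-r_1^2)$ vanishes on $\pa\cB_{r_1}(\xi_1)$ while $w\le 0$ there (by convexity and the boundary condition), and both satisfy $\det\mathrm D^2=1$, so Lemma \ref{lem:comp} forces $w\le\underline w$ in $\cB_{r_1}(\xi_1)$, in particular $\inf_\Om w\le w(\xi_1)\le\underline w(\xi_1)=-r_1^2/2$.

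For \eqref{comp12}, set $\delta:=\sup_\Om|f-1|\in[0,1)$ and compare $u$ against the scaled barriers $(1\pm\delta)w$, which are admissible convex $\cC^2$ functions since $0\le 1-\delta<1+\delta$. Because $w$ is convex with $\det\mathrm D^2 w=1$, we get $\det\mathrm D^2\big((1-\delta)w\big)=(1-\delta)^n\le f^n=\det\mathrm D^2 u$ and likewise $\det\mathrm D^2\big((1+\delta)w\big)=(1+\delta)^n\ge\det\mathrm D^2 u$; both barriers agree with $u$ on $\pa\Om$. Lemma \ref{lem:comp} therefore sandwiches
\[
(1+\delta)\,w(x)\le u(x)\le (1-\delta)\,w(x)\qquad\text{in }\Om,
\]
so $|u-w|\le\delta\,|w|$ pointwise, and \eqref{est:wm} gives $|w|\le r_2^2/2$, which is exactly \eqref{comp12}.

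Finally, \eqref{vw:mm:comp} is a soft consequence of the uniform bound on $u-w$: for any $x\in\Om$, $u(x)\le w(x)+\sup_\Om|u-w|$, whence $\inf_\Om u\le \inf_\Om w+\sup_\Om|u-w|$, and swapping roles of $u$ and $w$ gives the reverse inequality. I do not foresee a substantive obstacle here, the only thing to double-check is that the scaled barriers $(1\pm\delta)w$ still belong to the $\cC^2(\Om)\cap\cC^0(\overline\Om)$ regularity class demanded by Lemma \ref{lem:comp}, which is immediate from $w\in\cC^\infty(\Om)\cap\cC^0(\overline\Om)$ as noted right before the corollary.
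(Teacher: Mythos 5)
Your proposal is correct and follows essentially the same route as the paper: quadratic barriers $\tfrac12(|x-\xi_i|^2-r_i^2)$ with the comparison principle for \eqref{est:wm} (the paper compares on all of $\Om$ rather than restricting to the inner ball, an immaterial difference), the sandwich $(1+\delta)w\le u\le(1-\delta)w$ for \eqref{comp12}, and the standard infimum-stability argument for \eqref{vw:mm:comp}.
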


\begin{proof}
 Let 
 \[
 w(z )=\textstyle\inf_{\Om}w\qqu{for some}z\in\Om.
 \] By applying Lemma \ref{lem:comp} to  $\frac {1}2 (|x-\xi_1|^2-r_1^2)$ and $w(x)$, we  find $\inf_{\Om}w\le w(\xi_1)\le -\frac {1}2 r_1^2$, hence proving the upper bound of \eqref{est:wm}.
 On the other hand, applying Lemma \ref{lem:comp} to $w(x)$ and $\frac {1}2  (|x-\xi_2|^2-r_2^2)$, we  find    $w(z )\ge\frac {1}2 (|z -\xi_2|^2-r_2^2)\ge -\frac {1}2  r_2^2$, hence proving the lower bound of \eqref{est:wm}. 
 
Next, with $\delta:=\sup_\Om| f- 1 |$, we have 
\[
\detn \mathrm{D}^2(1-\delta)w \leq \detn \mathrm{D}^2u \leq \detn\mathrm{D}^2(1+\delta)w \qqu{in} \Om.
\]
Then  by comparison principle Lemma \ref{lem:comp} we have 
\be\label{comp:int}
(1+\delta)w \leq u   \leq (1-\delta ) w \qqu{in} \Om.
\ee
 In view of \eqref{est:wm}, we prove \eqref{comp12}.

Finally,  let $u(y )=\inf_{\Om}u$ for some $y\in\Om$. Then, we bound $u(y) - w(z)$ from above and from below, respectively, using $u(y)\le u(z)$ and $-w(z)\ge -w(y)$. Thus we prove \eqref{vw:mm:comp}.
\end{proof}

Now we move on to {\it strict} convexity of solutions to MAE. Caffarelli proved a fundamental result in \cite[Thm. 1]{LC4} in terms of extremal points of the set over which $u$ is affine. It led to fruitful consequences in strict convexity (which prohibits $u$ from being affine) and solution regularity. By, e.g., \cite[Cor. 5.2.2]{CG1}, \cite[Cor. 4.11]{AFB2}, for constants $0<f_{\min}\le \fm<\infty$ and a convex function $u\in\cC^0(\overline\Om)$ with $\Om$ being a convex and bounded domain,
\be\label{mm:strict}
\begin{aligned}
&f_{\min}\mathrm dx\le {\mathcal M}u\le  \fm\mathrm dx \;\;\text{ \;\; and \;\; } \;\; u\big|_{\pa\Om}=0\\
&\text{imply {\bf strict} convexity of $u$ in the interior of }\Om.
\end{aligned}
\ee
This  means all (classical and generalised)  solutions to MAE in this article are strictly convex in the interior of their respective domains, a fact that we will take for granted from now on. Moreover, in any  occurrence of $\solM{\cdot}$ in this article, the strict convexity requirement for its definition  \eqref{def:solM} is always satisfied because we will make sure the domain for every instance of $\solM{\cdot}$ is always the section of another convex MAE solution in the {\it interior} of its domain, hence being strictly convex. 

To finish this section, we state the following results on Monge-Amp\`ere equation with constant right-hand side and sketch ideas behind their proofs.

\begin{lemma} \label{C4Bound}
Let $\Om\subset \bR^n$ be  strictly convex and satisfy $ \cB_1\subset\Om \subset \cB_n$. Let $w:=\solM{\Om}$ and 
\[
\Ompa{s}:=\big\{x\in\Om\,\big|\,w(x)<-s\big\}.
\]
Consider any $s\in(0,\frac12 n^{-2})$. 
If $\Ompa{2s}\ne\emptyset$, then  there exists positive constant $\tilde c_2(n,s)$ so that 
\be\label{Pog:real}
\tilde c_2^{1-n} I\le  \rD^2w\le {\tilde c_2 }I\qqu{in}\Ompa{2s}. 
\ee
If $\Ompa{4s}\ne\emptyset$, then there exist  constant  $\tilde c_3(n,s)$ so that 
\be \label{est:C3a}
   |\rD^3 w|_{\cC^{0,1/2}(\Ompa{4s})}   \le  \tilde c_3.
\ee
\end{lemma}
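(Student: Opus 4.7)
The plan is to invoke Pogorelov's interior $C^2$ estimate for the Monge--Amp\`ere equation with unit right-hand side to obtain the Hessian upper bound, derive the matching lower bound from $\det\rD^2 w=1$, and then bootstrap via Evans--Krylov plus Schauder to reach $C^{3,1/2}$.

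I would first pin down the scale of $w=\solM{\Om}$. Applying \eqref{est:wm} of Corollary~\ref{cor:est:com} with $r_1=1$, $r_2=n$ gives $\inf_\Om w\in[-n^2/2,\,-1/2]$, so the standing restriction $s<\tfrac12 n^{-2}$ implies $4s<\tfrac12\le|\inf_\Om w|$ and the chain $\Ompa{4s}\Subset\Ompa{2s}\Subset\Ompa{s}\Subset\Om$ is nontrivial. Lemma~\ref{lem:max} applied to $w$ yields $|w(x)|^n\le C(n)\,\dist(x,\pa\Om)$, so on $\Ompa{s}$ one has $\dist(x,\pa\Om)\ge c(n)s^n$, after which \eqref{Lipbound} gives $\sup_{\Ompa{s}}|\rD w|\le C(n,s)$. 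Thus the nested separations and the sups of $|w|,|\rD w|$ on $\Ompa{s}$ depend only on $(n,s)$; this is what will force the final constants $\tilde c_2,\tilde c_3$ to depend only on $(n,s)$.

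For \eqref{Pog:real}, I would note that the translate $\tilde w:=w+s$ solves $\det\rD^2\tilde w=1$ on $\Ompa{s}$ with $\tilde w=0$ on $\pa\Ompa{s}$. Pogorelov's classical interior $C^2$ estimate applied to $\tilde w$ on its own (strictly convex) domain yields, for every unit vector $e$ and some $N=N(n)$,
\[
(-\tilde w(x))^N\, \tilde w_{ee}(x)\, \exp\!\big(\tfrac12|\rD\tilde w(x)|^2\big) \le C\big(n,\sup|\tilde w|,\sup|\rD\tilde w|\big).
\]
On $\Ompa{2s}$ one has $-\tilde w\ge s$, while $\sup|\tilde w|$ and $\sup|\rD\tilde w|=\sup_{\Ompa{s}}|\rD w|$ are already $(n,s)$-bounded by the previous paragraph; rearranging gives $|\rD^2 w|\le\tilde c_2(n,s)$. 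The matching lower bound is then automatic from the equation: the positive eigenvalues $\lambda_i$ of $\rD^2 w$ satisfy $\prod_i\lambda_i=1$ and $\max_i\lambda_i\le\tilde c_2$, hence $\min_i\lambda_i\ge\tilde c_2^{\,1-n}$, proving \eqref{Pog:real}.

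For \eqref{est:C3a}, the two-sided Hessian bound turns $\log\det\rD^2 w=0$ into a uniformly elliptic concave fully nonlinear equation on $\Ompa{2s}$ with ellipticity constants depending only on $(n,s)$. Evans--Krylov then delivers an interior $C^{2,\alpha_0}$ estimate on, say, $\Ompa{3s}$ with $\alpha_0=\alpha_0(n)\in(0,1)$. Differentiating the equation once yields a linear equation $w^{ij}\pa_i\pa_j(\pa_k w)=0$ whose coefficients $w^{ij}=(\rD^2 w)^{-1}_{ij}$ are uniformly elliptic and uniformly $C^{\alpha_0}$-bounded; iterated interior Schauder estimates then give $w\in\cC^\infty(\Ompa{4s})$ with every seminorm controlled only by $(n,s)$, whence in particular $|\rD^3 w|_{\cC^{0,1/2}(\Ompa{4s})}\le\tilde c_3(n,s)$. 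The main obstacle is purely bookkeeping: verifying that the nested-section separations and all Pogorelov/Schauder constants really depend only on $(n,s)$ rather than on some hidden feature of $\pa\Om$; since the right-hand side is identically $1$ and $\cB_1\subset\Om\subset\cB_n$ fixes the ambient scale, the rest is a direct quotation of classical results.
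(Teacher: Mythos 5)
Your proposal is correct and follows essentially the same route as the paper: gradient bound on $\Ompa{s}$ from the Alexandrov maximum principle plus \eqref{Lipbound}, Pogorelov's interior estimate applied to $w+s$ for the Hessian upper bound, the eigenvalue-product argument from $\det\rD^2w=1$ for the lower bound, and then Evans--Krylov followed by interior Schauder on the differentiated equation for \eqref{est:C3a}. The only differences are cosmetic (you verify nonemptiness of the sub-level sets, which the lemma takes as a hypothesis, and you obtain the nested separations slightly differently than via Corollary \ref{cor:sep}), and the $(n,s)$-dependence bookkeeping you defer is indeed routine.
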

\begin{proof} The existence and $\cC^\infty(\Om)$ regularity of $w$ is discussed below \eqref{def:solM}.
 
By  the Lipschitz bound \eqref{Lipbound} and maximum principle Lemma \ref{lem:max}, we have 
\[
\textstyle\sup_{\Ompa{s}}|\rD w|\le {\tilde c_1}/\dist\big(\Ompa{s},\pa\Om\big)^{n-1\over n}
\]
 for  constant $\tilde c_1$. Meanwhile, the separation estimate in Corollary \ref{cor:sep} also gives a lower bound on $\dist\big(\Ompa{s},\pa\Om\big)$. Thus
\be\label{est:Dw:s}
\textstyle\sup_{\Ompa{s}}|\rD w|\le \tilde c_1'(n ,s)
\ee
for  constant $ c_1'(n ,s)$. Now consider 
\[
w_{\langle s \rangle}(x):=w(x)+s\qu{on}\Ompa{s}.
\]  By Pogorelov's interior estimates (\cite{P:est}, \cite[Lem. 4.1.1]{CG1}), there exists a scalar function $G(\cdot,\cdot)$ that is monotonically increasing with the second argument so that
\[
|w_{\langle s \rangle}|\,D^2w_{\langle s \rangle}\le G\big(n,\textstyle\sup_{\Ompa{s}}|\rD w_{\langle s \rangle}|\big)I \qqu{ in}\Ompa{s}.
\] 
 Combining it with \eqref{est:Dw:s} and the fact that $|w_{\langle s \rangle}|\ge s$ in $\Ompa{2s}$ proves the upper bound of \eqref{Pog:real}. Since  all eigenvalues of $\mathrm{D}^2w$, counting multiplicity, multiply to 1, we prove the lower bound of \eqref{Pog:real}. 

Now that we have shown $w$ is uniformly elliptic on $\Ompa{2s}$, we then can show the uniform ellipticity for the linearised version of $\det\rD^2w=1$ as well as for the PDE of a given component of $\mathrm{D}w$ that results from taking one derivative of $\det\rD^2w=1$. Note that Corollary \ref{cor:sep}  gives  lower bounds on $\dist(\Ompa{3s},\pa\Ompa{2s})$ and $\dist(\Ompa{4s},\pa\Ompa{3s})$. 
This means we can   apply  Evans-Krylov's H\"older interior estimates on  $\rD^2 w$; see, e.g.,  \cite[eq. (17.63) - (17.65), (17.32), (17.33), especially (17.41)]{GTB}. Then, \eqref{est:C3a} follows from Schauder's interior estimates (e.g., \cite[Thm 6.2]{GTB}).
\end{proof}

The next lemma provides  estimates on the difference of two solutions to MAE with constant right-hand sides. It requires information of two solutions in a shared domain, but does not require any information at the boundary.

\begin{lemma} \label{lem:ol}
For positive constants $r,b,b',\delta,\lambda, \Lambda$ and $\alpha \in(0,1)$, suppose  $w, w'\in\cC^{2,\alpha}({\Oms})$ satisfy
 \[
 |bw-b'w'|\le\delta, \qquad\quad
\det \mathrm{D}^2w=\det \mathrm{D}^2w'=1
\]
and matrix inequalities $\lambda I\le \rD^2 w,\rD^2 w'\le \Lambda I$ in $\Oms$.
Let $C_{\alpha }$ be an upper bound of the $\cC^{\alpha }(\Oms)$ seminorms of $\rD^2 w$ and  $\rD^2w'$. 
Then,   there exist positive constants $\beta_1,\wt\beta_1,C$ that only depend on $n,\alpha$ with $\beta_1\ge\wt\beta_1$ so that
\begin{equation} 
\label{C2Est}
\begin{aligned}
& r_0^{\ell}\,\big|\mathrm{D}^\ell (w-w')\big|_{\cC^0(\cB_{r-r_0})}+r_0^{\ell+\alpha}\,\big|\mathrm{D}^\ell (w-w')\big|_{\cC^\alpha(\cB_{r-r_0})}\\
\le \,& C\,\lambda^{-\widetilde\beta_1}\Lambda^{\beta_1-\widetilde\beta_1}\,  \big(  1 + r^{\alpha }C_{\alpha }\big)\,\frac{\delta+r^{2 }|b-b'|}{b+b'}\,, 
\qquad\qquad\forall\,r_0\in(0,r) \text{ \;  and \; }\ell=2,3.
\end{aligned}
\end{equation}
\end{lemma}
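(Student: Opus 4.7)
The strategy is to subtract the two MAEs so that $u := w - w'$ satisfies a linear elliptic equation, extract a $\cC^0$ bound on $u$ from the hypothesis $|bw - b'w'|\le\delta$, and then apply interior Schauder estimates on shrinking concentric balls. Since $\det\rD^2 w = \det\rD^2 w' = 1$, the integrated cofactor identity $\det M - \det M' = \int_0^1 \mathrm{tr}(\mathrm{cof}((1-t)M'+tM)\,(M-M'))\,\rd t$ applied with $M = \rD^2 w$, $M' = \rD^2 w'$ yields $a^{ij}\partial_{ij} u = 0$ on $\Oms$, where $a^{ij}(x) := \int_0^1 [\mathrm{cof}((1-t)\rD^2 w'(x) + t\rD^2 w(x))]^{ij}\,\rd t$. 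Each convex combination of the Hessians still obeys $\lambda I \le \cdot \le \Lambda I$, and the eigenvalues of the cofactor are $(n-1)$-fold products of those of its argument, so the operator $L := a^{ij}\partial_{ij}$ is uniformly elliptic on $\Oms$ with ellipticity constants $\lambda^{n-1}$ and $\Lambda^{n-1}$; moreover $[a^{ij}]_{\cC^\alpha(\Oms)}\le C(n)\Lambda^{n-2}C_\alpha$, because the cofactor is a matrix of polynomials of degree $n-1$ in the entries.

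The next step is to bound $|u|_{\cC^0(\Oms)}$ by the factor appearing on the right-hand side of \eqref{C2Est}. The algebraic identity $bw - b'w' = \tfrac12(b+b')u + \tfrac12(b-b')(w+w')$ is the starting point. The hypotheses and both MAEs are invariant under the substitution $(w,w')\mapsto(w+p/b,\,w'+p/b')$ for any affine $p$: this change preserves $bw - b'w'$, keeps both determinants equal to $1$, and does not affect the eigenvalue bounds or $C_\alpha$. Choosing $p$ so that $(w+w')(0) = 0$ and $\rD(w+w')(0) = 0$, and using $\rD^2(w+w')\le 2\Lambda I$, Taylor's theorem gives $|w+w'|\le \Lambda r^2$ on $\Oms = \cB_r$, so the identity above implies
\[
|u|_{\cC^0(\Oms)}\;\le\;\frac{2\delta + \Lambda r^2|b-b'|}{b+b'}\;\le\;2\Lambda\,\frac{\delta + r^2|b-b'|}{b+b'}.
\]

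For the $\ell = 2$ case of \eqref{C2Est}, rescale $\tilde u(y):=u(ry)$, which transfers $Lu=0$ to $\cB_1$ with the same ellipticity and with coefficient Hölder seminorm $r^\alpha[a^{ij}]_{\cC^\alpha(\Oms)}$. Standard interior Schauder on $\cB_1$ then gives, for each $y_0\in(0,1)$,
\[
y_0^2|\rD^2\tilde u|_{\cC^0(\cB_{1-y_0})} + y_0^{2+\alpha}[\rD^2\tilde u]_{\cC^\alpha(\cB_{1-y_0})}\;\le\; C\bigl(n,\alpha,(\Lambda/\lambda)^{n-1}\bigr)\,\bigl(1 + r^\alpha[a^{ij}]_{\cC^\alpha}\bigr)\,|\tilde u|_{\cC^0(\cB_1)};
\]
setting $y_0 = r_0/r$, undoing the rescaling, and substituting $[a^{ij}]_{\cC^\alpha}\le C(n)\Lambda^{n-2}C_\alpha$ together with the $\cC^0$ bound from the previous step proves the $\ell=2$ claim. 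For $\ell=3$, the bootstrap used in the proof of Lemma \ref{C4Bound} (Pogorelov $\to$ Evans--Krylov $\to$ Schauder) shows $w,w'\in\cC^\infty(\Oms)$ with interior bounds on $\rD^3 w,\rD^3 w'$ (hence on $\rD a^{ij}$) controlled by $n,\alpha,\lambda,\Lambda,C_\alpha$ and the distance to $\pa\Oms$. Differentiating $a^{ij}\partial_{ij}u=0$ gives the linear equation $L(\partial_k u) = -(\partial_k a^{ij})\,\partial_{ij}u$, whose right-hand side is controlled in $\cC^\alpha$ by the $\cC^{2,\alpha}$ bound on $u$ from the $\ell=2$ case combined with the bootstrap. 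A second Schauder application on a slightly smaller concentric ball then produces a $\cC^{2,\alpha}$ bound on $\partial_k u$, i.e.\ the $\ell=3$ case of \eqref{C2Est}.

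The conceptual steps are standard; the principal challenge is the careful bookkeeping of powers of $\lambda$ and $\Lambda$ so that the final constant takes the advertised form $C\,\lambda^{-\wt\beta_1}\Lambda^{\beta_1-\wt\beta_1}(1+r^\alpha C_\alpha)$ with $\beta_1,\wt\beta_1$ depending only on $n$ and $\alpha$. The principal subtlety is obtaining \emph{linear} (rather than polynomial) dependence on $C_\alpha$: this forces one to use the form of interior Schauder in which the coefficients' Hölder seminorm enters linearly (via a perturbation/rescaling argument) and then to absorb the residual $\Lambda^{n-2}$ from the cofactor and all ellipticity ratios $(\Lambda/\lambda)^{n-1}$ from Schauder into the factors $\Lambda^{\beta_1-\wt\beta_1}$ and $\lambda^{-\wt\beta_1}$ --- using the automatic bound $\lambda\le 1\le \Lambda$ that follows from $\det\rD^2 w=1$ together with the eigenvalue pinching.
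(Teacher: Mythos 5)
Your proposal is correct and follows essentially the same route as the paper's proof: subtract the two equations via the integrated cofactor (Jacobi) identity to get a uniformly elliptic linear equation, extract the $\cC^0$ bound from $|bw-b'w'|\le\delta$ after an affine normalisation of $w+w'$, apply weighted interior Schauder estimates, and differentiate the equation once for the $\ell=3$ case. The only differences are cosmetic: the paper absorbs the affine correction into the definition of the auxiliary function $u$ (so that $\rD^2 u=\tfrac{b+b'}2\rD^2(w-w')$) rather than shifting $w,w'$ by affine functions, and it bounds the adjugate via the Minkowski determinant theorem instead of your direct eigenvalue-product argument, both leading to the same form of power-law dependence on $\lambda,\Lambda$.
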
 
\begin{proof} Define
\[\begin{aligned}
u(x)&:=(bw-b'w')(x)-\frac{b-b'}2\,\big((w+w')(x)-(w+w')(0)+x\cdot (w+w')(0)\big).
\end{aligned}
\]
By the assumed bound on $(bw-b'w')$ and Taylor expansion on $(w+w')$ about $0$ together with the assumed $\cC^2$ upper bound on $w,w'$, we have
\begin{equation}\label{u:sigma}
|u|_{\cC^0(\cB_r)}\le\delta+\Lambda\,r^2|b-b'|\,.
\end{equation}
Next, in the identity, $\det \mathrm{D}^2w-\det \mathrm{D}^2w'=\displaystyle\int_0^1{{\rd}\over {\rd}t}\det\rD^2 w_t\,{\rd}t$ for $w_t:=t w+(1-t)w'$, we apply Jacobi's formula on the $t$ derivative, followed by replacing $\rD^2 (w-w')={2\over b+b'}\rD^2 u$, to find,  
\be\label{w:theta}
\textnormal{trace}\left(\rD^2 u\int_0^1\text{adj}(\rD^2w_t)\,{\rd}t\right)=0
\ee
where ``adj'' denotes matrix adjugate so that $\text{adj}(\rD^2w_t)=(\rD^2w_t)^{-1}\det\rD^2w_t$. Since $\rD^2w_t$ is a convex combination of $\rD^2 w$ and $\rD^2 w'$, we apply the Minskowski determinant theorem to obtain a lower bound for $\det\rD^2w_t$ and also use $\Lambda^{n}$ as the upper bound for $\det\rD^2w_t$ to have
\[
\Lambda^{-1} I\le \text{adj}(\rD^2w_t)\le \lambda^{-1}\Lambda^{n} I
 \]
so we can view \eqref{w:theta} as a uniformly elliptic, 2nd order {\it linear} PDE for $u$. We then prove the $\ell=2$ case of \eqref{C2Est} via combining \eqref{u:sigma} and Schauder's interior estimates applied on \eqref{w:theta}, a process that will incur a factor at the order of the distance-weighted $\cC^{0,\alpha }(\Om)$ norm of  the coefficients of $\rD^2 u$ in \eqref{w:theta} for which there exists an upper bound at the order of $\big|\text{adj}(\rD^2w_t)\big|_{\cC^0}+r^{\alpha }\big|\text{adj}(\rD^2w_t)\big|_{\cC^\alpha}$. Here, an entry of $\text{adj}(\rD^2w_t)$ equals a sum of products of  entries of $\rD^2w_t$. Also,  consult the proofs of \cite[Lem. 6.1, Thm. 6.2]{GTB} to see that the final estimates depend linearly on $\big|\text{adj}(\rD^2w_t)\big|_{\cC^\alpha}$ and also depend  on $\lambda$, $\Lambda$ via power laws (which has been simplified in our case since $\lambda\le 1\le\Lambda$).
    
The $\ell=3$ case of \eqref{C2Est} can be proven by mimicking  the above steps, starting with taking an arbitrary directional derivative of  \eqref{w:theta}.
\end{proof}

\noindent {\bf Remark.}  By the definition of $u$ and the assumed $\cC^2$ upper bound on $w,w'$, we can also bound
 \be\label{bw:ww}
\big|\rD^2(bw-b'w')(x)\big|\le  \tfrac12(b+b')\big|\rD^2(w-w')(x)\big|+\Lambda|b-b'|\,,\qquad\forall\,x\in\Oms\,.
\ee

\noindent{\bf Convention on constants}. From now on, constants are always strictly positive and, unless  their dependence on other parameters is explicitly given,  constants only depend on dimension $n$. 

\section{\bf Sections of Solutions to Monge-Amp\`ere Equation}\label{sec:sections}
The use of {\it sections} in studying the solution regularity of MAE has seen success since  Caffarelli's \cite{LC5,  C:C1alpha, LC4}. Here, we define it only for $\cC^1$ function $v$, and in the following form as a mapping from a triplet of arguments to a subdomain,
\begin{equation*}
 \cS(v,h,\xs) :=   \{ z \in \Om: v(z) < v(\xs) + (z-\xs) \cdot \nabla v(\xs) + h\}.
\end{equation*}
 We say this section is about point $\xs$. Let  $\cScmd{v,h}:= \cS(v,h,0)$.

 To describe the effects of affine transforms, we introduce the following notations. For any member of the affine group on $\bR^n$, i.e.,  an  invertible affine transform $T$,  define 
 \[
 \det T:= {\big|T\cB_1\big|}/{|\cB_1|},
 \]
 so that if $Tx=A(x-\xi)$ for matrix $A$ and some point $\xi\in\bR^n$, then $\det T=\det A$. 
 
 For function $v$, define
 \[
( \cF_T\,v)(y):=(\detns T) \,v\big(T^{-1}y\big).
 \]
 Then $\cF_T$ is a left group action of the affine group on functions, namely, it satisfies
 \begin{align*}
\cF_{id}\,v &=v\qqua{and}\cF_{T_1}(\cF_{T_2}\, v) =\cF_{T_1 T_2}\,v\,.
 \end{align*}
 We also have the following  ``commutation property'' between $T,\cF_T$ and the sections 
 \be\label{T:sn}
T\,\cS(v,h,\xs)=\cS\big(\cF_T\,v,(\detns T) \,h,T\xs\big),
 \ee
 which is straightforward to prove when $v$ is $\cC^1$. 

 Next,  we define the norm for a square matrix $A$ to be induced from the vector $2$-norm, namely
 \[
 \mat{A}:=\sup_{0\ne x\in \mathbb R^n}{|Ax|\over|x|},
 \]
and define the norm of affine transform  $T$ as
\[
\mat{T}:=\sup_{x_1\ne x_2}\frac{|Tx_1-Tx_2|}{|x_1-x_2|},
\]
so that if $Tx=A(x-\xi)$ then $\mat{T}=\mat{A}$.

 Derivatives under affine transforms are governed by the following elementary properties,
  \begin{align}
\label{T:Dk:al} 
\left.\begin{aligned}
|\rD^\ell(\cF_T\,v)|_{\cC^{\alpha}(T\,\Om)}&\le |\rD^\ell v|_{\cC^{\alpha}(\Om)}\mat{T^{-1}}^{\ell+\alpha}\detns T,\\
 &\qquad\forall \;v\in \cC^{\ell,\alpha}(\Om),\;\; \alpha\in[0,1]\text{ \;and integer \, }\ell\ge0,\\
  \end{aligned}\;\;\right\}\\  \label{T:D2}
	\det\rD^2_y(\cF_T\,v)(y)=\det\rD^2_xv(x),
  \qquad\forall \; v\in\cC^2(\Om),\;\; x\in\Om\text{\; and \;}y=Tx.
 \end{align}
We will refer to \eqref{T:D2} as the ``affine invariance'' of MAE from here on.

Next, for invertible affine transform $T$ and matrix $A$, we introduce their rescaled versions  as
\begin{align*}
\br{T}&:=\big({ \detni T}\big)T\qquad\text{and}\qquad\br{A}:=\big({ \detni A}\big)A,
\end{align*}
so that $\det\br{T}=1=\det\br{A}$. As an attempt to increase readability, we use $\mt{\;}$ to denote the matrix/transform norm when the argument is known to have unit determinant, namely, 
\[
\mt{\br{T}}:=\mat{\br{T}}\qquad\text{and}\qquad\mt{\br{A}}:=\mat{\br{A}}.
\]
 Note $\br{(A^{-1})}=(\br{A})^{-1}$ and  $\mt{\br{(T^{-1})}}=\mt{(\br{T})^{-1}}$.

The product ${\mt{\br{A}}}\cdot{\mt{\br{A}^{-1}}}$ equals the condition number of $\br{A}$ induced by the vector $2$-norm, and equals the eccentricity of the ellipsoid $A\cB_1$. Moreover, for any real, invertible matrix $A$, by polar factorisation, there exists an invertible, real, {\it symmetric} matrix $A_1$ with $\det A_1=1$ so that $\mt{\br{A}}=\mt{{A}_1}$ and $\mt{\br{A}^{-1}} =\mt{{A}_1^{-1}}$. Since $ {A}_1$ has $n$ real eigenvalues (counting multiplicity) whose product equals unit, and among them, the smallest absolute value equals $\mt{{A}^{-1}_1}^{-1}$ and the greatest absolute value equals $\mt{{A}_1}$, we have
$  \mt{{A}_1^{-1}} \le  \mt{{A}_1}^{n-1}$ and ${\mt{{A}_1}\le \mt{{A}^{-1}}}^{n-1}$, hence
\be\label{sc:A:A1}
{\mt{\br{A}}^{1\over n-1}\le \mt{\br{A}^{-1}}} \le  \mt{\br{A}}^{n-1}\,,\qquad\;\;\forall\, \text{real, invertible matrix }A.
\ee  

In the rescaled notations, the transform estimates \eqref{T:Dk:al} amount to
\be\label{T:Dk:al:rs} 
 \left. \begin{aligned}
|\rD^\ell(\cF_T\,v)|_{\cC^{\alpha}(T\,\Om)}&\le |\rD^\ell v|_{\cC^{\alpha}(\Om)}\mt{\br{T}^{-1}}^{\ell+\alpha} \big(\detn T\big)^{2-\ell-\alpha} ,\\&\quad\forall \; v\in\cC^{\ell,\alpha}(\Om), \; \; \alpha\in[0,1]\text{ \; and integer \, }\ell\ge0 .
 \end{aligned}\quad\right\}
\ee

The next lemma is on MAE with right-hand side sufficiently \emph{close to unit}, and will serve different technical purposes in the proofs of Lemmas \ref{lem:iter} and \ref{lem:Dini}. We are motivated by \cite[Lem. 7, Cor. 2, 3]{LC5} while endeavouring to provide enough details that will be useful in proving the Main Theorem. For example, for the iterative argument that we will employ in \S \ref{sec:iterativeproof}, estimates \eqref{est:D2w1w0} are concerned with   $\cF_T\,w^\sharp$ which will effectively plays the role of $w$ in the next step of iteration, and the right-hand side of \eqref{est:D2w1w0} shows the distance between identity and its Hessian -- crucially the square root of this Hessian is used to define the next affine transform in the iterative proof of \S \ref{sec:iterativeproof}.  

\begin{lemma}\label{Stepk}
 Suppose function $u$ is strictly convex and $\cC^2$ in a closed section  $\overline{\cScmd{u,L}}$ for $L>0$, and suppose $\cB_1(\xi) \subset \cScmd{u,L}\subset \cB_n(\xi)$ for some $\xi\in\bR^n$.
Also suppose
\be\label{v:cond}
\detn\rD^2u=g\qqua{with} \rD u(0)=\vec0,\quad u\big|_{\pa\cScmd{u,L}}=0,\quad g(0)=1,
\ee
 and
\[
 \delta:=\textstyle\sup_{\cScmd{u,L}}|g-1|\le\frac15n^{-2}.
\]
Then there exist  positive constants $c_2,C_2,C_3$  so that $w:=\mathsf{Sol}(\cScmd{u,L})$ satisfies
\begin{align}
\label{vw:sn} \big| u - w \big| \le  \tfrac12n^2\delta&\qqu{in}\cScmd{u,L}\,,\\
\label{Pog0:3i} 
c_2\,I \leq \mathrm{D}^2w \leq C_2\,I &\qqu{in}\cScmd{u,L/2}\,,\\
\label{Pog0:3ii} |\rD^2 w|_{\cC^{\alpha}}  \leq C_2^{1-\alpha}C_3^{\alpha}&\qqu{in}\cScmd{u,L/2}\,,\quad\forall\,\alpha\in[0,1]\\
\label{gradest}
|\mathrm{D}w(0)| \leq&\, nC_2\big(2  c_2^{-1}\big)^{\frac12}\delta^{\frac12}.
\end{align}

 Further, there exist positive  constants $C_4\le 1$ and $ \hc\le\frac15$ so that if  
\begin{align}\label{cond:scaling1}
0&<h\le  \hc\,,\\
\label{cond:scaling2}0&\le \delta \le \min\left\{C_4\,h\,,\,\tfrac15n^{-2}\right\},
\end{align}
 then $\cScmd{u,h}\subset\cScmd{u,L/2}$ and the following hold.
 \begin{itemize}
 \item[(i)] The affine transform
 \[
 Tx:= h^{-{1\over2}}\sqrt{\mathrm{D}^2w(0)}\,x
\]
acting on section $\cScmd{u,h}$ satisfies
 \begin{align}
 \label{n:d:1} &\cB_1  \subset  \cB_{r_-}\subset \,T\cScmd{u,h}  \subset \cB_{r_+}\subset \cB_2\\
 &\qquad\qqu{ with} \label{n:d:2}
  \Big|\frac12 r_\pm^2-1\Big| \le  O(\delta/h)+C_3\, O\big(\sqrt h\big),
 \end{align} 
 where the hidden constant coefficients in the $O(\,)$ notations only depend on $n,c_2,C_2,C_4$.\medskip
 \item[(ii)] For $w^\sharp:=\solM{\cScmd{u,h}}$,  there exist  positive constants $C_5,C_6$ so that
\begin{align}
  \label{est:D2w1w0}\big|\rD^2\big(\cF_T\,w^\sharp- \cF_T\,w\big)(0)\big|=\big|\rD^2(\cF_T\, w^\sharp)(0)- I\big|   
&\le C_5\,\delta/h ,\\[1mm]
  \label{est:D3w1w0}\textstyle\sup_{\cB_{1/4}}\big|\rD^3\big(\cF_T\, w^\sharp- \cF_T\,w\big)\big|& 
\le C_6\,\delta/h.
\end{align}
\end{itemize}
\end{lemma}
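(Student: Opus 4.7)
The plan is to split the proof into three parts matching the three groups of conclusions: (1) the bulk estimates \eqref{vw:sn}--\eqref{gradest} for $w := \solM{\cScmd{u,L}}$; (2) the normalisation statement (i); and (3) the comparison statement (ii) between $w^\sharp$ and $w$. Throughout, set $\phi := u - w$, so $|\phi| \le n^2\delta/2$ by \eqref{comp12} (applied in Corollary \ref{cor:est:com} with $r_1 = 1, r_2 = n$).

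\textbf{Part 1.} Inequality \eqref{vw:sn} is immediate from \eqref{comp12}. Combining \eqref{est:wm} and \eqref{vw:mm:comp} yields the $n$-independent lower bound $L = |\inf u| \ge 1/2 - n^2\delta/2 \ge 2/5$. After translating the section so it is quasi-normalised, I apply Lemma \ref{C4Bound} to $w$ with $s := 1/(8n^2)$. The inclusion $\cScmd{u,L/2} \subset \Ompa{4s}$ holds because $u(z) < -L/2$ implies $w(z) \le u(z) + n^2\delta/2 < -4s$, so \eqref{Pog0:3i} and a $\cC^0$ bound on $\rD^3 w$ over $\cScmd{u,L/2}$ follow with constants depending only on $n$. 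Inequality \eqref{Pog0:3ii} then comes from the interpolation $|f|_{\cC^\alpha} \le 2^{1-\alpha}|f|_{\cC^0}^{1-\alpha}|\rD f|_{\cC^0}^\alpha$ applied to $f = \rD^2 w$. For \eqref{gradest}, let $y_0 := \arg\min w$; the quadratic lower bound $w(z) \ge w(y_0) + (c_2/2)|z - y_0|^2$ together with $w(0) - w(y_0) \le n^2\delta$ (from \eqref{vw:sn} and \eqref{vw:mm:comp}) gives $|y_0| \le \sqrt{2n^2\delta/c_2}$, and then $|\rD w(0)| = |\rD w(0) - \rD w(y_0)| \le C_2|y_0|$ finishes it.

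\textbf{Part 2.} Using $u = w + \phi$ and the Taylor expansion of $w$ about $0$,
\[
u(z) - u(0) = z \cdot \rD w(0) + \tfrac12 z^T \rD^2 w(0) z + R(z) + [\phi(z) - \phi(0)], \qquad |R(z)| \le \tfrac{C_3}{6}|z|^3.
\]
Substituting $y = Tz$ and dividing by $h$, the defining inequality of $\cScmd{u,h}$ becomes
\[
\tfrac12 |y|^2 + \vec a \cdot y + h^{-1}R(T^{-1}y) + h^{-1}[\phi(T^{-1}y) - \phi(0)] < 1, \qquad \vec a := h^{-1/2}(\rD^2 w(0))^{-1/2}\rD w(0),
\]
with perturbations of size $|\vec a| = O((\delta/h)^{1/2})$, $|h^{-1}R(T^{-1}y)| = O(C_3 h^{1/2})$, and $|h^{-1}[\phi(T^{-1}y) - \phi(0)]| = O(\delta/h)$ on $|y| \le 2$. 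Picking $\hc, C_4$ sufficiently small (in terms of $n, c_2, C_2, C_3$) makes the total error less than $1/2$, which forces $\cB_1 \subset T\cScmd{u,h} \subset \cB_2$; the radii in \eqref{n:d:2} come from solving $\tfrac12|y|^2 = 1 - (\text{errors})$ on $\partial T\cScmd{u,h}$.

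\textbf{Part 3 (main obstacle).} By \eqref{T:D2} and \eqref{T:sn}, $\cF_T w^\sharp = \solM{T\cScmd{u,h}}$, whose domain satisfies $\cB_1 \subset T\cScmd{u,h} \subset \cB_2 \subset \cB_n$ by Part 2. Lemma \ref{C4Bound} therefore applies to $\cF_T w^\sharp$ with $s = 1/(8n^2)$, provided $\cB_{1/2} \subset \Ompa{4s}$ in the new domain; I verify this by comparing $\cF_T w^\sharp$ from above against $W_1(y) := \tfrac12(|y|^2 - r_-^2)$ (an explicit solution of $\det\rD^2 = 1$), obtaining via Lemma \ref{lem:comp} that $\cF_T w^\sharp \le W_1 \le -3/8$ on $\cB_{1/2}$. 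This yields the needed Hessian and $\rD^3$ bounds on $\cF_T w^\sharp|_{\cB_{1/2}}$; the analogous bounds for $\cF_T w|_{\cB_{1/2}}$ come from \eqref{Pog0:3i} and \eqref{est:C3a} via the chain rule under $T$. Next, since $u \equiv -L + h$ on $\partial\cScmd{u,h}$, \eqref{vw:sn} implies $w$ varies there by at most $n^2\delta$; picking $c$ as the midpoint of the range of $w|_{\partial\cScmd{u,h}}$ and invoking Lemma \ref{lem:comp} gives $|\tilde w - w^\sharp| \le n^2\delta/2$ in $\cScmd{u,h}$, where $\tilde w := w - c$ still solves $\det\rD^2 \tilde w = 1$. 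Transforming, $|\cF_T \tilde w - \cF_T w^\sharp| \le n^2(\delta/h)/2$ on $\cB_1$. Applying Lemma \ref{lem:ol} with $b = b' = 1$, $r = 1/2$, $r_0 = 1/4$ then gives $|\rD^\ell(\cF_T \tilde w - \cF_T w^\sharp)|_{\cC^0(\cB_{1/4})} \le C \delta/h$ for $\ell = 2, 3$; since $\cF_T \tilde w - \cF_T w$ is a constant, the same estimates hold with $\cF_T w$ in place of $\cF_T \tilde w$, and \eqref{est:D2w1w0}, \eqref{est:D3w1w0} follow upon noting $\rD^2(\cF_T w)(0) = I$ (elementary from $T = h^{-1/2}\sqrt{\rD^2 w(0)}$ and $\det\rD^2 w(0) = 1$). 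The core difficulty will be ensuring that all emerging constants depend only on $n$, especially the identification of the shift $c$ and the level-set parameter $s$.
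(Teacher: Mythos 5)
Your proposal is correct, and its overall architecture is the same as the paper's: \eqref{vw:sn}--\eqref{gradest} via Corollary \ref{cor:est:com}, Lemma \ref{C4Bound} and interpolation; item (i) via Taylor expansion of $w$ about $0$ with the gradient term controlled by \eqref{gradest} and the cubic remainder by the $\rD^3$ bound; item (ii) by feeding an $L^\infty$ bound of order $\delta/h$ on $\cF_T(w^\sharp-w)$, plus interior $\cC^2$/$\cC^{2,\alpha}$ control of both rescaled functions on $\cB_{1/2}$, into Lemma \ref{lem:ol} with $b=b'=1$ and $\rD^2(\cF_T w)(0)=I$. Where you genuinely diverge is in how the two inputs for item (ii) are produced. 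The paper re-applies the first part of the lemma at the rescaled level: it validates \eqref{v:cond}, \eqref{cond:scaling2} for $\cF_T(u+L-h)$ on the quasi-normalised $T\cScmd{u,h}$ to get $\big|\cF_T(u-w^\sharp)-const\big|\le\tfrac12 n^2\delta$, combines this with $\big|\cF_T(u-w)\big|\le\tfrac12 n^2\delta/h$, and obtains the bounds on $\cF_T w^\sharp$ through half-sections of the rescaled solution via \eqref{Pog0:3i}, \eqref{Pog3:a} and $T\cScmd{u,h/2}\supset\tfrac12T\cScmd{u,h}\supset\cB_{1/2}$. You instead (a) use that $u$ is constant on $\pa\cScmd{u,h}$, so \eqref{vw:sn} pins $w$ there to within $n^2\delta$ of a constant $c$, and a two-sided application of Lemma \ref{lem:comp} between $w-c$ and $w^\sharp$ (both unit determinant on $\cScmd{u,h}$) gives $|w-c-w^\sharp|\le\tfrac12 n^2\delta$, which rescales to $O(\delta/h)$; and (b) get the $\cC^2/\cC^3$ bounds for $\cF_T w^\sharp=\solM{T\cScmd{u,h}}$ by invoking Lemma \ref{C4Bound} directly, verifying $\cB_{1/2}\subset\Ompa{4s}$ with the barrier $\tfrac12(|y|^2-r_-^2)$. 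Route (a) is a bit more economical (no self-application of \eqref{vw:sn}, no bookkeeping of $\cF_T u$), whereas the paper's version keeps everything in the rescaled picture, which is exactly the form re-used in the iteration of \S\ref{sec:iterativeproof}; both deliver the same $\delta/h$ smallness to Lemma \ref{lem:ol}. Two details you should still write out: before using the $c_2$-quadratic growth and $|\rD w(0)|\le C_2|y_0|$ you must check, as the paper does, that the minimiser $y_0$ of $w$ lies in $\overline{\cScmd{u,L/2}}$ (via $u(y_0)-u(0)\le n^2\delta\le L/2$), since \eqref{Pog0:3i} holds only there; and in Part 2 your error bounds are proved only for $|y|\le2$, so the inclusion $T\cScmd{u,h}\subset\cB_2$ needs the convexity/connectedness step (or the paper's preliminary boundary-diameter bound \eqref{T:exp:2}) to exclude points of the convex set $T\cScmd{u,h}$ outside $\cB_2$. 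Finally, note your gradient term contributes $O\big((\delta/h)^{1/2}\big)$, exactly as in the paper's own \eqref{C3:effect}, so your accounting matches the actual proof of \eqref{n:d:2}.
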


\noindent{\bf Remark.} Note that \eqref{n:d:1} offers a simple way to quasi-normalise the smaller section $\cScmd{u,h}$.  Also, under a further assumption $\cB_{r_-'} \subset \cScmd{u,L} \subset \cB_{r_+'}$,  we can replace $C_2$ by  $1+C_2(r_+'-r_-')$  and replace $C_3$ by $C_3(r_+'-r_-')$. For simplicity, we will not make such refinement in this article. See \cite[Lem. 8.2.1]{CG1}  and \cite[Lem. 9]{BCTO}. Also see \S \ref{ss:beta}.

\begin{proof}
The existence and regularity of $w,w^\sharp$ are discussed below \eqref{def:solM}.

The assumptions $\rD u(0)=\vec0$ and $u\big|_{\pa\cScmd{u,L}}=0$ apparently implies $\textstyle\inf_{\cScmd{u,L}}u=u(0)=-L$. 

Since $w$ is strictly convex, we can suppose $w(z )=\textstyle\inf_{\cScmd{u,L}}$ for some $z \in\cScmd{u,L}$ so that
\be\label{w:z:min}
\rD w(z )=\vec 0.
\ee
By \eqref{comp12} and \eqref{vw:mm:comp} of Corollary \ref{cor:est:com}, we find
  \be\label{vw:lem}
\big|u(0)- w(z ) \big|\le\textstyle\sup_{\cScmd{u,L}}\big| u - w \big| \le  \tfrac12n^2\delta,
\ee
which proves \eqref{vw:sn}.
  Then, by \eqref{est:wm} with $r_1=1$ and the assumed $\delta\le \tfrac15n^{-2}$, we show
  \begin{align}\label{est:vmin}
 -L=u(0)\le w(z ) +\tfrac12n^2 \delta\le- \tfrac25 .
\end{align}
 Also, \eqref{vw:sn} implies $\textstyle \sup_{\cScmd{u,L/2}} w(x)\le \sup_{\cScmd{u,L/2}}u(x)+\frac12n^2 \delta= -\frac12 L+\frac12n^2 \delta$. Then,  in view of \eqref{est:vmin} and the assumed $\delta\le \tfrac15n^{-2}$, we have $\sup_{\cScmd{u,L/2}} w(x)\le-\tfrac1{10}$, which allows us to apply   Lemma \ref{C4Bound} to interior estimates \eqref{Pog0:3i} and that, for a positive constant $C_3$ that only depends on $n$,
 \be\label{Pog3:a}
 |\rD^3 w|_{\cC^{0,1/2} } \leq C_3\qqu{in}\cScmd{u,L/2}.
 \ee 
 By interpolation of these two estimates,
 we prove \eqref{Pog0:3ii}.

 Next, by  \eqref{vw:lem}, we have $u(z)-u(0)=u(z) - w(z)+w(z)- u(0 )\le n^2\delta\le  \tfrac15$. Then, by \eqref{est:vmin}, we have  $u(z)-u(0)\le L/2$ and in other words, $z$ is in the closure of ${{\cScmd{u,L/2}}}$. Thus, by Taylor expansion and \eqref{w:z:min}, 
\[
w(0) - w(z ) = \tfrac{1}{2}  z ^t \mathrm{D}^2w\big|_{\xi_1}z \qquad \text{for some} \quad \xi_1 \in  \cScmd{u,L/2},
\]
 where the left-hand side equals $w(0) - u(0)+u(0)- w(z )\le  n^2\delta$ due to \eqref{vw:lem}. Thus, by \eqref{Pog0:3i}, 
 \be\label{x0x1}
|z |^2 \leq 2 c_2^{-1}n^2\delta.
 \ee
Combining this with \eqref{Pog0:3i} and \eqref{w:z:min}, we prove \eqref{gradest}.
  
Next, by \eqref{est:vmin}, the assumption $\hc\le\tfrac15$ ensures \(\cScmd{u,h}\subset\cScmd{u,L/2}\). Also, by \eqref{vw:lem}, we have 
\[
\big|w(x) -h - w(z ) \big| =\big|w(x) - u(x) + u(0)-w(z )\big|\le n^2 \delta,\qquad\forall\,x\in\pa\cScmd{u,h}.
\] 
Perform Taylor expansion on $ w(x) - w(z )$ with a 2nd derivative remainder that is bounded using \eqref{Pog0:3i}, noting also \eqref{w:z:min},   to find $\tfrac12 { c_2}|x-z |^2\le h+ n^2\delta$. Combining this with  \eqref{x0x1}  gives
\be\label{T:exp:2}
\left.\begin{aligned}
&|x| \le \sqrt{2}\, c_2^{-\frac12}\big(\sqrt{h+n^2 \delta}+\sqrt{n^2 \delta}\big)\le  C_7\,h^{\frac12}\,,\;\; \qquad\forall\, x\in\pa\cScmd{u,h},\\
&\qquad\qquad\text{where}\quad C_7:=\sqrt{2}\, c_2^{-\frac12}\big(\sqrt{1+n^2 C_4}+\sqrt{n^2 C_4}\big)\qquad\text{(due to \eqref{cond:scaling2})}.
\end{aligned}\quad\right\}
\ee

Yet again by \eqref{vw:lem}, 
\[
\big|w(x)-h - w(0) \big| =\big|w(x) - u(x) + u(0)-w(0)\big|\le n^2 \delta,\qquad\forall\,x\in\pa\cScmd{u,h}.
\] 
Perform Taylor expansion on $ w(x) - w(0)$ with a 3rd derivative remainder that is bounded using \eqref{Pog3:a} to find
\[
\Big|\frac{h}{2}|Tx|^2-h\Big|=\Big|\frac12 x^t\,\mathrm{D}^2w(0)\,x-h\Big|\le  n^2\delta+ \big|\rD w(0)\cdot x\big| +\frac16C_3|x|^3, \qquad\forall\, x\in\pa\cScmd{u,h}.
\]
By \eqref{gradest} and \eqref{T:exp:2}, the right-hand side is bounded by 
\be\label{C3:effect}
\Big( n^2\delta/h+nC_2C_7\big({2  c_2^{-1}\delta/h}\big)^{\frac12}+\tfrac16C_3C_7^3 h^\frac12\Big)\cdot h,
\ee
which  proves the $\cB_{r_-}  \subset  \,T\,\cScmd{u,h}  \subset \cB_{r_+}$ part of \eqref{n:d:1} with $r_\pm$ satisfying \eqref{n:d:2}. By choosing a suitable constant $ \hc$ as the upper bound for $h$ and a suitable constant $C_4$ as the upper bound for $\delta/h$, we show the first and last inclusions of \eqref{n:d:1}.
  
 For the final part of the proof, first, since $\det^{\frac1n}T=1/\sqrt h$ due to $\det\rD^2w=1$, combining \eqref{vw:lem} and transform estimates \eqref{T:Dk:al:rs} proves
\be\label{uw0Linfty}
 \big|\cF_T(u-w) \big| \leq \tfrac12n^2\delta/h \qqu{in}T\cScmd{u,h}.
\ee
By affine invariance \eqref{T:D2}, we have 
\[
\det\rD^2\big(\cF_T\,u\big)=g\circ T^{-1}\qqua{and}\det\rD^2\big(\cF_T\,w^\sharp\big)=1\qqu{ in}T\cScmd{u,h}.
\]
 We then validate assumptions \eqref{v:cond}, \eqref{cond:scaling2} on $\cF_T\,u$ and $g\circ T^{-1}$ in the domain $T\cScmd{u,h}$. Since $T\cScmd{u,h}$ is quasi-normalised due to \eqref{n:d:1}, we apply  \eqref{vw:sn} on $\cF_T(u+L-h)$ and  $\cF_T\,w^\sharp$  to find 
\be\label{uw1Linfty}
\big| \cF_T(u -w^\sharp)-const\big|\leq   \tfrac12n^2\delta\qqu{in}T\cScmd{u,h}.
 \ee
Also, considering $\cF_Tw^\sharp$ in domain $T\cScmd{u,h}$, we apply  \eqref{Pog0:3i}, \eqref{Pog3:a} to obtain $C^{1,\alpha_0}$ estimates of $\rD^2(\cF_Tw^\sharp)$ in  $T\cScmd{u,h/2}$. Combining the original form of \eqref{Pog0:3i}, \eqref{Pog3:a} on $w$ with transform estimate \eqref{T:Dk:al:rs} also gives the corresponding $C^{1,\alpha_0}$ estimates of $\rD^2\cF_Tw$ in $T\cScmd{u,L/2}\supset T\cScmd{u,h/2}$. Noting  that  only the $\ell=2,3$ cases of \eqref{T:Dk:al:rs} was used and noting  
 $\det^{\frac1n}T=1/\sqrt h$ again, we find these (upper) bounds depend on non-negative powers of $h$ and hence can be relaxed to be independent of $h$. Finally, since the convexity of $u$ implies $T\cScmd{u,h/2}\supset\frac12T\cScmd{u,h}$, we use \eqref{n:d:1} to have $ T\cScmd{u,h/2}\supset\cB_{1/2}$. Therefore, we
apply Lemma \ref{lem:ol} to $\cF_T\,w,\cF_T\,w^\sharp$ in the domain $\cB_{1/2}$ together with  \eqref{uw0Linfty} and \eqref{uw1Linfty}, noting that  $\rD^2(\cF_{T}\,w)(0)=I$ by the chain rule, to prove \eqref{est:D2w1w0}, \eqref{est:D3w1w0}. 
  \end{proof}

\section{\bf Iterative Steps} 
\label{sec:iterativeproof}

By iterating what was done in Lemma \ref{Stepk} going from a section to a smaller section about the same point, we construct a sequence of shrinking sections of the MAE solution and obtain bounds on the eccentricity of the affine transforms that quasi-normalise them. 


 \medskip

\noindent{\bf Assumptions}. Suppose  $\Vo$ is strictly convex and $\cC^2$ in a closed section  $\overline{\cScmd{\Vo,h_0}}$ for  $h_0>0$, and
\be\label{Om0:prop}
\cB_1(\xi) \subset \cScmd{\Vo,h_0}\subset \cB_n(\xi)\qqu{ for some }\xi\in\bR^n.
\ee
Also suppose 
 \be\label{wat}
\detn\rD^2 \Vo=\Fo\qu{in} \cScmd{\Vo,h_0}\qqua{with}\rD \Vo(0)=\vec0\qua{and} \Fo(0)=1.
\ee 
For constants $\hc$, $C_4$ and $C_5$ from  Lemma \ref{Stepk}, define
\begin{align}
\label{def:Cmc}\Cmc&:=\min\left\{C_4\hc\,,\,\hc/(3C_5)\,,\,\tfrac15n^{-2}\right\},\\
\delta_0&:=\textstyle\sup_{ \cScmd{\Vo,h_0}} |\Fo-\Fo(0)|,\nonumber
\end{align}
and suppose
\be\label{de0:Cmc}
\delta_0\le\Cmc.
\ee

\noindent{\bf Definitions}. For constant $ \hc$ from  Lemma \ref{Stepk}, define
\begin{align*}
w_{0}:=\solM{\cScmd{\Vo,h_0}}\qqua{ and }w_{k}:={\textsf{Sol}}\big(\cScmd{\Vo,\hc^k}\big)\qquad\;\;\forall\,k\ge1.
\end{align*}
Then, iteratively define   transforms $\pp_0$, $T_k$ and $\pp_{k+1}$ for all $k=0,1,2,\ldots$ as follows:
\begin{align}
&\pp_{0}:=I\,,\nonumber\\
&T_k:=\hc^{-{1\over2}}\sqrt{\mathrm{D}^2(\cF_{\pp_k}w_k)(0)}\,,\label{def:Tk}\\[1mm]
&\pp_{k+1}:=T_k\,\pp_{k}\,.\nonumber
\end{align}
Apparently,  for all $k\ge0$,
\be\label{det:T:P}
\detn T_k=\hc^{-\frac12}\qqua{and}\detn \pp_{k+1}=\hc^{-\frac{k+1}2}.
\ee
Finally,  define
\be\label{dk:def}
\delta_{k}:=\textstyle\sup_{ \cScmd{\Vo,\hc^k}}\big|\Fo-\Fo(0)\big| =\textstyle \sup_{\, \pp_k\cScmd{\Vo,\hc^k}}\big|\Fo\circ\pp_k^{-1}-\Fo(0)\big| ,\qquad\forall\,k\ge1,
\ee
so that, by \eqref{de0:Cmc} and $\cScmd{\Vo,\hc^k}\subset \cScmd{\Vo,h_0/2}$ from Lemma \ref{Stepk}, we have
\be\label{dk:d0:bound}
\delta_k\le\Cmc,\qquad\forall\,k\ge0.
\ee

\begin{lemma}
\label{lem:iter}
Under the set-up so far in \S\,\ref{sec:iterativeproof} and  the same constants as in  Lemma \ref{Stepk},  we have
 \begin{align} 
 \label{Om:Bpm}
 \cB_1 \subset\pp_{k}\,\cScmd{\Vo,\hc^{k}}&
\subset  \cB_2\,,\qquad\forall\,k\ge1,
\end{align}
and 
 \be\label{pk:ratio}
\mat{T_k^{-1}}\le\tfrac65\sqrt{\hc}<1\,,\;\;\quad\qquad\forall\,k\ge1.
 \ee 
 We also have what will be referred to as ``compound estimates''
 \be\label{c:e} 
 \left.
 \begin{aligned}
&\mt{\br{\pp}_{k}}^2 \le C_2 M_{k}\qu{ \; and \;  } \mt{\br{\pp}_{k}^{-1}}^2 \le  c_2^{-1} M_{k}\\[1mm]
& \qu{for}\;\;
  M_{k}:=\exp\Big(1.12\,{C_5\over \hc}\,\sum_{j=0}^{k-2}\delta_{j}\Big),
\end{aligned}\;\;
\right\}\qquad\forall\,k\ge1,
 \ee
where the sum is understood as $0$ if $k=1$, making $M_1=1$.

Finally,
there exists a constant $C_7$ that only depends on $n$ so that, for all $k\ge0$,
\begin{align}
\label{est:Tww1:int}
\big|\rD^2(w_{k+1}-w_k)(0)\big|&\le C_2(M_{k+2}-M_{k+1}) ,\\
\label{est3:Tww1:int}\Big|\rD^3\big(\cF_{\pp_{k+1}}(w_{k+1}-w_k)\big)\Big|&\le 
 C_7\,\delta_k/\hc\qqu{in}\cB_{1/4}.
\end{align}
 \end{lemma}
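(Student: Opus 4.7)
My plan is an induction on $k \ge 0$ with Lemma \ref{Stepk} as the engine at each step, applied to the pair $u = \cF_{\pp_k}\Vo$, $g = \Fo \circ \pp_k^{-1}$ in the domain $\pp_k\cScmd{\Vo,\hc^k}$. A clean preliminary observation is that, by \eqref{det:T:P}, $\hc^k \cdot \detns \pp_k = 1$, so $\pp_k \cScmd{\Vo, \hc^k} = \cScmd{\cF_{\pp_k}\Vo, 1}$ and $\pp_k \cScmd{\Vo, \hc^{k+1}} = \cScmd{\cF_{\pp_k}\Vo, \hc}$, putting Lemma \ref{Stepk} in its canonical form $L = 1$, $h = \hc$ at every step. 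The hypotheses are verified inductively: the quasi-normalization of the domain is \eqref{Om0:prop} at $k = 0$ and the $k$-th case of \eqref{Om:Bpm} for $k \ge 1$; $\rD(\cF_{\pp_k}\Vo)(0) = \vec 0$ and $\det\rD^2(\cF_{\pp_k}\Vo)(0) = 1$ follow from $\pp_k 0 = 0$, the chain rule, and affine invariance \eqref{T:D2}; and $\delta_k \le \Cmc$ together with \eqref{def:Cmc} delivers $\delta_k \le \tfrac15 n^{-2}$, $\delta_k/\hc \le C_4$, and the crucial smallness $C_5 \delta_k/\hc \le 1/3$.

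The conclusions of Lemma \ref{Stepk} at step $k$ then give the $(k{+}1)$-th case of three target estimates. Output (i) puts $\pp_{k+1}\cScmd{\Vo, \hc^{k+1}} = T_k \pp_k \cScmd{\Vo, \hc^{k+1}}$ between $\cB_1$ and $\cB_2$, advancing \eqref{Om:Bpm}. The action property $\cF_{T_k}\cF_{\pp_k} = \cF_{\pp_{k+1}}$ converts \eqref{est:D3w1w0} directly into \eqref{est3:Tww1:int}. Output \eqref{est:D2w1w0}, combined with the computation $\rD^2(\cF_{T_k}(\cF_{\pp_k} w_k))(0) = I$ (which holds by the very definition of $T_k = \hc^{-1/2}\sqrt{\rD^2(\cF_{\pp_k} w_k)(0)}$ together with the chain-rule transport of Hessians under $\cF_T$), yields the near-identity bound
\begin{equation*}
\big|\rD^2(\cF_{\pp_{k+1}} w_{k+1})(0) - I\big| \le C_5 \delta_k/\hc,
\end{equation*}
which is precisely what primes the next iteration.

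From this near-identity, \eqref{pk:ratio} and \eqref{c:e} fall out. For $k \ge 1$, the eigenvalues of the symmetric positive-definite matrix $\rD^2(\cF_{\pp_k} w_k)(0)$ lie in $[1 - C_5 \delta_{k-1}/\hc,\, 1 + C_5 \delta_{k-1}/\hc] \subset [2/3,\, 4/3]$; hence $\mat{T_k^{-1}}^2 \le \hc/(1 - C_5 \delta_{k-1}/\hc)$, which via $\delta_{k-1} \le \Cmc$ is sharpened to \eqref{pk:ratio}. For the compound estimate, note $\br{\pp}_{k+1} = \br{T}_k \br{\pp}_k$ and $\br{T}_k = \sqrt{\rD^2(\cF_{\pp_k} w_k)(0)}$ since $\detn T_k = \hc^{-1/2}$. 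The base case $k = 1$ uses \eqref{Pog0:3i} applied to $w_0$, giving $\mt{\br{\pp}_1}^2 \le C_2 = C_2 M_1$ and $\mt{\br{\pp}_1^{-1}}^2 \le c_2^{-1} = c_2^{-1} M_1$. The inductive step uses the eigenvalue bounds together with $\ln(1+x) \le x$ and $-\ln(1-x) \le 1.12\,x$ on $x \in [0, 1/3]$ to obtain $\max\{\mt{\br{T}_k}^2, \mt{\br{T}_k^{-1}}^2\} \le \exp(1.12\, C_5 \delta_{k-1}/\hc) = M_{k+1}/M_k$; submultiplicativity of $\mt{\cdot}$ under composition then closes the induction.

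Finally, \eqref{est:Tww1:int} follows by pushing the bound $|\rD^2(\cF_{\pp_{k+1}}(w_{k+1} - w_k))(0)| \le C_5 \delta_k/\hc$ (from \eqref{est:D2w1w0}) back to the original coordinates via the chain-rule identity $\rD^2 v(0) = (\detn \pp_{k+1})^{-2} \pp_{k+1}^T \rD^2(\cF_{\pp_{k+1}} v)(0) \pp_{k+1}$; taking operator norms and inserting \eqref{c:e} gives $|\rD^2(w_{k+1} - w_k)(0)| \le C_2 M_{k+1} \cdot C_5 \delta_k/\hc \le C_2(M_{k+2} - M_{k+1})$, where the last step uses $M_{k+2} - M_{k+1} = M_{k+1}(e^{1.12\, C_5 \delta_k/\hc} - 1) \ge M_{k+1} \cdot C_5 \delta_k/\hc$ via $e^x - 1 \ge x$. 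The main obstacle I foresee is the bookkeeping of numerical constants — in particular ensuring the $6/5$ of \eqref{pk:ratio} and the $1.12$ in \eqref{c:e} are both consistent with $C_5 \delta/\hc \le 1/3$ — which requires the Taylor-type inequalities noted above; the structural induction itself is transparent once the near-identity ladder is set up.
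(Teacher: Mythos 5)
Your proposal is correct and follows essentially the same route as the paper: the same role assignments when invoking Lemma \ref{Stepk} inductively, the same near-identity bound on $\rD^2(\cF_{\pp_{k+1}}w_{k+1})(0)$ driving \eqref{pk:ratio} and \eqref{c:e} via eigenvalue analysis and submultiplicativity of $\mt{\cdot}$, and the same back-transform plus $e^x-1\ge x$ argument for \eqref{est:Tww1:int}, with \eqref{est3:Tww1:int} read off directly from \eqref{est:D3w1w0}. The only caveat is endpoint numerics: from $C_5\delta_{k-1}/\hc\le\tfrac13$ alone your stated inequalities yield roughly $\sqrt{3/2}\,\sqrt{\hc}$ and $-\ln(1-x)\le 1.22\,x$ rather than $\tfrac65\sqrt{\hc}$ and $1.12\,x$, but the paper's own derivation is equally tight at exactly this point, and the slack is harmlessly absorbed by taking $\Cmc$ slightly smaller (or the constants $\tfrac65$, $1.12$ marginally larger), with no downstream effect.
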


\begin{proof}
We prove \eqref{Om:Bpm} by induction. 
Fixing any $k\ge0$, suppose we have established the inductive hypothesis -- note when $k=0$, the inductive hypothesis is just \eqref{Om0:prop}. 
Invoke Lemma \ref{Stepk} where  the role of $\cScmd{u,L}$ is played by $\cScmd{\Vo,h_0}$ when $k=0$ and played by $\pp_k\,\cScmd{\Vo,\hc^k}$ when $k\ge1$, and 
\be\label{Stepk:roles}\begin{array}{rcccl}
\text{the roles of \; }&h\,,&\delta\,,&g\,,&u\\[1mm]
\text{are played by \; }&
\hc\,,  & {\delta_k}\,,  &  {\Fo\circ\pp_k^{-1}}\,, &  {\cF_{\pp_k}\Vo-const}\,.  
\end{array}
\ee
 Then, the definitions of $w_k,T_k$ means that 
\be\label{Stepk:roles:2}
\begin{array}{rcl}
\text{the roles of \; }&w\,,&T\qquad\\[1mm]
\text{are played by \; }& \cF_{\pp_k}w_k\, ,& T_k \,.\qquad
\end{array}
\ee   

 All assumptions in Lemma \ref{Stepk} are verified as follows.
  The quasi-normalisation assumption is satisfied due to the inductive hypothesis whereas assumption \eqref{v:cond} is satisfied due to \eqref{wat} and the roles of $u,g$ as assigned above. The definition of $\hc$ makes \eqref{cond:scaling1} satisfied whereas \eqref{dk:d0:bound} and \eqref{def:Cmc} make \eqref{cond:scaling2} satisfied.  
  With all assumptions verified and noting 
  \be\label{useful}
\text{the role of \; }\cS(u,h)\text{ \; is played by \; }\cS\big(\cF_{\pp_k}\Vo,\hc\big)=\pp_{k}\,\cS(\Vo,\hc^{k+1}) 
\ee
(the equality is due to property \eqref{T:sn} and determinant \eqref{det:T:P}), we invoke inclusions \eqref{n:d:1} on the above set to prove \eqref{Om:Bpm} for $\pp_{k+1}\cScmd{\Vo,\hc^{k+1}}$. The {\it{inductive step is complete}}.  

To prove the rest of the Lemma, let us use the same notations as within the above inductive step. 
Then, \eqref{useful} with the definition of $w_{k+1}$  implies
$\cF_{\pp_k}w_{k+1}$ plays the role of $\solM{\cS(u,h)}$, namely, $w^\sharp$ in  Lemma \ref{Stepk}, hence $\cF_{\pp_{k+1}}(w_{k+1}-w_k)$ plays the role of $\cF_T(w^\sharp-w)$. Then, we apply \eqref{est:D3w1w0}  to show  estimate \eqref{est3:Tww1:int} and apply  \eqref{est:D2w1w0} to show
\be\label{est:w1:I}
\Big|\rD^2(\cF_{\pp_{k+1}}w_{k+1}-\cF_{\pp_{k+1}}w_{k})(0)\Big|=\Big|\rD^2(\cF_{\pp_{k+1}}w_{k+1})(0)-I\Big|\le C_5\,\delta_k/\hc.
\ee   

Recalling definition \eqref{def:Tk} but for $T_{k+1}$ and using a straightforward eigenvalue analysis on \eqref{est:w1:I}, we have 
 $
 \mat{\br{T}_{k+1}-I}\le  \frac12 C_5\,\delta_k/\hc
 $
 which is further bounded by  $\frac16$ due to  \eqref{dk:d0:bound} and \eqref{def:Cmc}. Thus,  
\begin{alignat}{2}\label{Tk10:mat}
 \mt{\br{T}_{k+1}} &\le 1+ \tfrac12 C_5\,\delta_k/\hc \le \tfrac7{6} \qqua{and} \mt{\br{T}_{k+1}^{-1}} &\le \big(1- \tfrac12 C_5\,\delta_k/\hc\big)^{-1}\le \tfrac6{5}\,.
\end{alignat}
Combining it with $\hc\le\frac15$ from  Lemma \ref{Stepk}, we  show    \eqref{pk:ratio}. Also, by Taylor expansion, we have
 \[\begin{aligned}
 \ln\!\big(\mt{\br T_{k+1}}^2\big)&\le 2\ln(1+ \tfrac12 C_5\,\delta_k/\hc) <C_5\,\delta_k/\hc, \\
 \ln\!\big(\mt{\br T_{k+1}}^2\big)&\le 2\ln(1- \tfrac12 C_5\,\delta_k/\hc)^{-1} <1.12\,C_5\,\delta_k/\hc.
 \end{aligned} 
\] 
 Since $\mt{\br{T}_0}^2\le{C_2}$ and $\mt{\br{T}_0^{-1}}^2\le  c_2^{-1}$ due to \eqref{Pog0:3i}, this proves \eqref{c:e}.
  
To prove \eqref{est:Tww1:int}, recall estimates  \eqref{est:w1:I}  with its right-hand side further bounded by 
\[
C_5\,\delta_k/\hc\le \ln\big(\tfrac{M_{k+2}}{M_{k+1}}\big)\le\tfrac{M_{k+2}}{M_{k+1}}-1.
\]
Then, substituting it into  \eqref{est:w1:I}, applying transform estimates \eqref{T:Dk:al:rs} and  combining the result with  \eqref{c:e} applied on $\pp_{k+1}$ proves \eqref{est:Tww1:int}.
\end{proof}
 
  \subsection{Consequences of Dini continuity}\label{sec:C2}
 $\;$ 

When $f$ is Dini continuous, estimate \eqref{est:Tww1:int} allows us to approximate $\rD^2\Vo(0)$ using sequence $\{\rD^2 w_k(0)\}$, leading to an explicit upper bound on $\rD^2\Vo(0)$. For conciseness however, we devise the next lemma so that it requires an assumption, namely \eqref{Dini:alt:as}, that is not exactly Dini continuity but rather a consequence of it. This latter link will be established in \S \ref{sec:mainproof}.

\begin{lemma}\label{lem:Dini}
Under the set-up of Iteration Lemma \ref{lem:iter}, if further
 \be\label{Dini:alt:as}
 M_\infty:=\displaystyle\lim_{k\to\infty}M_k<\infty,
 \ee
 then
 \be
\label{Dini:cons:1}\big|{\rD^2 \Vo(0)}-\rD^2w_k(0)\big|<C_2\,(M_\infty-M_{k+1}),\qquad\forall\,k\ge0,
\ee
and
\be\label{Dini:cons:2}
\big|\rD^2 \Vo(0)\big|\le C_2\,M_\infty .
\ee
\end{lemma}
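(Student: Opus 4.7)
My plan is a two-step reduction: first show the sequence $\{\rD^2 w_k(\vec 0)\}$ is Cauchy with the asserted rate, then identify its limit with $\rD^2\Vo(\vec 0)$. The first step is pure telescoping of the per-step bound \eqref{est:Tww1:int}, namely $|\rD^2(w_{j+1}-w_j)(\vec 0)|\le C_2(M_{j+2}-M_{j+1})$. Since $\{M_k\}$ is non-decreasing with $M_\infty<\infty$ by hypothesis \eqref{Dini:alt:as}, summing over $j=k,\dots,m-1$ gives
\[
\big|\rD^2(w_m-w_k)(\vec 0)\big|\le C_2(M_{m+1}-M_{k+1})\qquad\text{for }m>k\ge 0,
\]
so $\{\rD^2 w_k(\vec 0)\}$ converges to some symmetric matrix $L$ with $|L-\rD^2 w_k(\vec 0)|\le C_2(M_\infty-M_{k+1})$.

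To identify $L=\rD^2\Vo(\vec 0)$, I would compare $\Vo$ with $w_k$ through their quasi-normalised representatives on the domain $\pp_k\,\cS(\Vo,\hc^k)$, which by \eqref{Om:Bpm} lies between $\cB_1$ and $\cB_2$. Both $\cF_{\pp_k}(\Vo-\Vo(\vec 0)-\hc^k)$ and $\cF_{\pp_k}w_k$ vanish on the boundary and solve MAE with right-hand sides $\Fo\circ\pp_k^{-1}$ and $1$ respectively, differing by at most $\delta_k$ in $L^\infty$. The key ingredient is an interior $C^2$ estimate at $\vec 0$ yielding
\[
\big|\rD^2\cF_{\pp_k}(\Vo-w_k)(\vec 0)\big|\le C\,\delta_k
\]
for a dimensional constant $C$. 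Pulling this back through the chain-rule identity $\rD^2(\cF_{\pp_k}u)(\vec 0)=\hc^{-k}\pp_k^{-T}\rD^2 u(\vec 0)\pp_k^{-1}$ (valid because each $\pp_k$ is linear with $\pp_k\vec 0=\vec 0$ and $\det^{2/n}\pp_k=\hc^{-k}$) and invoking the compound bound $\mt{\br\pp_k}^2\le C_2 M_k\le C_2 M_\infty$ from \eqref{c:e} gives $|\rD^2(\Vo-w_k)(\vec 0)|\le C\,C_2\,M_\infty\,\delta_k$. Since $M_\infty<\infty$ forces $\delta_k\to 0$, this vanishes as $k\to\infty$, identifying $L$ with $\rD^2\Vo(\vec 0)$ and proving \eqref{Dini:cons:1}. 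Then \eqref{Dini:cons:2} follows at once: Pogorelov's bound \eqref{Pog0:3i} applied to $w_0$ on $\cS(\Vo,h_0/2)\ni\vec 0$ gives $|\rD^2 w_0(\vec 0)|\le C_2=C_2 M_1$, and combining with the $k=0$ case of \eqref{Dini:cons:1} yields $|\rD^2\Vo(\vec 0)|\le C_2 M_1+C_2(M_\infty-M_1)=C_2 M_\infty$.

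The hard part will be securing the displayed interior $C^2$ estimate, since Lemma \ref{lem:ol} only addresses unit-RHS solutions. The natural extension is to subtract the two MAEs via Jacobi's formula, exactly as in the proof of Lemma \ref{lem:ol}, producing a linear uniformly elliptic equation for $\cF_{\pp_k}(\Vo-w_k)$ whose forcing term is now the $\det^{1/n}$-discrepancy $\Fo\circ\pp_k^{-1}-1$ of size $\delta_k$. Because $\delta_k\le\Cmc$ is small, Pogorelov's estimates supply uniform ellipticity and $C^\alpha$ control of the coefficients, so Schauder interior estimates at $\vec 0$ (which sits at distance at least $1$ from the boundary) deliver the required $O(\delta_k)$ bound. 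Everything else in the argument is bookkeeping already provided by the iterative machinery developed in \S\,\ref{sec:iterativeproof}.
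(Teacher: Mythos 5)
Your telescoping of \eqref{est:Tww1:int} and the final bookkeeping (Pogorelov bound \eqref{Pog0:3i} for $w_0$ plus the $k=0$ case of \eqref{Dini:cons:1}) match the paper and are fine. The gap is in your identification of the Cauchy limit with $\rD^2\Vo(0)$, which you base on the claimed interior estimate $\big|\rD^2\cF_{\pp_k}(\Vo-w_k)(0)\big|\le C\,\delta_k$ obtained by linearising via Jacobi's formula and applying Schauder. This does not work. First, Schauder interior estimates control $\rD^2 u$ by the $\cC^\alpha$ (or at least Dini) norm of the forcing, not by its supremum: here the forcing of the linearised equation is $(\Fo\circ\pp_k^{-1})^n-1$, which is merely continuous and only known to be small in $L^\infty$; no pointwise second-derivative bound of size $O(\delta_k)$ follows from $L^\infty$-small data (this is precisely the false ``$p=\infty$'' endpoint). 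Second, the coefficients $\int_0^1\mathrm{adj}(\rD^2 w_t)\,\rd t$ of that linearised equation interpolate between $\rD^2(\cF_{\pp_k}w_k)$ and $\rD^2(\cF_{\pp_k}\Vo)$, and for the latter you have no a priori ellipticity or $\cC^\alpha$ bounds -- bounding $\rD^2\Vo$ is exactly what the lemma is trying to achieve, so the argument is circular. This is why Lemma \ref{lem:ol} is stated only for two solutions with \emph{unit} right-hand side and with assumed two-sided Hessian bounds and $\cC^\alpha$ control. Moreover, if your estimate held with a dimensional constant, then its $k=0$ case would give $|\rD^2\Vo(0)|\le C_2+C\,\Cmc$, a $\cC^{1,1}$ bound depending only on the sup-norm oscillation of the right-hand side and not on its Dini modulus; that would make hypothesis \eqref{Dini:alt:as} superfluous and contradicts the known necessity of Dini continuity for interior $\cC^{1,1}$ regularity, so the claimed one-step Hessian comparison cannot be true in that form.

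The paper identifies the limit by a softer, purely geometric argument that avoids any elliptic estimate with non-constant right-hand side: since $\mat{T_k^{-1}}\le\frac65\sqrt{\hc}$, the sections $\cScmd{\Vo,\hc^k}$ shrink to the point $0$, so the qualitative $\cC^2$ regularity of $\Vo$ gives the vanishing-oscillation statement \eqref{D2u:o1}; because $\delta_{K}\to0$, Lemma \ref{Stepk} can be re-invoked at a much smaller height $h=\hc^{2\qq+1}$ (condition \eqref{cond:scaling2} being satisfiable), and \eqref{n:d:1}--\eqref{n:d:2} show the corresponding sections of $\cF_{\pp_{K+1}}\Vo$ are nearly balls of radius $\sqrt2\,\hc^{\qq}$; a Taylor expansion combined with \eqref{D2u:o1} then yields $\rD^2(\cF_{\pp_{K_\qq+1}}\Vo)(0)\to I$ along a subsequence, and together with \eqref{est:w1:I} (so $\rD^2(\cF_{\pp_{k+1}}w_{k+1})(0)\to I$) and the uniform compound bound \eqref{c:e} this pins the limit of $\rD^2 w_k(0)$ to $\rD^2\Vo(0)$. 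If you want to keep your structure, you need to replace your Schauder step by an argument of this type (or some other device that uses only the qualitative continuity of $\rD^2\Vo$ and the geometry of sections), since no quantitative elliptic estimate with merely $L^\infty$-small forcing can deliver the pointwise Hessian comparison you invoke.
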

\begin{proof}

We will use without reference the $\ell=2$, $\alpha=0$ version of \eqref{T:Dk:al:rs}. 

Since  $\{M_k\}$ is an increasing sequence, estimate \eqref{est:Tww1:int} and assumption \eqref{Dini:alt:as} imply 
  $\big\{\rD^2 w_k(0)\big\}$ is a Cauchy sequence and it remains to show the limit  of this Cauchy sequence is  $\rD^2 \Vo(0)$.

In fact, since  $\diam(\cScmd{\Vo,\hc^{k}})\le 2\mat{\pp_{k}^{-1}}$ due to \eqref{Om:Bpm}, we use \eqref{pk:ratio} to deduce  
\be\label{diam:lim}
\diam(\cScmd{\Vo,\hc^{k}})=o(1)\qqu{as}k\to\infty.
\ee Then, by the $\cC^2$ regularity of $\Vo$,  the oscillation of $\rD^2\Vo$ in the domain $\cScmd{\Vo,\hc^{k}}$ vanishes as $k\to\infty$. Thus, in view of \eqref{Dini:alt:as} and \eqref{c:e}, we have  
 \be\label{D2u:o1}
\textstyle\sup_{\pp_{k}\,\cScmd{\Vo,\hc^{k}}} \big|\rD^2 (\cF_{\pp_{k}}\Vo)(x)-\rD^2 (\cF_{\pp_{k}}\Vo)(0)\big|=o(1)\qqu{as}k\to\infty.
 \ee 
 Second, by \eqref{dk:def} and \eqref{diam:lim}, we have $\delta_{k} \to0$ which means, for any integer $\qq\gg1$ we can find large enough integer $K_i$ so that
 \be\label{choose:ell}
  \delta_{K_i}\le C_4\, \hc^{3\qq} \,.
 \ee
Using $K=K_i$ for brevity, we invoke  Lemma \ref{Stepk} with 
\[\begin{array}{rccccl}
\text{the roles of \; }&\cScmd{u,L}\,,&h\,,&\delta\,,&u\,,&w\,,\\[1mm]
\text{played by \; }&\pp_K\cScmd{\Vo,\hc^K}\,,&
\hc^{2\qq+1}\,,  & {\delta_{K}}\,,  &  {\cF_{\pp_K}\Vo-const}\,,&\cF_{\pp_K}w_{K}\,. 
\end{array}
\] 
Most assumptions of Lemma \ref{Stepk} are validated in the same way as in the proof of Lemma \ref{lem:iter} except that it is \eqref{choose:ell} that implies $\delta=\delta_K$ satisfies assumption \eqref{cond:scaling2}. The role of  $T$ of Lemma \ref{Stepk} is played by 
$\big(\hc^{2i+1}\big)^{-\frac12} \sqrt{\mathrm{D}^2(\pp_Kw_{K})(0)}=\hc^{-i}\,{T}_{K}$ (recall definition \eqref{def:Tk} for $T_K$).  
Then,  recalling \eqref{T:sn} and \eqref{det:T:P}, 
we have that the role of $T\cS(u,h)$ is played by   
\[
\hc^{-i} \,{T}_{K}\cS(\cF_{\pp_K}\Vo\,,\,\hc^{2\qq+1})=\hc^{-i} \,\cS(\cF_{\pp_{K+1}} \Vo, \hc^{2i}) .
\]
Then, by inclusions \eqref{n:d:1} and estimates \eqref{n:d:2}, we obtain
\[
\begin{aligned}   
\hc^i\,\cB_{r_{-}} \subset \cS(\cF_{\pp_{K+1}}\Vo,  \hc^{2i})&\subset \hc^i\,\cB_{r_{+}}
\end{aligned}
\]
with $\big|r_\pm^2-{2}\big|=O(\hc^{{\qq}})$ due to \eqref{choose:ell} and the role of $h$ assigned above. Therefore, for any unit vector $\ve\in\bR^n$, there exists a scalar $r'$ so that  $(r')^2=2+O(\hc^{{\qq}}) $ and 
   $r'\hc^i\ve$ is on the boundary of the above middle set. Then, by  Taylor expansion and \eqref{D2u:o1}, recalling $K=K_i$, we find
\begin{equation*}\begin{aligned}
\hc^{2i}
&=\tfrac12\big(2+O(\hc^{{\qq}})\big)\,(\hc^{i}\ve)^{t}\big(\rD^2 (\cF_{\pp_{K_i+1}}\Vo)(0)+o(1)\big)(\hc^{i}\ve) \qqu{as}i\to\infty.\end{aligned}
\end{equation*}
Since this holds for any unit vector $\ve$, we have
\(
\rD^2  \big(\cF_{\pp_{K_i+1}}\Vo\big)(0)-I=o(1)
\) as $i\to\infty$.
 Then, in view of \eqref{est:w1:I} with $\delta_k\to0$ and the uniform bound \eqref{Dini:alt:as}, we prove that  the Cauchy sequence $\big\{\rD^2  w_k(0)\big\}$ indeed tends to $\rD^2 \Vo(0)$, hence completing the proof of \eqref{Dini:cons:1}.  Then, combining the $k=0$ case of \eqref{Dini:cons:1} and the upper bound in \eqref{Pog0:3i} applied to $w_0$ proves the  $\cC^2$ estimate \eqref{Dini:cons:2}.
\end{proof}

 \section{\bf Estimates on the First Section}\label{sec:first:step}
 Since the Iteration Lemma \ref{lem:iter} requires in \eqref{de0:Cmc} small oscillation of the right-hand side function over the first section in the sequence, we now focus on the existence of such first section in the interior of the domain. We also prove estimates on the affine transform that quasi-normalises the first section, which will be needed in piecing together the proof of the Main Theorem (see \eqref{est:t:tau}). Caffarelli's strict convexity result (\cite{LC4}) lays the foundation of the following proof. 

 
\begin{lemma}\label{lem:strict}
Suppose $\Om \subset \bR^n$ is a bounded, open and convex set satisfying $\cB_1 \subset \Om \subset \cB_n$ and let $v \in \cC^2(\Om)\cap \cC^0(\overline{\Om})$ be a convex solution to   \eqref{MA1}, \eqref{MA2} with $1\le f\le \fm  $ in $\Om$ for constant $\fm$. Then, for any fixed $0< \rhob <\frac12\diam(\Om)$, there exists constants $H=H(n,\fm,\rho)>0$ and
positive $\theta=\theta(n,\fm)<1$ 
 so that for every integer $j\ge0$, 
  \begin{equation}\label{first:diam}
\cS(v,2^{-j}H,\xs)\subset \cB_{\theta^j\rho}(\xs)\subset\Om,\qquad\forall\,\xs\in\Omrho.
 \end{equation}
 \end{lemma}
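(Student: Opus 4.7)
For the base case $j=0$, I would establish the existence of $H=H(n,\fm,\rho)>0$ such that $\cS(v,H,\xs)\subset\cB_\rho(\xs)$ for every $\xs\in\Omrho$; note first that the second inclusion $\cB_{\theta^j\rho}(\xs)\subset\Om$ in the lemma is immediate from $\xs\in\Omrho$ and $\theta^j\rho\le\rho<\dist(\xs,\pa\Om)$. I argue by contradiction and compactness. If the claim failed, there would be sequences $H_k\downarrow 0$ and counterexamples $(\Om_k,v_k,f_k,\xs_k,z_k)$ with $\Om_k$ quasi-normalized, $1\le f_k\le\fm$, $\xs_k\in(\Om_k)_\rho$, $z_k\in\cS(v_k,H_k,\xs_k)$, and $|z_k-\xs_k|\ge\rho$. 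Along a subsequence, stability of Alexandrov solutions (see, e.g., \cite[Ch.\,2]{AFB2}) gives Hausdorff convergence $\Om_k\to\Om_\infty$ and locally uniform convergence $v_k\to v_\infty$ to an Alexandrov solution with $\rd z\le\cM v_\infty\le\fm^n\,\rd z$ and $v_\infty|_{\pa\Om_\infty}=0$, together with $\xs_k\to\xs_\infty$, $z_k\to z_\infty$, $\xs_\infty$ at distance $\ge\rho$ from $\pa\Om_\infty$, and $|z_\infty-\xs_\infty|\ge\rho$. Caffarelli's strict convexity (cf.\ \eqref{mm:strict}) together with interior $C^{1,\alpha}$ regularity \cite{C:C1alpha} make $v_\infty$ strictly convex and $C^1$ at $\xs_\infty$, so $\nabla v_k(\xs_k)\to\nabla v_\infty(\xs_\infty)$; the limit of the section condition $v_k(z_k)-v_k(\xs_k)-(z_k-\xs_k)\cdot\nabla v_k(\xs_k)<H_k\to 0$ then contradicts the strict inequality $v_\infty(z_\infty)-v_\infty(\xs_\infty)-(z_\infty-\xs_\infty)\cdot\nabla v_\infty(\xs_\infty)>0$ forced at $\xs_\infty\ne z_\infty$ by strict convexity.

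For the inductive step, given $\Om':=\cS(v,2^{-j}H,\xs)\subset\cB_{\theta^j\rho}(\xs)\subset\Om$, I set $\tilde v(z):=v(z)-v(\xs)-(z-\xs)\cdot\nabla v(\xs)-2^{-j}H$, which solves the same MAE on $\Om'$, satisfies $\tilde v=0$ on $\pa\Om'$, and has $\xs$ as its unique minimum. I then apply a \emph{one-step shrinkage lemma}: there exists $\theta=\theta(n,\fm)\in(0,1)$ such that for any MAE solution $(u,\Om')$ with $1\le(\det \rD^2 u)^{1/n}\le\fm$ and $u|_{\pa\Om'}=0$, with minimum point $y_0\in\Om'$,
\[
\cS(u,\tfrac12|u(y_0)|,y_0)\subset y_0+\theta(\Om'-y_0).
\]
This lemma is proved by reducing via John's Lemma \ref{lem:J} and affine invariance \eqref{T:D2} to the quasi-normalized case $\cB_1\subset\Om'\subset\cB_n$, where Corollary \ref{cor:est:com} confines $|u(y_0)|\in[\tfrac12,c_2(n,\fm)]$; a compactness/contradiction argument identical in structure to the base case shows that in the limit the compact containment $\cS(u_\infty,\tfrac12|u_\infty(y_{0,\infty})|,y_{0,\infty})\subset\subset\Om'_\infty$ (ensured by Caffarelli strict convexity and $u_\infty|_{\pa\Om'_\infty}=0$) translates, via the Minkowski functional of $\Om'_\infty$ at $y_{0,\infty}$, into a uniform scaled-contraction bound with $\theta(n,\fm)<1$. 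Applying the lemma to $(\tilde v,\Om')$ yields
\[
\cS(v,2^{-j-1}H,\xs)\subset\xs+\theta(\Om'-\xs)\subset\xs+\theta\,\cB_{\theta^j\rho}=\cB_{\theta^{j+1}\rho}(\xs),
\]
closing the induction.

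The main obstacle is the compactness argument shared by the base case and the one-step lemma: passing to the limit in the section inequality requires gradient convergence $\nabla v_k(\xs_k)\to\nabla v_\infty(\xs_\infty)$ at an interior point, which hinges on Caffarelli's interior $C^{1,\alpha}$ theory being applicable uniformly along a sequence of MAE solutions with only $L^\infty$ right-hand sides. Given this, the remaining affine-invariant bookkeeping via \eqref{T:sn}, \eqref{T:D2}, and Corollary \ref{cor:est:com} is routine.
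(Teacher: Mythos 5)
Your proposal is essentially correct, and its base case follows the paper's own compactness-and-contradiction scheme, but the inductive step takes a genuinely different route, so a comparison is in order. For the base case, the paper records the section inequality along the \emph{whole segment} from $\xs_i$ to a point $z_i\in\pa\cB_\rho(\xs_i)$, extracts only a subsequential limit $p_\infty$ of the gradients $\rD v_i(\xs_i)$ (bounded via \eqref{Lipbound}), and concludes that $v_\infty$ coincides with a supporting affine function along a segment, contradicting the strict convexity furnished by \eqref{mm:strict}. This removes the step you flag as the main obstacle: one never needs $\rD v_k(\xs_k)\to\rD v_\infty(\xs_\infty)$, hence no appeal to Caffarelli's interior $\cC^{1,\alpha}$ theory \cite{C:C1alpha}; any limit point of the gradients is automatically a subgradient of $v_\infty$ at $\xs_\infty$, which is all the contradiction requires, and it also sidesteps the case $z_\infty\in\pa\Om^\infty$, where your pointwise ``strict inequality at $z_\infty$'' is not directly available because strict convexity is only interior (your version is repairable by looking at interior points of the segment, but the paper's formulation avoids the issue altogether). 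For the inductive step, your one-step shrinkage lemma is precisely the paper's claim \eqref{sec:ratio}, but the paper proves it directly and quantitatively: after normalising the section, the Alexandrov maximum principle (Lemma \ref{lem:max}) bounds $\dist(y_{\frac12},\pa\Om')$ from below, while the comparison principle (Lemma \ref{lem:comp}) bounds $|\xs-y_1|$ from above and forces $h\ge\frac12$, giving an explicit $\theta(n,\fm)<1$ with no second compactness argument. Your compactness proof of the one-step lemma is viable (escaping points $z_k$ with Minkowski gauge tending to $1$ would converge to a boundary point where $u_\infty=0$, contradicting $u_\infty\le\frac12\min u_\infty<0$ there, using the uniform global H\"older bound of Corollary \ref{cor:sep} to pass to the limit up to the boundary), but it produces a non-explicit $\theta$ and duplicates heavy machinery where a two-line barrier argument suffices; the paper's route is the more elementary one at this step. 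One small correction: Corollary \ref{cor:est:com} assumes $\sup_\Om|f-1|<1$, so it cannot be used to confine $|u(y_0)|$ for general $\fm$; instead compare with the paraboloids $\frac{\fm}{2}(|x|^2-n^2)$ and $\frac12(|x|^2-1)$ via Lemma \ref{lem:comp}, exactly as in the paper's proof, to get $\frac12\le|u(y_0)|\le\frac12 n^2\fm$.
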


 The proof is inspired by techniques from \cite[Lem. 5.4.1]{CG1}, \cite[Lem. 4.12]{AFB2}.

\begin{proof}
  
We first prove the $j=0$ case of \eqref{first:diam}. Suppose not.
 Then there exist a sequence of convex domains $\{\Om^i\}$ for large integers $i$. Each domain is sandwiched by $\cB_1$ and  $\cB_n$, and  there exists a $\cC^2$ convex function $v_i$ defined on $\Om^i$ that satisfies $1\le \detn \mathrm D^2 v_i\le \fm$. Also there exists a point $\xs_i\in(\Om^i)_\rho$ and a point $z_i\in\pa\cB_{\rhob}(\xs_i)$ so that
\begin{equation}\label{supposenot}
0\le v_i(y)-v_i(\xs_i)-\rD v_i(\xs_i)\cdot(y-\xs_i)<i^{-1},\qu{for}y=t\,\xs_i+(1-t)z_i\,,\quad\forall\,t\in(0,1).
\end{equation}
The  Blaschke selection theorem ensures the compactness of $\{\Om^i\}$ in Hausdorff metric, say $\Om^i\to\Om^\infty$ which is a convex set also sandwiched by $\cB_1$ and  $\cB_n$. Combining the global H\"older estimate of Corollary \ref{cor:sep} and the Arz\'ela-Ascoli lemma gives the compactness of $\{v_i\}$ on $\overline{\Om^k}\cap\overline{\Om^{k+1}}\cap\cdots$ for any single integer $k$. Through a standard diagonalisation technique, we can find a subsequence, also named $\{v_i\}$ for simplicity, that converges uniformly to a convex function $v_\infty$ on {\it all} compact subsets of $\Om^\infty$. Corollary \ref{cor:sep} also implies $v_\infty$ is continuous at and vanishes on $\pa\Om^\infty$. Moreover, standard stability result (e.g., \cite[Lem. 1.2.3]{CG1}, \cite[Prop. 2.6]{AFB2}) ensures the weak-* convergence of $\det \mathrm D^2v_i(x)\mathrm dx=\cM v_i$  to the the Monge-Amp\`ere measure $\mathcal M v_\infty$ in $\Om^\infty$. Thus $\mathrm dx\le {\mathcal M}v_\infty\le  \fm^n \rd x$ in $\Om^\infty$. By \eqref{mm:strict}, $v_\infty$ is strictly convex in the interior of $\Om^\infty$.

On the other hand, $\{\xs_i\}$, $\{z_i\}$ are apparently compact. Also, by the Lipschitz bound \eqref{Lipbound}, we have that $\{\nabla v_i( \xs_i)\}$ are compact with a subsequence converging to $p_\infty$. Then, taking the limit of \eqref{supposenot} shows exists a point $\xs_\infty\in(\Om^\infty)_\rho$ and a point $z_\infty\in\pa\cB_{\rhob}(\xs_\infty)$ so that
\[
 v_\infty(y)-v_\infty(\xs_\infty)-p_\infty\cdot(y-\xs_\infty)=0,\qu{for}y=t\,\xs_\infty+(1-t)z_\infty,\quad\forall\,t\in(0,1).
\]
 which contradicts the {\it strict} convexity of $v_\infty$. Therefore, we have proven the $j=0$ case of \eqref{first:diam}.

In proving the $j\ge1$ cases of \eqref{first:diam}, we first claim that for the same constant $\theta$ and any $h\in(0, H]$,
 \begin{equation}\label{sec:ratio}
 \begin{aligned}
\frac{|\xs-y_{\frac12}|}{|\xs-y_{1}|} \le  \theta,\qquad\forall& \,\text{co-linear points $\xs,y_{1},y_{\frac12}$ with }y_{1}\in\pa\cS(v,h,\xs)\\
&\text{ and } y_{\frac12}\in\pa\cS(v,\tfrac12 h,\xs)\text{ between }\xs,y_{1}.
\end{aligned}
\end{equation}
In fact, the ratio on the left-hand side is apparently invariant under invertible affine transforms. Affine invariance of MAE means that  the assumptions on $\xs,y_{1},y_{\frac12}$ in \eqref{sec:ratio} as well as the range $f\in[1,\fm]$ also remain unchanged up to a harmless redefinition of $h$. So, it suffices to prove \eqref{sec:ratio} in a normalised domain  $\Om'=\cS(v,h,\xs)$ with $v\big|_{\pa\Om'}=0$.   Then, by maximum principle Lemma \ref{lem:max},
\begin{align*}
|y_{\frac12}-y_{1}|\ge\dist(y_{\frac12},\pa\Om')&\ge \frac{h^n2^{-n}}{C_{\!\mathsf A}\diam(\Om')^{n-1}\int_{\Om'}f^n}.
\end{align*}
Next, the comparison principle Lemma \ref{lem:comp} implies $\frac12(|y|^2-n^2)\fm\le v(y)$ for any $y\in\Om'$, and thus noting $v(\xs)=-h$ gives $|\xs|^2\le n^2-2h/\fm$, which implies
\[
|\xs-y_{1}|\le n+(n^2-2h/\fm)^\frac12.
\]
By Lemma \ref{lem:comp}, we also have $\frac12(|y|^2-1)\ge v(y)$ for any $y\in\Om'$, and thus $h=-\min_{\Om'}v\ge \frac12$. 
Therefore, gathering all the estimates so far gives
 \begin{align*}
 \frac{|y_{\frac12}-y_{1}|}{|\xs-y_{1}|}&\ge \frac{2^{-2n}}{C_{\!\mathsf A}(2n)^{n-1}|\cB_n|\fm^n (n+(n^2-\fm^{-1})^\frac12)}\stackrel{def}=1-\theta.
 \end{align*} Hence, by co-linearity of $\xs,y_{1},y_{\frac12}$ and that $y_{\frac12}$ is between $\xs,y_{1}$, we have 
${|\xs-y_{\frac12}|}/{|\xs-y_{1}|}$ bounded from above by $\theta=\theta\big(n,\fm \big)$, 
whence proving \eqref{sec:ratio}.  

Since the argument below \eqref{sec:ratio} shows that it also applies to  the un-normalised section $\cS(v,2^{-j}H,\xs)$, we  prove \eqref{first:diam} by combinging  \eqref{sec:ratio} with the previously proven $\cS(v,h,\xs)\subset \cB_\rho(\xs)$. 
\end{proof}

\noindent{\bf Remark.} The above lemma is of interior estimates. If point $\xs$ were allowed to be arbitrarily close to the boundary, then in the first half of the proof, $\xs_\infty,z_\infty$ could possibly be situated on $\pa\Om^\infty$ and since there is no a priori information on the curvature of the boundary, the part of $\pa\Om^\infty$ between them could be possibly a straight segment, which would make the contradiction (to strict convexity) approach fail to work.

 \begin{lemma}\label{lem:sec:size}
Consider convex domain $\Om\subset\bR^n$ and  let $v \in \cC^2(\Om)\cap \cC^0(\overline\Om)$ be a convex solution to   $\detn\rD^2 v=f$ over $\Om$. 
 Consider a section  $\cS(v,h,\xs) \subset\Om$ and suppose 
 \(
 1\le f\le \fm 
 \)  in $\cS(v,h,\xs)$ for constant $\fm$.   Suppose an affine transform $T$  satisfies   
 \begin{equation}\label{qn:tr}
 \cB_{r_1}(\xi_1)\subset T\cS(v,h,\xs)\subset \cB_{r_2}(\xi_2)\,,
 \end{equation} for some $0<r_1\le r_2$ and $\xi_1,\xi_2\in\bR^n$. Then, 
 \begin{align}\label{est:det:A}
 \frac 12  r_1^{2} \le (\detns T)\, h \le \frac 12 r_2^2\fm\,,\\
\label{diam:up}   \frac{r_1}{r_2}\le \frac{(8h)^{1\over2}\mt{\br{T}^{-1}}}{ \diam(\cS(v,h,\xs) )} \le  \frac{r_2}{r_1}\fm^{\frac 1{2}}\,,\\[1mm]
\label{est:T1}  \mat{{T}^{-1}}\le \diam\big(\cS(v,h,\xs)\big)/{2r_1}\,.
  \end{align}  
 \end{lemma}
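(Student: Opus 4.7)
The plan is to pull back everything via the affine transform $T$, invoking the affine invariance \eqref{T:D2} of the MAE together with the section commutation property \eqref{T:sn}. Setting $\tilde v := \cF_T v$ and $\tilde h := (\detns T)\,h$, we have $T\cS(v,h,\xs) = \cS(\tilde v,\tilde h,T\xs)$ with $1 \le \detn\rD^2\tilde v \le \fm$ on this transformed section. The convex function $\tilde\phi(y) := \tilde v(y) - \tilde v(T\xs) - (y-T\xs)\cdot\rD\tilde v(T\xs)$ then satisfies $\tilde\phi \ge 0$, $\tilde\phi(T\xs) = 0$, and $\tilde\phi = \tilde h$ on $\pa\cS(\tilde v,\tilde h,T\xs)$; this is the object I would compare against explicit quadratic barriers.

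For \eqref{est:det:A}, I would apply the comparison principle (Lemma~\ref{lem:comp}) twice. For the lower bound $\tfrac12 r_1^2 \le \tilde h$, compare $\tilde\phi - \tilde h$ with $Q_1(y) := \tfrac12(|y-\xi_1|^2 - r_1^2)$ on $\cB_{r_1}(\xi_1) \subset \cS(\tilde v,\tilde h,T\xs)$: one has $\det \rD^2 Q_1 = 1 \le \det\rD^2 \tilde\phi$ and $Q_1 = 0 \ge \tilde\phi - \tilde h$ on $\pa \cB_{r_1}(\xi_1)$, giving $Q_1 \ge \tilde\phi - \tilde h$ inside, and evaluating at $y=\xi_1$ yields $-\tfrac12 r_1^2 \ge -\tilde h$. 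For the upper bound $\tilde h \le \tfrac12 r_2^2\,\fm$, compare $\tilde\phi - \tilde h$ with $Q_2(y) := \tfrac{\fm}{2}(|y-\xi_2|^2 - r_2^2)$ throughout the full section: now $\det\rD^2 Q_2 = \fm^n \ge \det\rD^2 \tilde\phi$ and $Q_2 \le 0 = \tilde\phi - \tilde h$ on the section boundary, so $\tilde\phi - \tilde h \ge Q_2$ inside, and evaluating at $y=T\xs$ gives $-\tilde h \ge -\tfrac12 r_2^2\,\fm$.

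For \eqref{est:T1}, the argument is purely geometric. Pick a unit vector $u_0$ attaining $\mat{T^{-1}}$; then $\xi_1 \pm r_1 u_0 \in \cB_{r_1}(\xi_1) \subset T\cS(v,h,\xs)$, so their $T^{-1}$-preimages lie in $\cS(v,h,\xs)$ and differ by a vector of length $|T^{-1}(2 r_1 u_0)| = 2 r_1 \mat{T^{-1}}$; hence $\diam\cS(v,h,\xs) \ge 2 r_1 \mat{T^{-1}}$.

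Finally, for \eqref{diam:up} I would eliminate $\mat{T^{-1}}$ in favour of $\mt{\br T^{-1}}$ via the scaling identity $\mat{T^{-1}} = (\detn T)^{-1}\,\mt{\br T^{-1}}$ and substitute the $\detn T$-bounds extracted from \eqref{est:det:A}. The left inequality uses $\diam\cS(v,h,\xs) \le \mat{T^{-1}}\,\diam(T\cS(v,h,\xs)) \le 2 r_2 \mat{T^{-1}}$ together with the lower bound $\detn T \ge r_1/\sqrt{2h}$; the right inequality uses the previous step in the form $\diam\cS(v,h,\xs) \ge 2 r_1 \mat{T^{-1}}$ together with the upper bound $\detn T \le r_2 \sqrt{\fm/(2h)}$. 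I anticipate no serious obstacle: the only delicate point is matching the radius and $\fm$-scaling in each quadratic barrier and keeping track of which direction the $\detn T$-inequalities go when trading it against $\sqrt h$.
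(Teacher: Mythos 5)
Your proposal is correct and follows essentially the same route as the paper: pull the tilted solution back through $\cF_T$, bound the transformed section height $(\detns T)\,h$ by comparison with the quadratic barriers $\tfrac12(|y-\xi_1|^2-r_1^2)$ and $\tfrac{\fm}{2}(|y-\xi_2|^2-r_2^2)$ (which is exactly what the paper means by ``following almost the same proof'' of \eqref{est:wm}), and then deduce \eqref{est:T1} by the stretching argument $2r_1\mat{T^{-1}}\le\diam\cS(v,h,\xs)\le 2r_2\mat{T^{-1}}$ and \eqref{diam:up} via $\mat{T^{-1}}=(\detn T)^{-1}\mt{\br T^{-1}}$. The only difference is that you write out the barrier comparisons explicitly where the paper cites Corollary \ref{cor:est:com}; the constants and directions of all inequalities check out.
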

\begin{proof} Define 
 \begin{equation}\label{affine:example}
 \hat v(Tz):=\cF_T\big(v(z)-\rD v(\xs)\cdot(z-\xs)-v(\xs)-h\big)\qqu{for} z\in \cS(v,h,\xs),
 \end{equation}
which, by affine invariance of MAE, satisfies 
 \begin{equation}\label{affine:example:eq}
 \det \rD^2\hat v=f\circ T^{-1} \qqu{{in}} T\cS(v,h,\xs) \qqu{with} \hat v\big|_{\pa(T\,\cS(v,h,\xs))}=0.
\end{equation}
Thus, $\inf_{T\cS(v,h,\xs)}\hat v=-(\det T)^{2\over n} h $.
 By  following almost the same proof of the minimum estimate \eqref{est:wm} from Corollary \ref{cor:est:com}, we find upper and lower bounds on $\inf_{T\cS(v,h,\xs)}\hat v$,  whence proving \eqref{est:det:A}.
 
 Next, applying $T^{-1}$ on 
the three sets in \eqref{qn:tr} will stretch $\cB_{r_2}(\xi_2)$ along any direction by a factor of at most $\mat{{T}^{-1}}$ and  stretch $\cB_{r_1}(\xi_1)$ along a certain direction by a factor of exactly $\mat{{T}^{-1}}$. Therefore 
\[
2r_1\mat{{T}^{-1}}\le \diam\big(\cS(v,h,\xs)\big)\le 2 r_2\mat{{T}^{-1}},
\]
proving \eqref{est:T1}. 
Since $\mat{{T}^{-1}}=\mt{\br{T}^{-1}}(\det T)^{-\frac1n}$, combining the above with \eqref{est:det:A}  proves \eqref{diam:up}.
\end{proof}
 

  \section{\bf Proof of Theorem \ref{mainresult}}
 \label{sec:mainproof}

   Recall    $1\le  f \le \fm$ in $\Om$. For a given $0<\rho<\frac12\diam(\Om)$,  recall the positive $\theta=\theta(n,\fm)<1$ from Lemma \ref{lem:strict}. Recall $\qin$ from \eqref{choose:first} with $\Cmc$ given in \eqref{def:Cmc}.
Define integer
$J:=\left\lceil{\ln(\rho/\qin)}/{\ln\big(\theta^{-1}\big)}\right\rceil$ which is non-negative due to $\qin\le\rho$ and $\theta<1$. Then,  Lemma \ref{lem:strict} together with $\theta^{J}\le\qin/\rho$ implies for constant $H=H(n,\fm,\rho)$,
  \begin{align}
\label{incl:first}  \cS(v,2^{-J}\!H,\xs)\subset \cB_{\qin}(\xs)\subset\Om,\qquad\forall\,\xs\in\Omrho.
\end{align}
We {\bf fix} this point $\xs$ from now on and define
\[
\text{constant \; }b:= f (\xs)\ge1.
\]

Let $A$ be the matrix of the affine transform that normalises $\cS(v,2^{-J}\!H,\xs)$. Define transform $\tau$ 
\begin{align}\label{def:Tin}
\Tin z&:=A(z-\xs),
\end{align}
which makes $\Tin\, \xs=0$ and $\Tini 0=\xs$. By commutation property \eqref{T:sn}, there exists constant $h_0>0$ (the value of which is irrelevant) 
so that
\begin{align}\label{firstdomain:prop}
&\Tin\,\cS(v,2^{-J}\!H,\xs)=\cScmd{\Vo,h_0},\\
\label{vv}\text{for}\qquad &\Vo(\Tin z):=b^{-1}\cF_\Tin \big(v(z)-\rD v(\xs)\cdot(z-\xs)\big),
\end{align}
and section $\cScmd{\Vo,h_0}$ is quasi-normalised modulo a translation. Just to be clear, we have
\be\label{f:Phi}
\detn \Vo=\Fo\qu{in}\cScmd{\Vo,h_0}
\qqu{with}\Fo:= b^{-1}f \circ\Tini\qu{so that}\Fo(\tau z)= b^{-1}f(z).
\ee Then,  assumptions \eqref{Om0:prop}, \eqref{wat} are validated. 

Also, assumption \eqref{de0:Cmc} is validated due to \eqref{choose:first}, \eqref{incl:first} and $ b\ge1$. 
 
In estimates \eqref{est:det:A} and \eqref{diam:up} of Lemma \ref{lem:sec:size}, let the role of $h$ be played by $2^{-J}\!H(n,\fm,\rho)$ with  $2^{J-1}<2^{{\ln(\rho/\qin)}/{\ln(\theta^{-1})}}=(\rho/\qin)^{{\ln2}/{\ln(\theta^{-1})}}\le 2^J$, so that combining it with \eqref{sc:A:A1},  \eqref{incl:first} gives,
\be\label{est:t:tau}
\left.\begin{aligned}& \text{all upper/lower bounds of   $\mt{\bTin}$, $\mt{\bTini}$ and $\detn\tau$}\\
&\text{can be expressed as $(n,\fm)$-dependent positive powers of}\\
 &\qin^{\,-1} \text{ times $(n,\fm,\rho)$-dependent positive constants.}
\end{aligned}\qquad
\right\}\ee
 
\noindent{\bf Convention}. Let  $\Qin$ denote various values that satisfy dependence \eqref{est:t:tau}, which is consistent with its definition in  Theorem \ref{mainresult}. 

A useful estimate: since by \eqref{incl:first}, \eqref{firstdomain:prop} and $\cScmd{\Vo,\hc}\subset\cScmd{\Vo,h_0}$, we have $\Tini\cScmd{\Vo,\hc}\subset\cB_{\qin}(\xs)$, and  since  \eqref{Om:Bpm} shows $\pp_1\cScmd{\Vo,\hc}=\pp_1\Tin(\Tini\cScmd{\Vo,\hc})\supset\cB_1$, we apply  \eqref{est:T1}  to obtain
\be\label{Tini:p1:qin}
\mat{\Tini\pp_1^{-1}}\le  \qin.
\ee

 Now consider $\xs,\xs'\in\Omrho$ with   
 \[
 \ds :=  |\xs-\xs'|>0.
 \]
  The $ \Qin\,\ds \ge1$ case of  \eqref{mainestimate:1}  follows directly from $\cC^1$ estimate \eqref{Lipbound} and maximum principle Lemma \ref{lem:max}. The $ \Qin\,\ds \ge1 $ case of \eqref{mainestimate} follows directly from the $\cC^2$ bound \eqref{C2:est}. 
 So it suffices to consider the case of $\Qin\,\ds  < 1$ from here on. By \eqref{c:e} and \eqref{est:t:tau}, we can always choose a suitable $\Qin$ in the condition $ \Qin\,\ds  < 1$ so that \(\ds <\tfrac12\,\mat{\pp_{1}\Tin}^{-1}\).  Also, \eqref{pk:ratio} implies $\big\{\mat{\pp_{k}\Tin}\big\}_{k\ge1}$ is strictly increasing. Therefore, we  obtain that
 \be\label{cond:rk}
 \tfrac12\,\mat{\pp_{k+1}\Tin}^{-1}\le  \ds <\tfrac12\,\mat{\pp_{k}\Tin}^{-1}\qu{ for unique integer}k\ge1,
 \ee
 and {\bf fix} this $k\ge1$ from now on.
 
 Next, we establish some useful estimates in terms of $\ds$. Using
 \[
1\le \mat{\Tini\pp_{k}^{-1}}\cdot \mat{\pp_{k}\Tin}=\mt{\bTini\breve\pp_{k}^{-1}}\cdot \mt{\breve\pp_{k}\bTin}\le \Qin M_{k}
 \]
where the upper bound was due to \eqref{c:e} and \eqref{est:t:tau}, we can obtain an upper bound for $\mat{\Tini\pp_{k}^{-1}}$ by considering \eqref{cond:rk}  and $ \mat{\pp_{k+1}\Tin}\le \tfrac76\mat{\pp_{k}\Tin}/ \sqrt\hc$ (due to \eqref{Tk10:mat}). Apparently, we can also use \eqref{cond:rk} to obtain a lower bound. In short,
\be
\label{Rr:ratio}
2\ds<\mat{\Tini\pp_{k}^{-1}}\le \Qin\, M_{k+1}\ds\,.
\ee
Also, combining \eqref{c:e}, \eqref{est:t:tau}  and \eqref{cond:rk} shows
\be\label{def:r:imply}
\detns\big(\Tini\pp_{k}^{-1}\big)=\dfrac{ \mt{\breve\pp_{k+1}\bTin}^2}{\hc\,\mat{\pp_{k+1}\Tin}^2}\le\Qin\, M_{k+1}\ds^{\,2}\,.
\ee

We move on to $\delta_k$ and $M_k$. Since \eqref{Om:Bpm} implies $\Tini\cScmd{\Vo,\hc^{k}}
\subset  \cB_{2\mat{\Tini\pp_{k}^{-1}}}$, we combine this with definition \eqref{dk:def} of $\delta_k$ and 
definition \eqref{f:Phi} of $\Fo$, noting bound \eqref{dk:d0:bound}, to have
\be\label{dk:bound}
\delta_k\le \min\Big\{\mc_{ f |_{\cB(\qin,\xs)}}\big(2\mat{\Tini\pp_{k}^{-1}}\big),\,\Cmc\Big\},\qquad\forall\,k\ge1.
\ee
Then, $\frac{\ln M_{2}}{1.12\,{C_5/ \hc}}\le \Cmc$,  and for $k\ge2$, we use monotonicity of $\wt\mc_ f $ and \eqref{pk:ratio} to find
\[
\begin{aligned}
\frac{\ln M_{k+1}}{1.12\,{C_5/ \hc}}\le \Cmc+\sum_{j=1}^{k-1}\delta_k
&\le \Cmc+\frac{-1}{\ln(\frac65\sqrt{\hc})}\int_{2\mat{\Tini\pp_{k-1}^{-1}}}^{C \mat{\Tini\pp_1^{-1}}}\min\Big\{\mc_{ f |_{\cB(\qin,\xs)}}(q),\,\Cmc\Big\}\,{\rd q\over q}\\
&\le \Cmc+\frac{-1}{\ln(\frac65\sqrt{\hc})}\Big(\int_{\qin}^{C\qin}{\Cmc}\,{\rd q\over q}+\int_{\bar C\ds}^{\qin}{{\mc_{ f }(q)}}\,{\rd q\over q}\Big),
\end{aligned}
\]
where $\bar C\ds$ was due to \eqref{Rr:ratio} and \eqref{pk:ratio}, and $C\qin$ was due to \eqref{Tini:p1:qin}. Then, 
by  defining 
\be\label{def:beta}
\beta:=1.12\,C_5/ \big(-\hc\ln(\tfrac65\sqrt{\hc})\big),
\ee
and recalling definition of $\cE_{\ds}$ from Theorem \ref{mainresult}, we prove
\be\label{Mk:bd}
M_{k+1}\le  C\cE_{\ds}
\ee
for constant $C$ that only depends on $n$. 
The above calculation naturally implies
\be\label{Dini:M:inf}
 f  \text{ being Dini continuous in $\cB_{\qin}(\xs)$\; implies \; }M_\infty=\lim_{k\to\infty}M_k\le C\cE_{0}<\infty.
\ee
 Also, in view of \eqref{Rr:ratio}, \eqref{dk:bound} and \eqref{Mk:bd}, we have
 \be\label{dk:final}
 \delta_k\le \mc_ f (\Qin \cE_{\ds}\, \ds).
 \ee
 
In the final stage of the proof, we introduce $\uu $ to be the ``backward transform''  of  $ w_{k}$ from domain $\cScmd{\Vo,\hc^{k}}$ onto   $\Tini\cScmd{\Vo,\hc^{k}}$ while also ``adding back" the information of the gradient of $v$,
\be\label{def:uk}
\uu (z)\big|_{\Tini\cScmd{\Vo,\hc^{k}}}:=\cF_{\Tini}\big(w_{k}\big|_{\cScmd{\Vo,\hc^{k}}}\big) \,+\,b^{-1}\rD v(\xs)\cdot(z-\xs). 
\ee 
Then, in view of \eqref{vv}, we have
 \be\label{uvw:rel}
 v-b\,\uu=b\,\cF_{\Tin^{-1}}(\Vo-w_{k}).
 \ee
 Recall definition $w_{k}=\textsf{Sol}(\cScmd{\Vo,\hc^k})$. By the quasi-normalisation \eqref{Om:Bpm} and estimate \eqref{dk:d0:bound} on $\delta_k$, the assumptions for Lemma \ref{Stepk} are verified with the roles of $u,w$ played according to \eqref{Stepk:roles} and \eqref{Stepk:roles:2}, so that we can invoke \eqref{vw:sn}\,-\,\eqref{gradest} followed by transforming the domain from $\pp_k\,\cScmd{\Vo,\hc^k}$ back to $\Tini\cScmd{\Vo,\hc^k}\supset\Tini\pp_{k}^{-1}\cB_1$ (which should be combined with $\cScmd{\Vo, \hc^k/2}\supset\frac12\cScmd{\Vo, \hc^k}$ by convexity of $v$) and applying transform estimate \eqref{T:Dk:al} or \eqref{T:Dk:al:rs} together with using \eqref{cond:rk} to relax factors of $\mat{\pp_{k}\Tin}$ and using \eqref{def:r:imply}, \eqref{Mk:bd} to relax factors of $\detns(\Tini\pp_{k}^{-1})$, to obtain
\begin{align}\label{back2Thm:0}
| v -b\,\uu-const|& \le  \Qin \,\cE_{\ds}\,\ds^{\,2}\delta_k \qqu{in}\cB_{2\ds}(\xs),\\
\label{back2Thm:2} (\Qin\, \cE_{\ds})^{-1}I\le \rD^2\uu&\le \Qin\, \cE_{\ds} \,I \;\,\quad\qqu{in}\cB_{\ds}  (\xs),\\
\label{back2Thm:25}|\rD^2\uu|_{\cC^{1/2}}&\le \Qin \,\cE_{\ds}\,\ds^{\,-\frac12}\,\qqu{in}\cB_{\ds} (\xs),\\
\label{back2Thm:1}|\rD v(\xs)-b\,\rD \uu(\xs)|&\le \Qin \,\cE_{\ds}\,\ds\,\delta_k^{\frac12}\,,
\end{align}
where in the last estimate we also used $\rD \Vo(0)=\vec 0$.


Define $v'$ in the same fashion as \eqref{firstdomain:prop} but in a neighbourhood of $\xs'$, and define $\uup$ in the same fashion as \eqref{def:uk} but in terms of $v'$, $\xs'$ and constant $b'= f (\xs')$. 
Let
\[
\xm:=\tfrac12 (\xs+\xs').
\] 

In the rest of \S \ref{sec:mainproof}, we will use  transform estimates \eqref{T:Dk:al:rs} without referencing.

\subsection{Second derivative estimate}\label{D2:proof}Suppose $ f $ is Dini continuous in $\cB_{\qin}(\xs)$ and $\cB_{\qin}(\xs')$.

Split the difference in the values of $\rD^2 v$ at $\xs$, $\xs'$
\begin{align*}
\big|\rD^2 v(\xs)-\rD^2 v(\xs')\big|
&\le \Big(\big|\rD^2 v(\xs)-b\,\rD^2 \uu(\xs) \big| +b\big|\rD^2 \uu(\xs)-\rD^2 \uu(\xm) \big|\Big)\\
&\quad\;\;
+\big(\text{counterparts in terms of } \xs', \uup, b'\;\big)\\
&\quad\;\;+\Big|b\,\rD^2\uu(\xm)-b'\,\rD^2\uup(\xm)\Big|,
\end{align*}
so that, by symmetry and without loss of generality, 
\be\label{bridge:est}
\begin{aligned}
 \big|\rD^2 v(\xs)-\rD^2 v(\xs')\big|&\le 2\Big|\rD^2 v(\xs)-b\,\rD^2 \uu(\xs) \Big| 
 + \fm\, \ds^{ }\, 
 \big|\rD^3\uu\big|_{\cC^0\left(\cB( \frac12\ds  ,\,\xs)\right)}\\
& \quad +\Big|b\,\rD^2\uu(\xm)-b'\,\rD^2\uup(\xm)\Big|.
\end{aligned}
\ee

We start with estimating the last term.  By definition of $\xm$, we have  $\cB_{\ds/2  } (\xm)\subset \cB_{\ds } (\xs)$ and  $\cB_{ \ds /2 } (\xm)\subset \cB_{\ds } (\xs')$.
 By symmetry and without loss of generality, we can have $\uup$ satisfy  \eqref{back2Thm:0}\,--\,\eqref{back2Thm:25} in the same domain $\cB_{\ds /2} (\xm)$ except that  $b$ should be replaced by $b'$. Then, by \eqref{back2Thm:0},
\be\label{back2Thm:0:ww}
| b\,\uu-b' \uup -const|  \le  \Qin \,\cE_{\ds}\,\ds^{\,2}\delta_k\qqu{in}\cB_{\ds /2}(\xm)\,.
\ee
By $b= f (\xs)$, $b'= f (\xs')$, we have $|b-b'|\le \mc_ f (\ds)$.
 Therefore, by invoking Lemma \ref{lem:ol} with the role of $\delta$ played by $ \Qin \,\cE_{\ds}\,\ds^{\,2}\delta_k$ and $r,r_0$ played by $\frac12\ds,\frac14\ds$, and combining the result with \eqref{bw:ww}, \eqref{dk:final}, \eqref{back2Thm:2}, \eqref{back2Thm:25}, we show
 \be\label{o:est:2}
 \big|b\,\rD^2 \uu-b'\,\rD^2\uup\big|\le  \Qin \, \cE_{\ds}^{\beta_1}\mc_ f (\Qin \cE_{\ds}\, \ds)\qqu{in}\cB_{\ds /4} (\xm).
\ee 

To estimate the second last term of \eqref{bridge:est},  we combine \eqref{est3:Tww1:int}, \eqref{est:t:tau} and \eqref{Mk:bd} to find
\[\begin{aligned}
\Big|\rD^3\big(\cF_{\Tini}(w_{j+1}-w_{j})\big)\Big|&\le  \Qin\, \cE_{\ds}^{1.5}\,\delta_{j}\big/\detn(\Tini\pp_{j}^{-1})\le  \Qin\, \cE_{\ds}^{2}\,\delta_{j}\big/\mat{\Tini\pp_{j}^{-1}}\\
&\qqu{in}\Tini \pp_{j+1}^{-1}\cB_{1/4},\qquad\forall\,j\in[1,k-1].
\end{aligned} \]
Since \eqref{cond:rk} implies $\cB_{\ds/2}(\xs)\subset \Tin^{-1}\pp_{j+1}^{-1}\cB_{1/4}$ for all $j\in[1,k-1]$, summing the above estimates  and also using \eqref{Pog0:3ii}  to bound $\big|\rD^3(\cF_{\Tini}w_1)\big|$ in $\Tini \pp_{1}^{-1}\cB_{1/4}$, we find
 \[\begin{aligned}
 &\big|\rD^3 \uu\big|_{\cC^0(\cB_{\ds/2}(\xs))} = \big|\rD^3 \big(\cF_{\Tini}w_k\big)\big|_{\cC^0(\cB_{\ds/2}(\xs))} \le \Qin +\Qin\,  \cE_{\ds}^{2}\,\sum_{j=1}^{k-1}\delta_{j}\,\big/\,\mat{\Tini\pp_{j}^{-1}}\,.
 \end{aligned}
 \]
 Then, by a similar argument to the one between \eqref{dk:bound} and \eqref{def:beta}, we show
 \be\label{l2:t2}
\big|\rD^3 \uu\big|_{\cC^0(\cB_{\ds/2}(\xs))} 
\le \Qin + \Qin  \, \cE_{\ds}^{2}\,\int_{\bar C\ds}^{\qin}{\mc_ f (q)}\,\frac{\rd q}{q^2}\,.
\ee

The first term on the right-hand side of \eqref{bridge:est} and also $|\rD^2 v(\xs)|$  are estimated as follows. By \eqref{Dini:M:inf}, we verify assumption \eqref{Dini:alt:as} of Lemma \ref{lem:Dini}. Thus, \eqref{Dini:cons:1} and \eqref{Dini:cons:2} follows, the latter of which together with  \eqref{vv}, \eqref{Dini:M:inf} shows $|\rD^2 v(\xs)|\le \Qin\,\cE_0$ in terms of $v$, proving the $\cC^2$ bound \eqref{C2:est}.
By factoring out $M_{k+1}$ on the right-hand side of \eqref{Dini:cons:1} and estimating the remaining factor using simply inequality $e^z-1\le e^zz$ for $z\ge0$, we have 
\[
\big|{\rD^2 V(0)}-\rD^2w_k(0)\big|<C\,M_\infty\,\sum_{j=k}^{\infty}\delta_{j}\,.
\] The sum can be estimated  using a similar argument to the one between \eqref{dk:bound} and \eqref{Mk:bd}, except that the limits of the resulting integral are $0$ and $C\mat{\Tini\pp_{k}^{-1}}$. This then prompts us to bound $\mat{\Tini\pp_{k}^{-1}}$ from {\it above} using \eqref{Rr:ratio} and \eqref{Mk:bd}. Also recall  \eqref{uvw:rel} and estimates \eqref{est:t:tau}  on $\tau$. Therefore,
 \[
 \big|{\rD^2 v(\xs)}-b\,\rD^2\uu(\xs)\big|<\Qin\,  \cE_{0}\,\int_0^{\Qin\, \cE_{\ds}\,\ds} {\mc_ f (q)}\,\frac{\rd q}{q}.
 \]

Combining this with \eqref{bridge:est}, \eqref{o:est:2}, \eqref{l2:t2} and \eqref{Mk:bd}, we prove  \eqref{mainestimate}.  

\subsection{First derivative estimate}
Split the difference in the values of $\rD v$ at $\xs$, $\xs'$ and reach the corresponding version of \eqref{bridge:est} where each $\rD^2$ is replaced by $\rD$ and the one $\rD^3$ is replaced by $\rD^2$. In the resulting sum of three terms, the first term is bounded in 
\eqref{back2Thm:1} with $\delta_k$ further bounded in \eqref{dk:d0:bound}, the second terms is bounded using \eqref{back2Thm:2}.
For the third term, since \eqref{back2Thm:0:ww} implies that $ \big|b\,\rD \uu-b'\,\rD\uup\big|$ evaluated at certain point in $\cB_{ \ds /4}(\xm)$ must be bounded by $\Qin\,\cE_{\ds}\,\delta_k\,\ds$, we use \eqref{dk:final} and \eqref{o:est:2} to obtain  
\[
 \big|b\,\rD \uu(\xm)-b'\,\rD\uup(\xm)\big|\le  \Qin\,\cE_{\ds}^{\beta_1}\mc_ f (\Qin\,\cE_{\ds}\, \ds )\,\ds\,.
\] 
Thus, we prove \eqref{mainestimate:1}.

\subsection{Replacing $\mc_ f $ with 
$\mc_{ f |_{\Om_{\rho-\qin}}}$} Observe that almost every $\mc_ f $ in \S \ref{sec:mainproof}  ultimately originates from estimate \eqref{dk:bound} of $\delta_k$. In there, the set $\cB_{\qin}(\xs)\subset \Om_{\rho-\qin}$ and hence $\mc_{ f |_{\cB_{\qin}(\xs)}}(q)\le \mc_{ f |_{\Om_{\rho-\qin}}}(q)$. The only exception is that $\mc_ f $ appears below \eqref{back2Thm:0:ww} in the form of $| f (\xs)- f (\xs')|\le \mc_ f (\ds)$. But $\xs,\xs'\in\Omrho$. Thus, we validate the last statement of Theorem \ref{mainresult}.

\subsection{On the Exponent $\beta$}\label{ss:beta}

This discussion is inspired by \cite{LC5} and \cite{XJW2}.

Start with $\cScmd{\Vo,h_0}$ in \S \ref{sec:iterativeproof}. By \eqref{n:d:2} and the remarks following that, there exist sufficiently small constants $\Cmc',h_{00}$ so that if $|f-f(0)|\le \Cmc'$ in  $\cScmd{\Vo,h_0}$, then $T_0\,\cS(\Vo,h_{00})=\cS(\cF_{T_0}\Vo,1)$ will be sufficiently close to $\cB_{\sqrt{2}}$ so that in the next step of iteration, the upper bound on $|\rD^3w_1|$ can be set as a sufficiently smaller value $C_3'$. This value is so small that we can rely on \eqref{n:d:2}, but with $C_3$ replaced by $C_3'$, to ensure any choice of $\hc'\in[h_{00},\frac12]$ in further iterations can still make $T_1T_0\,\cS(\Vo,h_{00}\hc')=\cS(\cF_{T_1T_0}v,1)$ sufficiently close to $\cB_{\sqrt2}$, and thus keep the iteration going with sequence $\big\{\cS\big(\Vo,h_{00}(\hc')^k\big)\big\}$ while still using the smaller value $C_3'$ as the upper bound for $|\rD^3w_k|$. 

Recall definition \eqref{def:beta} of $\beta$ with $\hc$ replaced by $\hc'$ now. Constants $1.12$ and $\frac65$ both have their origin in \eqref{Tk10:mat}, which shows that they can be made very close to 1 as $\delta_k$ tends to 0 or to a very small $(C_5,\hc')$-dependent value. Then, the $\hc'$-dependent part of \eqref{def:beta} reaches minimum at about $\hc'\approx e^{-1}$. Thus, the $\beta$ value can be made sufficiently close to $2eC_5$. Finally, constant $C_5$ originates from the proof of Lemma \ref{Stepk}; its value can be estimated following \cite[\S\,3.2]{XJW2}.

\appendix

\section{\bf Proof Corollary \ref{cor:1:3}}\label{app:cor:1:3}
\begin{proof}   For existence and uniqueness of the convex Alexandrov solution $v\in \cC^0(\overline\Om)$, we refer to \cite[Thm. 2.13]{AFB2} which does {\it not} requiring {strict} or uniform convexity of $\Om$. 
  
Define 
\[
 f_{\textnormal{ext}}(x):=\begin{cases}
f(x)&\qquad\text{if }\,x\in\Om,\\
1&\qquad\text{if }\,x\in\bR^n\backslash\Om, \end{cases}
\] and define,   for integers $i\gg1$,
  \[  
  f_i(x):=  \int_{\cB_{1/i}} f_{\textnormal{ext}}(x-y)\,\phi (iy)\,i\,\rd y,\qquad\forall\,x \in\Om, 
  \]
  where smooth function $\phi$ is non-negative, supported on the unit ball and integrates to unit. Then, $f_i$ is smooth in $\overline\Om$ and satisfies
 \be\label{cond:fi:1}
1\le f_i(x)\le \fm:=|f|_{L^\infty(\Om)}\qqu{in}\Om.
\ee
Further, since  $f_{\textnormal{ext}}(x-y)=f(x-y)$ if $x\in\Om_{\rho/2} $, $|y|<1/i$ and $i>2/\rho$, we have 
 \be\label{cond:fi:2}
 \mc_{f_i|_{\Om_{\rho/2}}}(q)\le\mc_{f}(q)\,,\quad\qquad\forall\,i>2/\rho.
\ee 
Consider a sequence of MAEs,
   \[\begin{aligned}
    \detn \mathrm{D}^2v_i &= f_i \qu{    in    }\Om, \\
      v_i &=0 \qu{    on    }\pa \Om .
 \end{aligned}
  \]
   The existence and uniqueness of Alexandrov solution $v_i\in \cC^0(\overline\Om)$ has been addressed at the start of the proof. The strict convexity of $v_i$ is due to \eqref{mm:strict} and thus $v_i\in\cC^\infty(\Om)$ by \cite[Thm. 3.10]{AFB2} (we quote this result because it does not require assumptions on $\pa\Om$). 

Now fix any $\rho\in\big(0,\frac12\diam(\Om)\big)$. By assumptions $\om_f(0+)<\min\big\{\Cmc,1/\beta_2\big\}$ and \eqref{cond:g},  the set 
\[
\Big\{z\in(0,\tfrac12\rho]\;\Big|\;\mc_f(z)\le\Cmc\text{ \, and \, }\ \beta_2\,\mc_f(q)< \gamma\text{ for all }q\in(0,z)\Big\}
\]
 is non-empty, thus has a supremum and we  choose it as $\qin$.  Then, by \eqref{cond:fi:2} and $\qin\le\frac12\rho$,  we have $\mc_{f_i|_{\Om_{\rho-\qin}}}\le  \mc_{f_i|_{\Om_{\rho/2}}}(q)\le \mc_f(q)$ and hence having \eqref{choose:first}, upon replacing $f$ with $f_i$, validated for all $i>2/\rho$. By Theorem \ref{mainresult} and noting the last statement therein,  we prove \eqref{mainestimate:1} in terms of the same $\qin,\Qin$ and $\om_{f}(q)$ for every $x,x'\in\Om_\rho$ and every  $(v_i,f_i)$.  We can also relax the explicit term $\mc_{f_i}$ in \eqref{mainestimate:1}  to a constant $(\fm-1)$ due to \eqref{cond:fi:1}, followed by relaxing both exponents of $\cE_{\ds}$ to $\max\{1,\beta_1\}=\beta_2/\beta$. In short,  with $\ds=|x-x'|>0$, we show
\be\label{mainestimate:11}
\dfrac{\big|\rD v_i(x)-\rD v_i (x')\big|}\ds
\le
\begin{cases}    
\Qin \,\cE_{\ds}^{\beta_2/\beta}&\text{ \; if  \; }  \Qin\,\ds < 1,\\
\Qin &\text{ \; if \; } \Qin\,\ds\ge1,
\end{cases}\qquad\forall\,x,x'\in\Om_\rho,
\ee where  the same $(n,\fm,\rho)$-dependent values of constants apply to all $i>2/\rho$. 

Note that, by the above choice of $\qin$, we have
\be\label{ds:g:E}
\ds^{\,\gamma}   \cE_{\ds}^{\beta_2/\beta}\le (\qin/\bar C)^{\gamma}\exp\Big( \int_{ \bar C\ds}^{\qin}\left(\beta_2\,\mc_f(q)-\gamma\right)\,\frac{\rd q}{q}\Big)\le  (\qin/\bar C)^{\gamma}\qqu{if}\bar C\ds\le\qin.
\ee  
Therefore, in view of \eqref{mainestimate:11}, we prove that, for all $i>2/\rho$,
\be\label{mainestimate:1g}
\dfrac{\big|\rD v_i(x)-\rD v_i (x')\big|}{\ds^{1-\gamma}}\le \text{an $(n,\fm,\rho,\om_f,\gamma)$-dependent constant}.
\ee

On the other hand, combining the global H\"older estimate of Corollary \ref{cor:sep} and the Arz\'ela-Ascoli lemma, we  can find a subsequence, also named $\{v_i\}$ for simplicity, so that
\be\label{v:inf}
\lim_{i\to\infty}\textstyle\max_{\,\overline\Om}|v_i-v_\infty|=0\qquad\text{for a convex function $v_\infty$}.
\ee
Moreover, by  a standard stability result (e.g., \cite[Lem. 1.2.3]{CG1}, \cite[Prop. 2.6]{AFB2}), we have the weak-* convergence of the sequence of Monge-Amp\`ere measures $\big\{\!\det \mathrm D^2v_i\rd x\big\}$  to a limit Monge-Amp\`ere measure $\cM v_\infty$ in the domain $\Om$. Now, by standard functional analysis, we have  strong convergence $\|f_i-f\|_{L^p(\Om)}\to0$ for any $p\in[1,\infty)$ which in turn implies the weak-* convergence of $\big\{\!\det \mathrm D^2v_i\rd x=f_i^n\rd x\big\}$ to $f^n\rd x$ as $i\to\infty$ in the domain $\Om$. By uniqueness of weak-* limit, we prove  
\(
\cM v_\infty=f^n\rd x.
\)
Also, apparently $v_i\big|_{\pa\Om}=0$ implies $v_\infty\big|_{\pa\Om}=0$. Therefore, we show that $v=v_\infty$ is the   convex Alexandrov solution  to \eqref{cM:v}.

Finally, since \eqref{mainestimate:1g} apparently extends to $\overline\Omrho$, we use Arz\'ela-Ascoli lemma to find a subsequence of $\big\{\rD v_i \big\}$ converge uniformly on $\overline\Omrho$ to certain limit as $i\to\infty$. By \eqref{v:inf}, this limit has to be $\rD v_\infty$. Therefore, we prove $\rD v_\infty$ satisfies the same  the H\"older estimate \eqref{mainestimate:1g} and hence $v=v_\infty$ is in $\cC^{1,1-\gamma}(\overline\Omrho)$. By the arbitrariness of $\rho$, we complete the proof of $v\in \cC^{1,1-\gamma}_{\textnormal{loc}}({\Om})$ for all $\gamma$'s satisfying \eqref{cond:g}. 
If further $\int_{0}^{1}\left(\mc_f(q)-\om_f(0+)\right)\,\frac{\rd q}{q}<\infty$ holds, then we choose $\qin$ so that $\mc_f(\qin)\le\Cmc$. The middle expression of \eqref{ds:g:E} with
 $\gamma=\beta_2\,\om_f(0+)$ is bounded due to that extra integrability condition, and the same argument works to show  $v\in \cC^{1,1-\gamma}_{\textnormal{loc}}({\Om})$.  
\end{proof}

{\bf Example 1}. Sketch proof. Let $\sigma_\ep:=\sigma_0+\ep$  for a fixed $\ep>0$. By definition of $\sigma_0$, the set 
\[
\Big\{z\in(0,\tfrac12\rho]\;\Big|\; \mc_f(z)\le\Cmc\text{ \, and \, }\beta\,\mc_f(q)\,|\ln q|<\sigma_\ep,\;\forall\,q\in(0,z)\Big\}
\]
 is non-empty, thus has a supremum and choose it as $\qin$. Then, 
\[
\cE_{\ds}\le O(|\ln\ds|^{\sigma_\ep})\qqua{and thus}\cE_{\ds}^{\beta_1}\mc_f(\Qin \, \cE_{\ds}\,\ds ) \le O\big(|\ln\ds|^{\sigma_\ep\beta_1-1}\big).
\]

{\bf Example 2}: Sketch proof. The trick to control the second integral in \eqref{mainestimate} is to find  $\qin<1$ satisfying $a\le\frac12|\ln \qin|$. Then, upon integrating by parts,
\[
\int_{\bar C\ds}^{\qin}{|\ln q|^{-a}}\,\frac{\rd q}{q^2}=-\frac{|\ln q|^{-a}}{q}\Big|_{\bar C\ds}^{\qin}+\int_{\bar C\ds}^{\qin}{a|\ln q|^{-a-1}}\,\frac{\rd q}{q^2}\,,
\]
we find the integral on the right-hand side does not exceed $\frac12$ times the left-hand side.


\end{document}